    \renewenvironment{proof}[1][\proofname]{%
      \par\pushQED{\qed}\normalfont%
      \topsep6\p@\@plus6\p@\relax
      \trivlist\item[\hskip\labelsep\bfseries#1\@addpunct{.}]%
      \ignorespaces
    }{%
      \popQED\endtrivlist\@endpefalse
    }
\newcommand{\dist}{\operatorname{dist}}
\newcommand{\dv}{\operatorname{div}}
\newcommand{\op}{\operatorname}
\newcommand{\IR}{\mathbb{R}}
\newcommand{\IN}{\mathbb{N}}
\newcommand{\tr}{\operatorname{tr}}
\newcommand{\dd}{\, \mathrm{d}}
\newcommand{\supp}{\operatorname{supp}}
\newcommand{\eps}{\varepsilon}
\newtheorem{theorem}{Theorem}[section]
\newtheorem{lemma}[theorem]{Lemma}
\newtheorem{proposition}[theorem]{Proposition}
\theoremstyle{definition}
\newtheorem{remark}[theorem]{Remark}
\begin{document}

\begin{center}
{\Large The inertialess limit of particle sedimentation modeled by the Vlasov-Stokes equations}

\bigskip

Richard M. H\"{o}fer\footnote{University of Bonn, Institute For Applied Mathematics. Endenicher Allee 60, 53115 Bonn, Germany. 
\newline Email: hoefer@iam.uni-bonn.de, Phone: +49 228 735602}

\vspace{2mm}

\today

\end{center}

\begin{abstract}
We study the Vlasov-Stokes equations which macroscopically model the sedimentation of a cloud of particles in a fluid,
where particle inertia are taken into account but fluid inertia are assumed to be negligible.
We consider the limit when the inertia of the particles tends to zero, and obtain convergence of the 
dynamics to the solution of an associated inertialess system of equations. This system coincides with the model that can be derived
as the homogenization limit of the microscopic inertialess dynamics.
\end{abstract}

\section{Introduction}
We consider the sedimentation of a cloud of identical spherical particles suspended in a fluid subject to gravitation.
It is assumed that the suspension is sufficiently dilute such that collisions of particles do not play a role.
Furthermore, we neglect inertial forces of the fluid, i.e., the fluid is modeled by a Stokes equation, but particle inertia are taken into account.
These assumptions are justified if the Reynolds number is much smaller than the Stokes numbers which is the case for very small particles in gases. We refer to \cite{Koc90} for the details
of the microscopic model and a discussion about the regime of validity.

Let a nonnegative function $f(t,x,v)$ describe the number density of particles at time $t$ and position $x \in \IR^3$ with velocity $v \in \IR^3$.
We denote the position density and current by
\begin{align}
	\rho(t,x) &:= \int_{\IR^3} f(t,x,v) \dd v,  \label{eq:defRho}\\
	j(t,x) := \rho(t,x) \bar{V}(t,x) &:= \int_{\IR^3} f(t,x,v) v \dd v. \label{eq:defJVbar}
\end{align}
Here, the mean velocity $\bar{V}$ is defined to be zero in the set $\{\rho = 0\}$.
As a model for the macroscopic dynamics, we consider the so-called Vlasov-Stokes equations, a Vlasov equation for the particles coupled with Brinkman equations for the fluid,
\begin{equation}
		\label{eq:VlasovStokes0}
\begin{aligned}
	\partial_t f + v \cdot \nabla_x f + \lambda \dv_v \left(\hat{g} f + \frac{9}{2} \gamma  (u-v) f  \right) &= 0, \qquad f(0,\cdot,\cdot) = f_0, \\
	- \Delta u + \nabla p + 6 \pi \gamma \rho(u-\bar V) &= 0, \qquad \dv u = 0.
\end{aligned}
\end{equation}
Here, $u$ and $p$ are the fluid velocity and pressure respectively, $\hat{g} = g/|g|$ with $g$ being the gravitational acceleration, and
$\lambda$ and $\gamma$ are constants that will be discussed below. 
The first equation expresses that the forces acting on the particles are the gravitation and the drag exerted by the fluid.
The Brinkman equations are Stokes equations with a force term that arises from the same drag.

A rigorous derivation of these macroscopic equations from the microscopic dynamics has not been achieved yet, a formal derivation can be found in \cite{Koc90}. 
In the quasi-static case, the Brinkman equations have been established in
\cite{Al90a}, \cite{DGR08}. 
Using this, the Vlasov-Stokes equations \eqref{eq:VlasovStokes0} can be formally derived from the microscopic dynamics after non-dimensionalizing.
The constants $\lambda$ and $\gamma$ are given
by
\begin{align}
	\lambda = \frac{\mu^2 }{\rho_p (\rho_p - \rho_f) \phi^2 |g| L^3}, \qquad
	\gamma = \frac{\phi L^2}{R^2},
\end{align}
where $\mu$ is the fluid viscosity, $\rho_p$ and $\rho_f$ are the particle and fluid mass density respectively,
$\phi$ is the volume fraction of the particles, $L$ is the diameter of the cloud of particles, and $R$ the radius of the particles.
The constant $\gamma$ determines the interaction strength between fluid and particles. The quantity $(\lambda \gamma)^{-1}$ is known as the Stokes number
and determines the strength of the inertial forces.
For definiteness, we assume $\rho_p > \rho_f$ such that $\lambda >0$. Then, the larger $\lambda$, the less important
inertial effects become. For a more detailed discussion of these parameters as well as a formal derivation of the system \eqref{eq:VlasovStokes0},
we refer to \cite{Hof16}.

For similar equations as \eqref{eq:VlasovStokes0}, global well-posedness has been proven in \cite{Ham98} and \cite{BDGM09}.
In \cite{Jab00}, the author considers the inertialess limit of the system, where the fluid velocity $u$ in \eqref{eq:Vlasov Stokes} is replaced by
a force term $F[\rho,j]$ that is given by a convolution operator which is more regular than the Stokes convolution operator.
In \cite{Gou01}, similar limits are studied for a one dimensional model without gravity and including inertial forces on the fluid.
In \cite{GP04}, the authors consider limits of high and low inertia of the system of a Vlasov equation without gravity and with a given random fluid velocity field.
Similar systems that include Brownian motion of the particles and their limits have been studied among others in \cite{CP83}, 
\cite{GJV04a}, \cite{GJV04b} \cite{CG06}, and \cite{GHMZ10}.

\subsection{Main result}

We are interested in the limit $\lambda \to \infty$, which corresponds to inertialess particles.
For the ease of notation we drop all the other constants and consider the system
\begin{equation}
	\label{eq:Vlasov Stokes}
\begin{aligned}
	\partial_t f + v \cdot \nabla_x f + \lambda \dv_v \left(g f +   (u-v) f  \right) &= 0, \qquad f(0,\cdot,\cdot) = f_0, \\
	- \Delta u + \nabla p + \rho(u-\bar V) &= 0, \qquad \dv u = 0.
\end{aligned}
\end{equation}

For inertialess particles, the following macroscopic equation has been proven in \cite{Hof16} to be the homogenization limit of
many small particles.
\begin{equation}
	\label{eq:limitEquation}
\begin{aligned}
	 \partial_t \rho_\ast + \left( g + u_\ast \right) \cdot \nabla \rho_\ast &= 0, 
	 \qquad \rho_\ast(0,\cdot) = \rho_0 := \int_{\IR^3} f_0 \dd v,\\
	 - \Delta u_\ast + \nabla p& =   {g} \rho_\ast, \qquad \dv u_\ast = 0 .
\end{aligned}
\end{equation}
Moreover, well-posedness of this system has been proven in \cite{Hof16}.

In these equations, particles are described by their position density $\rho_\ast$  only, because their velocity is 
the sum of the fluid velocity $u$ and the constant $g$ which is the direct effect due to gravitation. 

The main result of this paper is the following theorem.
\begin{theorem}
	\label{th:main}
	Assume $f_0 \in W^{1,\infty}(\IR^3\times\IR^3)$ is compactly supported. Then, for $\lambda > 0$, there exists
	a unique solution $(f_\lambda,u_\lambda)$ to \eqref{eq:Vlasov Stokes}.
	Let $(\rho_\ast,u_\ast)$ be the unique solution to \eqref{eq:limitEquation}. 
	Then, for all $T > 0$, and all $\alpha < 1$
	\begin{align}
		\rho_\lambda &\to \rho_\ast \quad \text{in} ~ C^{0,\alpha}((0,T) \times \IR^3), \\
		u_\lambda &\to u_\ast \quad  \text{in} ~ L^\infty((t,T) ; W^{1,\infty}(\IR^3)) ~  \text{and in} ~ L^1((0,T) ; W^{1,\infty}(\IR^3)).
	\end{align}
\end{theorem}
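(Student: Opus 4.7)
The plan is to prove well-posedness of \eqref{eq:Vlasov Stokes} for each fixed $\lambda > 0$ by a fixed-point argument on the velocity field $u$, and then to pass to the limit $\lambda \to \infty$ using the fact that the Vlasov equation forces particle velocities to relax exponentially, with rate $\lambda$, to the effective drift $g + u_\lambda(t,x)$. For the well-posedness step, given a candidate $u$ in a ball of $L^\infty_t W^{1,\infty}_x$, the characteristics
\[
\dot X = V, \qquad \dot V = \lambda\bigl(g + u(t,X) - V\bigr)
\]
are well defined; one pushes $f_0$ along them to obtain $f_\lambda$, computes $\rho_\lambda$ and $\bar V_\lambda$, and solves the Brinkman equation to recover a new $u$. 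Stokes regularity, combined with compact support of $f_\lambda$ in $(x,v)$ (which propagates because $\lambda(g+u-v)$ drives $v$ towards $u+g$) and propagation of the $W^{1,\infty}$ regularity of $f_0$, should make this map a contraction and yield a unique global-in-time solution.

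For the limit, setting $W := V - g - u_\lambda(t,X)$ and using $\dot W = -\lambda W - (\partial_t u_\lambda + V\cdot\nabla u_\lambda)(t,X)$, the integrating factor $e^{\lambda t}$ gives
\[
V(t) - g - u_\lambda(t,X(t)) = e^{-\lambda t}\bigl(V_0 - g - u_\lambda(0,X_0)\bigr) - \int_0^t e^{-\lambda(t-s)}\bigl(\partial_t u_\lambda + V\cdot\nabla u_\lambda\bigr)(s,X(s))\dd s.
\]
Assuming uniform-in-$\lambda$ control of $\partial_t u_\lambda$ and $\nabla u_\lambda$, this yields $|V - g - u_\lambda(t,X)| \lesssim e^{-\lambda t} + 1/\lambda$ along every characteristic issuing from $\supp f_0$. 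Integrating in $v$ against $f_\lambda$,
\[
j_\lambda - \rho_\lambda(g + u_\lambda) = \int_{\IR^3} f_\lambda\,\bigl(v - g - u_\lambda\bigr)\dd v = O(e^{-\lambda t} + 1/\lambda)
\]
in $L^\infty$, so the continuity equation $\partial_t \rho_\lambda + \dv j_\lambda = 0$ becomes a transport equation for $\rho_\lambda$ with drift $g + u_\lambda$ and a source vanishing as $\lambda \to \infty$.

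The last step is compactness and identification of the limit. Uniform Lipschitz bounds on $g + u_\lambda$ yield Hölder regularity of the push-forward, hence uniform $C^{0,\alpha}$ bounds on $\rho_\lambda$; Stokes regularity then gives uniform $W^{1,\infty}$ bounds on $u_\lambda$ away from the initial layer. Arzelà-Ascoli extracts a subsequential limit, which by the perturbed transport equation above and the continuity of the Stokes solution map in $\rho$ solves \eqref{eq:limitEquation}; the uniqueness result of \cite{Hof16} upgrades this to convergence of the full family.

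The principal obstacle is the initial boundary layer: initial velocities in $\supp f_0$ are of order one and not of the form $g + u_\lambda(0,x)$, so $|V - g - u_\lambda(t,X)|$ is not small uniformly as $t \to 0$. This precisely accounts for the statement that $u_\lambda \to u_\ast$ holds in $L^\infty_t W^{1,\infty}_x$ only on $(t,T)$ with $t > 0$, while the $L^1_t$ convergence survives down to $0$ because $\int_0^T e^{-\lambda t}\dd t = O(1/\lambda)$. A secondary challenge is closing the bootstrap for the uniform bounds on $\|u_\lambda\|_{W^{1,\infty}}$, $\|\partial_t u_\lambda\|_{L^\infty}$, and the support of $f_\lambda$ in $v$, since the factor $\lambda$ in the Vlasov equation could a priori amplify all these quantities; the relaxation mechanism itself must be used self-consistently to prevent this amplification.
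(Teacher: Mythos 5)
Your high-level picture is right: well-posedness by characteristics plus a fixed point, exponential relaxation of $V$ to $g+u_\lambda(t,X)$ with rate $\lambda$, the resulting perturbed transport equation for $\rho_\lambda$, and the boundary layer at $t=0$ explaining why $u_\lambda\to u_\ast$ is only uniform away from $0$ but holds in $L^1_t$. This matches the paper's strategy in outline. However, what you call the ``secondary challenge'' --- closing the bootstrap for the uniform-in-$\lambda$ bounds on $\|u_\lambda\|_{W^{1,\infty}}$, $\|\partial_t u_\lambda\|_{L^\infty}$, $\|\rho_\lambda\|_{L^\infty}$ and $\|\nabla\rho_\lambda\|_{L^\infty}$ --- is in fact the central difficulty, and your proposal does not supply the tools to resolve it. The energy identity \eqref{eq:kineticEnergyParticles} has an explicit factor $\lambda$ in front, so any naive Gronwall-type argument gives bounds growing like a power of $\lambda t$, exactly as in Lemma \ref{lem:aPrioriWellPosedness}. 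The paper closes the loop by two distinct ideas that are absent from your sketch. First, Lemma \ref{lem:frictionCoercive} shows that the quadratic friction terms in \eqref{eq:kineticEnergyParticlesEstimate} dominate the linear gravity source as soon as $\|\rho\|_{L^{3/2}}$ is bounded, which yields the $\lambda$-independent bound $E(t)\leq C\sup_{s\leq t}\|\rho\|_{L^\infty}^{2/3}$ (Lemma \ref{lem:boundsU}) and hence $\|u_\lambda\|_{W^{1,\infty}}\lesssim\|\rho_\lambda\|_{L^\infty}^2$. Second, Lemma \ref{lem:biLipschitzV} shows that the a priori catastrophic factor $e^{3\lambda t}$ in the pushforward formula \eqref{eq:fByCharacteristics} is cancelled, because the map $v\mapsto V(0,t,x,v)$ compresses velocity space at rate $e^{-\lambda t}$ up to a factor $M(t)=\exp(2\int_0^t\|\nabla u\|_\infty)$, giving $\|\rho_\lambda(t)\|_{L^\infty}\leq C M(t)^3$. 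Combining these two produces a self-improving estimate valid on $[0,T_\ast)$ for some $T_\ast>0$ (Lemma \ref{lem:T_astPositive}); the proof is then completed by showing the convergence on $[0,T_\ast)$ and using it to push $T_\ast=\infty$ via a continuation argument. None of this is automatic from the relaxation mechanism alone, which is all your proposal invokes.

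A second, more localized gap is the identification of the limit. You propose to pass to the limit in the perturbed transport equation for $\rho_\lambda$ and use ``continuity of the Stokes solution map in $\rho$''. But $u_\lambda$ solves the Brinkman equation with source $\rho_\lambda(\bar V_\lambda - u_\lambda)$, not $g\rho_\lambda$; showing $\rho_\lambda(\bar V_\lambda - u_\lambda - g)\to 0$ in a norm strong enough for the Stokes regularity to transfer requires bounds on $\nabla\rho_\lambda$ (or equivalently $\nabla j_\lambda$), which is exactly what the delicate decomposition in \eqref{eq:derivativeThreeTermes} of Lemma \ref{lem:APriori} achieves --- a direct attack via $\partial_{x_i}X$ and $\partial_{x_i}V$ fails because those individually blow up as $\lambda\to\infty$. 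The paper also introduces the intermediate field $\tilde u_\lambda$ and the error $\Phi = \bar V - \tilde u_\lambda - g$ (Lemma \ref{lem:uTilde}) precisely so that the time derivative entering the Gronwall argument for $\Phi$ is controllable, and then compares $\rho_\lambda$ to $\rho_\ast$ via averages over small cubes (Lemmas \ref{lem:d_lambda,delta}--\ref{lem:uTildeUAst}, Proposition \ref{pro:ConvergenceCube}) in order to sidestep the fact that one does not know $e^{3\lambda t}\det\nabla_w W\to 1$. Your proposal would need to supply substitutes for all of these steps to become a proof.
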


Formally, for large values of $\lambda$, the first equation in \eqref{eq:Vlasov Stokes}
forces the particle to attain the velocity $g + u(t,x)$, i.e., the density $f(t,x,v)$  concentrates around $g + u(t,x)$. 
Using that and integrating the first equation in \eqref{eq:Vlasov Stokes} in $v$ leads to the first equation in \eqref{eq:limitEquation}.
Moreover, $\bar{V}$ in the fluid equation in \eqref{eq:Vlasov Stokes} can formally be replaced by $g + u(t,x)$, which leads to the fluid equation
in \eqref{eq:limitEquation}.

Formally, the adjustment of the particle velocities described above happens in times of order  $1/\lambda$.
In fact, the process is more complicated as the fluid velocity changes very fast in this time scale as well. In other words,
there is a boundary layer of width $1/\lambda$
at time zero for the convergence of the fluid (and particle) velocity. This is the reason, why the convergence $u_\lambda \to u_\ast$ can only hold
 uniformly on time intervals $(t,T)$ for $t \geq 0$ as stated in the theorem.
The particles, however, do not move significantly in times of order $1/\lambda$. Thus, there is no boundary layer in the convergence $\rho_\lambda \to \rho_\ast$.

\subsection{Idea of the proof}
We introduce the kinetic energy of the particles
\[
	E(t) := \int_{\IR^3 \times \IR^3} \hspace{-1em} |v|^2 f \dd x \dd v .
\]
Using the Vlasov-Stokes equations \eqref{eq:Vlasov Stokes} yields the following energy identities for the fluid velocity and the particle energy (cf. Lemma \ref{lem:WellPosednessFluid} and Lemma \ref{lem:aPrioriWellPosedness}).
\begin{align}
\label{eq:energyFluid}
\|\nabla u\|_{L^2(\IR^3)}^2 + \| u \|_{L^2(\rho)} &= (u,j)_{L^2(\IR^3)} \leq \|\bar{V} \|^2_{L^2_\rho} \leq  E, \\
\label{eq:energyParticles}
	\frac{1}{2} \frac{d}{d t} E
	&=\lambda \left( g \cdot \int_{\IR^3 \times \IR^3} j \dd x 
	-  \int_{\IR^3 \times \IR^3} \hspace{-1em}    (u - v)^2 f \dd x \dd v - \|\nabla u \|^2_{L^2(\IR^3)} \right).
\end{align}
Here and in the following, the weighted $L^p$-norm is defined by
\[
	\|h\|_{L^p_\rho}^p := \int_{\IR^3} |h|^p \rho \dd x.
\]
As expected,  equation  \eqref{eq:energyParticles} shows that there is loss of energy due to friction (friction between the particles and the fluid as well as friction inside of the fluid),
but the gravity pumps energy into the system (if we assume $g \cdot \int_{\IR^3 \times \IR^3} j \dd x > 0$, which at least after some time should be the case). Note that the Vlasov-Stokes equations \eqref{eq:Vlasov Stokes} also imply that the mass of the particles $\|\rho\|_{L^1(\IR^3)}$
is conserved.

To analyze solutions to the Vlasov equation in \eqref{eq:Vlasov Stokes}, we look at the characteristic curves  $(X,V,Z)(s,t,x,v)$ starting at time $t$ at position $(x,v) \in \IR^3 \times \IR^3$, where $Z$ denotes the value of the solution $f$ along the characteristic curve.
\begin{equation}
	\label{eq:characteristics}
\begin{aligned}
	\partial_s{X} &= V, \qquad  &&X(t,t,x,v) = x, \\
	\partial_s{V} &= \lambda (g + u(s,{X}) - V(s,t,x,v)), \qquad &&V(t,t,x,v) = v, \\
	\partial_s{Z} &= 3 \lambda Z, \qquad &&Z(t,t,x,v) = f(t,x,v).
\end{aligned}
\end{equation}
By the standard theory, any solution $f \in W^{1,\infty}((0,T) \times \IR^3 \times \IR^3)$ with $u \in L^\infty((0,T);W^{1,\infty}(\IR^3))$
is of the form
\begin{equation}
	\label{eq:fByCharacteristics}
	f(t,x,v) = e^{3 \lambda t} f_0(X(0,t,x,v),V(0,t,x,v)).
\end{equation}

Using the characteristics as well as estimates based on the energy identities \eqref{eq:energyFluid} and \eqref{eq:energyParticles} and regularity theory of Stokes equations,
we prove global well-posedness of the Vlasov-Stokes equations \eqref{eq:Vlasov Stokes} for compactly supported initial data $f_0 \in W^{1,\infty}(\IR^3 \times \IR^3)$.
A similar approach based on an analysis of the characteristics has been used to prove existence of solutions to the Vlasov-Poisson equations in \cite{BD85}, \cite{Pfa92}, and \cite{Sch91} (see also \cite{Gla96}).
From the PDE point of view, the electrostatic potential appearing in the Vlasov-Poisson equation is similar to the fluid velocity in the Vlasov-Stokes equations.
However, in the Vlasov-Poisson equations, the force acting on the particles is the gradient of the electrostatic potential.
whereas in the Vlasov-Stokes equations, only the fluid velocity itself contributes.
This makes it possible to prove existence (and also uniqueness) in a much simpler way for the Vlasov-Stokes equations.

In order to prove the convergence in Theorem \ref{th:main}, the starting point is integrating the characteristics which yields
\begin{equation}
	\label{eq:integrateCharacteristics}
	V(t,0,x,v) - V(0,0,x,v) = \lambda \left(\int_0^t u_\lambda(s,X(s,0,x,v)) + g \dd s + X(0,0,x,v) - X(t,0,x,v) \right).
\end{equation}
Thus,
\begin{equation}
	\label{eq:almostTransportedByU}
	\left| X(t,0,x,v) - x - \int_0^t u_\lambda(s,X(s,0,x,v)) + g \dd s \right| \leq \frac{|V(t,0,x,v) - v|}{\lambda}.
\end{equation}
Therefore, provided the speed of the particles does not blow up, we see that for large values of $\lambda$ the particles are almost transported by the fluid plus the gravity. Clearly, this is also what happens for solutions to the limit inertialess equations \eqref{eq:limitEquation}.

In order to show that $u_\lambda$ is close to $u$, we introduce a fluid velocity $\tilde{u}_\lambda$ which can be viewed as intermediate between
$u_\lambda$ and $u_\ast$ by
\begin{equation}
	\label{eq:uTilde}
	- \Delta \tilde{u}_\lambda + \nabla p_\lambda =  g \rho_\lambda, \qquad \dv \tilde{u}_\lambda = 0.
\end{equation}
In order to prove smallness of $u_\lambda - \tilde{u_\lambda}$, one needs estimates on $\rho_\lambda$ and $u_\lambda$ that are uniform in $\lambda$,
which are more difficult to obtain than those that we use in the proof of well-posedness. 
Indeed, in view of the energy identity for the particles \eqref{eq:energyParticles}, any naive estimate based on that equation will
blow up as $\lambda \to \infty$. However, as the first term is linear in the velocity and the other terms (which have a good sign) are quadratic,
the energy $E$ cannot exceed a certain value as long as the particle density $\rho$ is not too concentrated (cf. Lemma \ref{lem:boundsU}).
In other words, if the energy is high enough, the quadratic friction terms will prevail over the linear gravitation terms and therefore 
prevent the energy from increasing further.
However, if concentrations of the particle density occur, the particles essentially fall down like one small and heavy particle,
leading to large velocities. Indeed, the terminal velocity of a spherical particle of radius $R$ in a Stokes fluid at rest is
\[
	V = \frac{2}{9}\frac{\rho_p - \rho_f}{\mu} g R^2.
\]

In order to rule out such concentration effects, we use again the representation of $f$ in \eqref{eq:fByCharacteristics} obtained from the characteristics.
Indeed, computing $\rho$ by taking the integral over $v$ in \eqref{eq:fByCharacteristics}, we can show that the prefactor $e^{3 \lambda t}$ 
in that formula is canceled due to concentration of $f$ in velocity space in regions of size $e^{-\lambda t}$ as long as we control $\nabla u$ in a suitable way (cf. Lemma \ref{lem:biLipschitzV}).
As $\nabla u$ is controlled by $E$ due to the energy identity \eqref{eq:energyFluid}, this enables us to get uniform estimates
for both $u$, $\nabla u$, and $\rho$ for small times.

It turns out that also estimates on derivatives of $\rho$ are needed to prove smallness of $u_\lambda - \tilde{u_\lambda}$.
These are provided by a more detailed analysis of the characteristics.

\subsection{Plan of the paper}

The rest of the paper is organized as follows.

In Section 2, we prove global well-posedness of the Vlasov Stokes equations \eqref{eq:Vlasov Stokes}, based on energy estimates, 
analysis of the characteristics, and a fixed point argument.

In Section 3, we derive a priori estimates that are uniform in $\lambda$ for small times by analyzing the characteristics more carefully.
In particular we prove and use that the supports of the solutions concentrate in the space of velocities.

In Section 4.1, we use those a priori proven in Section 4 to show that the fluid velocity $u_\lambda$ is close to 
the intermediate fluid velocity $\tilde{u}_\lambda$ defined in \eqref{eq:uTilde} as $\lambda \to \infty$.
In Section 4.2, we prove the assertion of the main result, Theorem \ref{th:main}, up to times where we have uniform a priori estimates.
This follows from compactness due to the a priori estimates and convergence of averages of $\rho_\lambda$ on small cubes, which we prove using again the characteristic equations.
In Section 4.3, we finish the proof of the main result, Theorem \ref{th:main}, by extending the
a priori estimates from Section 3 to arbitrary times. This is done by using both the a priori estimates and the convergence for small times.
\section{Global well-posedness of the Vlasov-Stokes equations}
In this section, we write $C$ for any constant that depends only on the initial datum.
Any additional dependencies are denoted by arguments of $C$, e.g. $C(\lambda t)$ is a constant that
depends only on $\lambda t$ and the initial datum. We use the convention that
$C$ is monotone in all its arguments.

\subsection{Estimates for the fluid velocity}

\begin{lemma}
	\label{lem:WellPosednessFluid}
 Let $g \in L^\infty( \IR^3 \times \IR^3)$ be nonnegative, and assume $Q > 0$ is such that
  $\supp g \subset B_Q(0) \subset \IR^3\times \IR^3$.
  Let
 \begin{align}
 	\rho (x) &:= \int_{\IR^3} g(x,v) \dd v, \\
 	 j(x) := \rho \bar{V} &:= \int_{\IR^3} g(x,v) v \dd v, \\
 	E &:= \int_{\IR^3 \times \IR^3} \hspace{-1em} g(x,v) |v|^2 \dd x \dd v.\\
 \end{align} 
Then there exists a unique weak solution $u \in W^{1,\infty}(\IR^3)$ to the Brinkman equation 
 \[
 	- \Delta u + \nabla p + \rho u = j.
 \]
Moreover, 
 \begin{align}
 	\label{eq:energyEstimateFluid}
 	\|\nabla u\|_{L^2(\IR^3)}^2 + \| u \|_{L^2_\rho(\IR^3)} &= (u,j)_{L^2(\IR^3)} \leq \|\bar{V} \|^2_{L^2_\rho(\IR^3} \leq  E, \\
	\label{eq:L^inftyFluid} 	
 	\| u \|_{L^\infty(\IR^3)} & \leq C(\|g\|_{L^\infty(\IR^3},\|g\|_{L^1(\IR^3)},E) (1 + Q), \\
 	\label{eq:LipschitzFluid}
 	\|u \|_{W^{1,\infty}(\IR^3)} & \leq C(Q,E) \|g\|_{L^\infty(\IR^3)}.
 \end{align}
\end{lemma}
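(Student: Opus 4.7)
For existence, uniqueness, and the energy identity \eqref{eq:energyEstimateFluid}, I would cast the Brinkman equation in variational form on the Hilbert space $H := \{v \in \dot H^1(\IR^3;\IR^3) : \dv v = 0\}$ endowed with the $\|\nabla \cdot\|_{L^2}$ norm. The bilinear form $B(u,v) := (\nabla u,\nabla v)_{L^2} + (u,v)_{L^2_\rho}$ is trivially coercive on $H$ and continuous via Sobolev embedding $\dot H^1 \hookrightarrow L^6$ together with $\rho \in L^{3/2}$; the linear functional $v \mapsto (j,v)_{L^2} = (\bar V,v)_{L^2_\rho}$ is continuous because $|(\bar V,v)_{L^2_\rho}| \leq \|\bar V\|_{L^2_\rho} \|v\|_{L^2_\rho}$ and $\|\bar V\|_{L^2_\rho}^2 = \int |j|^2/\rho \dd x \leq E$, the last inequality being the pointwise Cauchy--Schwarz bound $|j(x)|^2 \leq \rho(x) \int g(x,v)|v|^2 \dd v$ integrated in $x$. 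Lax--Milgram then produces a unique $u \in H$; testing against $u$ yields $\|\nabla u\|_{L^2}^2 + \|u\|_{L^2_\rho}^2 = (u,\bar V)_{L^2_\rho} \leq \|u\|_{L^2_\rho}\|\bar V\|_{L^2_\rho}$, which immediately gives \eqref{eq:energyEstimateFluid}.

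For the $L^\infty$ bound \eqref{eq:L^inftyFluid}, I would use the representation $u = \mathcal N \ast F$ with $F := j - \rho u$ and the Oseen tensor satisfying $|\mathcal N(x)| \leq C/|x|$. Since $\supp F \subset B_Q$, I split the convolution at $|x-y| = 1$: the outer part is bounded by $C(\|j\|_{L^1} + \|\rho u\|_{L^1})$, and both of these are controlled by $\|g\|_{L^1}^{1/2} E^{1/2}$ using the energy identity of the previous step (for $\|\rho u\|_{L^1}$ via $\int \rho|u| \leq \|\rho\|_{L^1}^{1/2} \|u\|_{L^2_\rho}$). The inner part I would handle via Cauchy--Schwarz with the kernel $|x-y|^{-1}$ paired with the pointwise factorizations $|j| \leq \rho^{1/2}\bigl(\int g|v|^2 \dd v\bigr)^{1/2}$ and $\rho|u| \leq \rho^{1/2}(\rho|u|^2)^{1/2}$; this reduces matters to local integrals of the form $\int_{B_1(x)} \rho(y)/|x-y|^2 \dd y$, bounded by $\|\rho\|_{L^\infty} \leq C\|g\|_{L^\infty} Q^3$, multiplied by factors of $E^{1/2}$. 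Tracking the $Q$-powers carefully yields a bound of the form $C(\|g\|_{L^\infty},\|g\|_{L^1},E)(1+Q)$ as claimed.

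For the Lipschitz bound \eqref{eq:LipschitzFluid}, I would invoke $L^p$ Calder\'on--Zygmund regularity for the Stokes operator, $\|\nabla^2 u\|_{L^p(\IR^3)} \leq C_p \|F\|_{L^p(\IR^3)}$, for some fixed $p > 3$. Since $\supp F \subset B_Q$ and pointwise $|F| \leq \|j\|_{L^\infty} + \|\rho\|_{L^\infty} \|u\|_{L^\infty}$, with $\|u\|_{L^\infty}$ already controlled by the previous step, $\|F\|_{L^p(\IR^3)} \leq C(Q,E)\|g\|_{L^\infty}$ for any fixed $p$. Morrey's embedding $W^{1,p}(\IR^3) \hookrightarrow L^\infty(\IR^3)$ for $p > 3$ applied to $\nabla u$ then gives \eqref{eq:LipschitzFluid}. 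The main technical obstacle is the careful tracking of the $Q$-, $E$-, $\|g\|_{L^\infty}$- and $\|g\|_{L^1}$-dependencies in the $L^\infty$ estimate, since the Oseen tensor is not globally in any $L^p(\IR^3)$; the near/far splitting is precisely what circumvents this, by trading global integrability of $\mathcal N$ against the bounded support of $F$ and pointwise velocity-moment bounds on $g$.
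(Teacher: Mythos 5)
Your Lax--Milgram setup, the derivation of \eqref{eq:energyEstimateFluid}, and the CZ/Morrey route to \eqref{eq:LipschitzFluid} are all fine and essentially interchangeable with the paper's (which derives the Lipschitz bound through $\|\nabla^2 u\|_{L^6}$ and the $\dot W^{1,6}\cap L^2 \hookrightarrow L^\infty$ embedding instead of $W^{1,p}$ Morrey for $p>3$, but those are equivalent in substance).

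The gap is in the $L^\infty$ bound \eqref{eq:L^inftyFluid}, specifically the $Q$-dependence. With your fixed split at $|x-y|=1$, the inner term after Cauchy--Schwarz is
\[
\Bigl(\int_{B_1(x)} \frac{\rho(y)}{|x-y|^2}\dd y\Bigr)^{1/2} E^{1/2} \leq (4\pi\|\rho\|_{L^\infty})^{1/2} E^{1/2},
\]
and $\|\rho\|_{L^\infty}\leq C\|g\|_{L^\infty}Q^3$, so this piece is of size $Q^{3/2}$, not $Q$; the outer piece is $Q$-independent. So your argument, as written, yields $\|u\|_{L^\infty}\leq C(\|g\|_\infty,\|g\|_1,E)(1+Q^{3/2})$. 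The exponent matters: \eqref{eq:L^inftyFluid} is fed directly into the Gronwall estimate for the growth of the support $Q(t)$ in Lemma~\ref{lem:aPrioriWellPosedness} (see \eqref{eq:velocityBound} and \eqref{eq:growthOfQ}); a bound $\|u\|_\infty\lesssim (1+Q)$ closes a linear Gronwall and gives global-in-time control of $Q(t)$, whereas $\|u\|_\infty\lesssim(1+Q)^{3/2}$ only closes a Riccati-type inequality that can blow up in finite time, so global well-posedness would be lost. The paper avoids $\|\rho\|_\infty$ entirely: it uses the velocity-moment interpolation $\|\rho\|_{L^{5/3}}\leq\|g\|_\infty^{2/5}E^{3/5}$ (no $Q$) combined with $\|j\|_{L^p}\leq Q\|\rho\|_{L^p}$ for $p\leq 5/3$ (the only linear $Q$) and an elliptic bootstrap $L^6\to L^{10}\to L^{30}\to L^\infty$. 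Your Oseen-tensor route can be repaired: either replace the fixed unit cutoff by a $Q$-dependent radius $R\sim 1/Q$ and optimize, which makes both the inner ($\sim R^{1/2}\|\rho\|_\infty^{1/2}E^{1/2}$) and the outer ($\sim \|g\|_1^{1/2}E^{1/2}/R$) pieces linear in $Q$, or import the $L^{5/3}$ kinetic bound on $\rho$ in the near-field estimate. As it stands, the claim ``tracking the $Q$-powers carefully yields $(1+Q)$'' does not follow from the decomposition you describe.
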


\begin{proof}
Existence and uniqueness of weak solutions in $\dot{H}^1(\IR^3) := \{ w \in L^{6}(\IR^3) \colon \nabla w \in L^2(\IR^3) \}$
follows from the Lax-Milgram theorem.

In the following, we write $\|\cdot\|_q$ instead of $\|\cdot\|_{L^q(\IR^3)}$ and $\|\cdot\|_{L_\rho^q}$ instead of $\|\cdot\|_{L_\rho^q(\IR^3)}$.
Testing the Brinkman equation with $u$ itself yields
	\begin{equation}
		\label{eq:energyIdentity}
		\|\nabla u\|_2^2 + \| u \|^2_{L^2_\rho} = (j,u)_{L^2(\IR^3)} \leq \|u\|_{L^2_\rho} \|\bar{V}\|_{L^2_\rho}.
	\end{equation}
	By the Cauchy-Schwarz inequality
	\begin{equation}
		\label{eq:VBarByE}
		\bar{V}^2 \rho = \frac{\left(\int_{\IR^3} g(x,v) v \dd v\right)^2}{\int_{\IR^3} g(x,v) \dd v} \leq \int_{\IR^3} g(x,v) v^2 \dd v.
	\end{equation}
	Hence,
	\[
		\| u \|^2_{L^2(\rho)} \leq \|\bar{V}\|_{L^2(\rho)} \leq E.
	\]
	Using again \eqref{eq:energyIdentity} yields \eqref{eq:energyEstimateFluid}. 
	Using the critical Sobolev embedding, we have
	\begin{equation}
		\label{eq:uInL^6}
		\|u\|_6^2 \leq C \| \nabla u \|_2^2 \leq C E.
	\end{equation}
	Moreover, we can use this Sobolev inequality in \eqref{eq:energyEstimateFluid} to get
	\[
		\|u\|_6^2 \leq  C\|u\|_6 \|j\|_{{6/5}}.
	\]
	Using the definition of $Q$  yields $\|j\|_{6/5} \leq C(Q) \|g\|_{\infty}$ and therefore
	\begin{equation}
		\label{eq:FluidEstimatesLinear}
		\| \nabla u \|_{2} + \|u\|_{6} \leq C(Q) \|g\|_{\infty}
	\end{equation}
	
Standard regularity theory for the Stokes equation (see \cite{Ga11}) implies
\begin{equation}
	\label{eq:ellipticRegularity}
	\| \nabla^2 u\|_{q} \leq  C\|\rho u \|_q + C\|j\|_q.
\end{equation}
for all $ 1 < q < \infty$.
In order to prove \eqref{eq:LipschitzFluid}, we use \eqref{eq:ellipticRegularity} and  \eqref{eq:uInL^6} to get
\[
	\| \nabla^2 u\|_{6} \leq  C\|\rho u \|_6 + C\|j\|_6 \leq  C\|\rho\|_\infty \| u \|_6 + C\|j\|_6 \leq C(E, Q) \|g\|_\infty.
\]
Hence, by Sobolev embedding and \eqref{eq:FluidEstimatesLinear}
\[
	\| \nabla u\|_{\infty} \leq C \| \nabla^2 u\|_{6} + C \| \nabla u\|_{2} \leq C(E, Q) \|g\|_\infty,
\]
and similarly for $\|u\|_{\infty}$.

It remains to prove  \eqref{eq:L^inftyFluid}.
Let $R > 0$. Then, 
\begin{align}
	\rho = \int_{\IR^3} g \dd v \leq \int_{\{ |v| < R\}} g \dd v + R^{-2} \int_{\{|v|>R\}} |v|^2 g \dd v
		 \leq C R^3 \|g\|_{\infty}  + C R^{-2} \int_{\{|v|>R\}} |v|^2 g \dd v.
\end{align}
We choose 
\[
	R =  \left(\int_{\IR^3} |v|^2 f \dd v\right)^{1/5} \|g\|_{\infty}^{-1/5}.
\]
Thus,
\[
	\rho \leq \|g\|_{\infty}^{2/5} \left(\int_{\IR^3} |v|^2 g \dd v\right)^{3/5},
\]
and therefore,
\begin{equation}
	\label{eq:rho5over3}
	\|\rho\|_{5/3} \leq \|g\|_{\infty}^{2/5} E^{\frac{3}{5}}.
\end{equation}
Moreover, by definition of $Q$, \eqref{eq:rho5over3} implies for all $1 \leq p \leq 5/3$,
\begin{equation}
	\label{eq:j5over3}
	\|j\|_{p} \leq Q \|\rho \|_{p} \leq C(\|g\|_{\infty},\|g\|_{1},E) Q.
\end{equation}

Sobolev and H\"older's inequality imply
\[
	\| u \|_{10} \leq C \| \nabla^2 u\|_{30/23} \leq  C\|\rho\|_{5/3} \|u\|_{6} + C\|j\|_{30/23} \leq C(\|g\|_{\infty},\|g\|_{1},E) (1 + Q),
\]
where we used \eqref{eq:uInL^6}, \eqref{eq:rho5over3}, and \eqref{eq:j5over3}.
Now, we can repeat the argument, using this improved estimate for $u$ in \eqref{eq:ellipticRegularity}.
This yields
\[
	\|u\|_{30} \leq C(\|g\|_{\infty},\|g\|_{1},E) (1 + Q).
\]
Using again \eqref{eq:ellipticRegularity} yields
\[
	\| \nabla^2 u\|_{30/19} \leq  C(\|g\|_{\infty},\|g\|_{1},E) (1 + Q).
\]
As $30/19 > 3/2$, we can apply Sobolev embedding to get
\[
	\|u\|_{\infty} \leq C\| \nabla^2 u\|_{30/19} + C \|u\|_{6} \leq C(\|g\|_{\infty},\|g\|_{1},E) (1 + Q),
\]
which finishes the proof of \eqref{eq:L^inftyFluid}.
\end{proof}

\subsection{A priori estimates for the particle density}
\label{sec:energyParticles}

\begin{lemma}
	\label{lem:aPrioriWellPosedness}
	Let  $T > 0$ and $f_0 \in W^{1,\infty}(\IR^3 \times \IR^3)$ and 
let $Q_0 > 0$ be minimal such that $\supp f_0 \subset B_{Q_0}(0)$.
	Assume $f \in W^{1,\infty}((0,T) \times \IR^3 \times \IR^3)$ is a solution to \eqref{eq:Vlasov Stokes}
	with $u \in L^\infty((0,T);W^{1,\infty}(\IR^3))$. Then, $f$ is compactly supported on $[0,T]\times\IR^3 \times \IR^3$.
	Let $Q(t)$ be minimal such that $\supp f(t,\cdot,\cdot) \subset B_{Q(t)}(0)$. Furthermore, define
	\begin{align}
		E(t) &:=  \int_{\IR^3 \times \IR^3} \hspace{-1em} |v|^2 f \dd x \dd v.
	\end{align}
	Then,
	\begin{align}
		\label{eq:fL^Infty}
		\|f(t,\cdot,\cdot)\|_{L^\infty(\IR^3\times\IR^3)} &= e^{3 \lambda t}, \\
		\label{eq:massConservation} 
		\| \rho \|_1 &= 1, \\
		\label{eq:kineticEnergyParticles} 
		\partial_t E &= 2 \lambda \left( g \cdot  \int_{\IR^3} j \dd x 
	-   \int_{\IR^3 \times \IR^3} \hspace{-1em}  (u - v)^2 f \dd x \dd v - \|\nabla u \|^2_{L^2(\IR^3)} \right) \\
		\label{eq:kineticEnergyParticlesEstimate}
		&\leq  2 \lambda \bigg( C E^{\frac{1}{2}}	-  \int_{\IR^3 \times \IR^3} \hspace{-1em}  (v - \bar{V})^2 f \dd x \dd v - 
		\|u-\bar{V}\|_{L^2_\rho(\IR^3)}^2 - \|\nabla u \|^2_{L^2(\IR^3)} \bigg)\!, \hspace{1.2em}\\
		\label{eq:particleEnergyEstimate}
		E(t) & \leq C(1+ (\lambda t)^2), \\
		\label{eq:growthOfQ}
		Q(t) & \leq C(t,\lambda).
	\end{align}
\end{lemma}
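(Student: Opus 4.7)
The plan is to establish the claims roughly in the order listed, reading off \eqref{eq:fL^Infty} and the compact support directly from the characteristic representation, then deriving \eqref{eq:massConservation}--\eqref{eq:kineticEnergyParticles} from the Vlasov equation by integration in $(x,v)$ and against $|v|^2$, and finally closing \eqref{eq:kineticEnergyParticlesEstimate}--\eqref{eq:growthOfQ} by combining these with Lemma~\ref{lem:WellPosednessFluid}.

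First, under the assumed regularity $u \in L^\infty((0,T);W^{1,\infty}(\IR^3))$, the characteristic system \eqref{eq:characteristics} has a unique global solution on $[0,T]$ with $(X,V)$ depending smoothly on $(t,x,v)$, and \eqref{eq:fByCharacteristics} yields $\|f(t,\cdot,\cdot)\|_\infty = e^{3\lambda t}\|f_0\|_\infty$, which under the implicit normalization $\|f_0\|_\infty = 1$ absorbed into the $C$-convention gives \eqref{eq:fL^Infty}. Since $|V|$ and hence $|X|$ stay bounded on $[0,T]$ (using the $L^\infty$-bound on $u$), $f(t,\cdot,\cdot)$ is compactly supported. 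Integrating the Vlasov equation in $(x,v)$ and using that both the transport and the $v$-divergence terms vanish by compact support gives conservation of $\int f\,\dd x\dd v$, hence \eqref{eq:massConservation}.

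Second, to obtain \eqref{eq:kineticEnergyParticles} I multiply the Vlasov equation by $|v|^2$ and integrate. The $x$-transport term vanishes; integrating the $v$-divergence by parts gives $-2\int v\cdot(g+u-v)f\,\dd x\dd v = -2\bigl(g\cdot\int j\,\dd x + (u,j)_{L^2} - E\bigr)$, so $\partial_t E = 2\lambda\bigl(g\cdot\int j + (u,j) - E\bigr)$. The fluid energy identity \eqref{eq:energyEstimateFluid} yields $(u,j) = \|\nabla u\|_2^2 + \|u\|_{L^2_\rho}^2$, and expanding $(u-v)^2 = u^2 - 2u\cdot v + v^2$ and integrating against $f$ gives $\int(u-v)^2 f = \|u\|_{L^2_\rho}^2 - 2(u,j) + E$; substituting both into $\partial_t E$ produces the identity \eqref{eq:kineticEnergyParticles}. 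For \eqref{eq:kineticEnergyParticlesEstimate}, I decompose $u-v = (u - \bar V) + (\bar V - v)$; since $\bar V$ depends only on $x$ and $\int(\bar V - v)f\,\dd v = \bar V\rho - j = 0$, the cross term vanishes upon integration in $v$, giving $\int(u-v)^2 f = \|u-\bar V\|_{L^2_\rho}^2 + \int(v-\bar V)^2 f$. The linear term is controlled by Cauchy--Schwarz and \eqref{eq:massConservation}: $|g\cdot\int j|\leq \int|v|f\leq \|f\|_{L^1}^{1/2}E^{1/2}\leq CE^{1/2}$. Dropping the three nonnegative terms in \eqref{eq:kineticEnergyParticlesEstimate} and applying $\partial_t E\leq 2\lambda CE^{1/2}$, i.e.\ $\partial_t E^{1/2}\leq \lambda C$, integrates to \eqref{eq:particleEnergyEstimate}.

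Third, for \eqref{eq:growthOfQ} I analyze the characteristics. The velocity equation $\partial_s V = \lambda(g + u(s,X) - V)$ admits the barrier argument $\partial_s|V|^2 \leq 2\lambda|V|(|g|+\|u(s)\|_\infty) - 2\lambda|V|^2$, which gives $|V(s)|\leq \max(|v|,\,|g|+\sup_{\tau\leq s}\|u(\tau)\|_\infty)$; equivalently, by Duhamel, $|V(s)| \leq |v|+|g|+\sup_{\tau\leq s}\|u(\tau)\|_\infty$. Combined with $|X(s)|\leq|x|+\int_0^s|V(\tau)|\,\dd\tau$, this yields $Q(s) \leq C + (1+s)\sup_{\tau\leq s}\|u(\tau)\|_\infty$. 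Applying the $L^\infty$-bound \eqref{eq:L^inftyFluid} with the already established values of $\|f\|_\infty$, $\|f\|_1$, and $E$ produces $\|u(\tau)\|_\infty \leq C(t,\lambda)(1+Q(\tau))$ for $\tau\leq t$.

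The main obstacle is closing this implicit inequality $Q(t) \leq A(t,\lambda) + B(t,\lambda)\sup_{s\leq t}Q(s)$: the coefficient $B(t,\lambda)$ need not be small, so one cannot simply solve for $Q$. I would handle this by a standard bootstrap: on a sufficiently short time interval $[0,\tau_1]$ (with $\tau_1$ depending on $\lambda$) one has $B(\tau_1,\lambda)\leq 1/2$, yielding a uniform bound $Q(\tau_1) \leq 2A(\tau_1,\lambda)$; restarting the argument from time $\tau_1$ with the same structural estimates \eqref{eq:fL^Infty}, \eqref{eq:massConservation}, \eqref{eq:particleEnergyEstimate} (whose constants still depend only on initial data, $t$, and $\lambda$) and iterating a finite number of times reaches arbitrary $t$ with a bound $Q(t)\leq C(t,\lambda)$, which is \eqref{eq:growthOfQ}.
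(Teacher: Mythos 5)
The derivations of \eqref{eq:fL^Infty}--\eqref{eq:particleEnergyEstimate} are essentially the paper's, up to reorganizing the algebra (you substitute the fluid energy identity and the expanded square into $\partial_t E = 2\lambda(g\cdot\int j + (u,j) - E)$, the paper isolates the term $\int u\cdot(u-v)f$; same computation, equally valid). The genuine problem is in your treatment of \eqref{eq:growthOfQ}.

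You bound the velocity by the \emph{sup} form $|V(t)| \le |v| + |g| + \sup_{\tau\le t}\|u(\tau)\|_\infty$, which leads to an implicit inequality $Q(t) \le A(t,\lambda) + B(t,\lambda)\bigl(1+\sup_{s\le t}Q(s)\bigr)$, and you propose to close it by a bootstrap, claiming that $B(\tau_1,\lambda)\le 1/2$ on a short interval. This claim is false: $B(\tau_1,\lambda)$ inherits the multiplicative constant from the $L^\infty$ estimate \eqref{eq:L^inftyFluid}, which is a fixed structural constant of the Stokes regularity theory and does \emph{not} vanish as $\tau_1\to 0$. Indeed the relevant quantities $\|f(\tau)\|_\infty = e^{3\lambda\tau}$, $\|f\|_1=1$, $E(\tau)$ all tend to their initial values, so $C(\lambda\tau,\dots)\to C(0,\dots)>0$, and there is no reason this limit is $\le 1/2$. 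With $B\ge 1$ the implicit inequality $Q\le A+B(1+\sup Q)$ yields no bound at all, and the iteration never gets started.

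The way the paper closes \eqref{eq:growthOfQ} is to keep the Duhamel term as an \emph{integral in time} rather than passing to the supremum. From $V(t)=e^{-\lambda t}v+\lambda\int_0^t e^{\lambda(s-t)}(g+u(s,X(s)))\,\dd s$, one estimates $\lambda e^{\lambda(s-t)}\le\lambda$ to obtain
\[
	|V(t,0,x,v)|\ \le\ e^{-\lambda t}|v| + |g| + \lambda\int_0^t\|u(s,\cdot)\|_{L^\infty(\IR^3)}\,\dd s,
\]
(the paper's display omits the factor $\lambda$, but that is harmless here since the claim \eqref{eq:growthOfQ} allows full $\lambda$-dependence). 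Combining this with \eqref{eq:L^inftyFluid}, the already established bounds \eqref{eq:fL^Infty}, \eqref{eq:massConservation}, \eqref{eq:particleEnergyEstimate}, and the analogous integral bound for $|X|$, one arrives at a genuine Gronwall inequality
\[
	Q(t)\ \le\ C(t,\lambda)\Bigl(1+\int_0^t\bigl(1+Q(s)\bigr)\dd s\Bigr),
\]
which closes directly. In short: you should replace the sup estimate on $\|u\|_\infty$ by the time integral (accepting the extra $\lambda$), apply Gronwall, and drop the bootstrap. As written, the bootstrap step is the one point where the argument would fail.
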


\begin{proof}
By the regularity assumptions on $f$ and $u$, the characteristics in \eqref{eq:characteristics} are well defined and \eqref{eq:fByCharacteristics} 
holds. This shows that the support of $f$ remains uniformly bounded on compact time intervals.

The  exponential growth of the $L^\infty$-norm of $f$ \eqref{eq:fL^Infty}  follows from  the characteristic equations as we have seen in \eqref{eq:fByCharacteristics}.

Mass conservation \eqref{eq:massConservation} follows directly from integrating the Vlasov equation \eqref{eq:Vlasov Stokes}.

We multiply the Vlasov equation by $|v|^2$ and integrate to find
\begin{equation}
\begin{aligned}
	\partial_t E
	& = 2 \int_{\IR^3 \times \IR^3} \hspace{-1em}  v \cdot \lambda (g + u - v) f \dd x \dd v  \\
	& = 2 \lambda \left( g \cdot \int_{\IR^3 \times \IR^3} \hspace{-1em}  v  f \dd x \dd v
	- \int_{\IR^3 \times \IR^3} \hspace{-1em}    (u - v)^2 f \dd x \dd v 
	+  \int_{\IR^3 \times \IR^3} \hspace{-1em}  u \cdot (u-v) f \dd x \dd v  \right)\\
	& = 2 \lambda \left( g \cdot \int_{\IR^3 \times \IR^3} j \dd x 
	-  \int_{\IR^3 \times \IR^3} \hspace{-1em}    (u - v)^2 f \dd x \dd v - \|\nabla u \|^2_{L^2(\IR^3)} \right).
\end{aligned}
\end{equation}
This yields the identity \eqref{eq:kineticEnergyParticles}. By the Cauchy-Schwarz inequality
\begin{equation}
	\label{eq:jE^1/2}
	\int_{\IR^3} |j| \dd x \leq \int_{\IR^3 \times \IR^3} \hspace{-1em} |v| f \dd v \dd x \leq \|\rho\|_{L^1(\IR^3)}^{1/2} E^{1/2}.
\end{equation}
Moreover, by definition of $\bar{V}$ in \eqref{eq:defJVbar}
\begin{equation}
	\label{eq:Variance}
\begin{aligned}
	\int_{\IR^3 \times \IR^3} \hspace{-1em}    (u - v)^2 f \dd x \dd v   
	&=\int_{\IR^3 \times \IR^3} \!\!  \left( (v - \bar{V})^2 + (\bar{V} - u)^2 - 2 (v - \bar{V})(\bar{V} - u) \right)f \dd x \dd v \\
	&= \int_{\IR^3 \times \IR^3} \hspace{-1em}  (v - \bar{V})^2 f \dd x \dd v + \|u-\bar{V}\|_{L^2_\rho(\IR^3)}^2.
\end{aligned}
\end{equation}
Using \eqref{eq:jE^1/2} and \eqref{eq:Variance} shows \eqref{eq:kineticEnergyParticlesEstimate}.

In particular
\[
	\partial_t E \leq C \lambda E^{1/2}.
\]
This proves \eqref{eq:particleEnergyEstimate} by a comparison principle for ODEs.

The characteristic equation for $V$ in \eqref{eq:characteristics} implies 
\begin{align*}
	|V(t,0,x,v)|& = \left|e^{-\lambda t} \left( v + \lambda \int_0^t  e^{\lambda s} (g + u(s,X(s,0,x,v))) \dd s \right)\right| \\
	&\leq e^{-\lambda t} v + |g| + \int_0^t \|u(s\cdot)\|_{L^\infty(\IR^3)} \dd s.
\end{align*}
Thus, for all $ (x,v) \in \supp f_0$, we get by Lemma \ref{lem:WellPosednessFluid}, \eqref{eq:fL^Infty}, \eqref{eq:massConservation}, and
\eqref{eq:particleEnergyEstimate}
\begin{equation}
\begin{aligned}
	\label{eq:velocityBound}
	|V(t,0,x,v)| & \leq Q_0 + 1 + 
	C(\|f\|_{L^\infty((0,t) \times \IR^3\times \IR^3)},\|E\|_{L^\infty(0,t)}) \int_0^t (1 + Q(s)) \dd s \\
	& \leq  C + C (\lambda t) \int_0^t (1+Q(s)) \dd s.
\end{aligned}
\end{equation}
By the equation for $X$, we get for all $ (x,v) \in \supp f_0$
\begin{equation}
	\label{eq:PositionBound}
	|X(t,0,x,v)| \leq Q_0 + \int_0^t |V(s,0,x,v)| \dd s \leq Q_0 + t C (\lambda t) \int_0^t (1+Q(s)) \dd s.
\end{equation}
Hence,
\[
	Q(t) \leq  \sup_{(x,v) \in \supp f_0} |( X(t,0,x,v),V(t,0,x,v) )| \leq  C +  (1+t) C(\lambda t) \int_0^t (1+Q(s)) \dd s.
\]
Gronwall's equation yields \eqref{eq:growthOfQ}.
\end{proof}

\subsection{Well-posedness by the Banach fixed point theorem}

\begin{proposition}
	\label{pro:WellPosedness}
	Let $f_0 \in W^{1,\infty}(\IR^3 \times \IR^3) $  with compact support.
	Then, for all $T>0$, there exists a unique solution  $f \in W^{1,\infty}((0,T) \times \IR^3 \times \IR^3)$ to
	 \eqref{eq:Vlasov Stokes} with $u \in L^{\infty}((0,T);W^{2,\infty}(\IR^3)) \cap W^{1,\infty}((0,T)\times\IR^3))$. 
\end{proposition}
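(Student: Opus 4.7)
The plan is to apply the Banach fixed point theorem to the map $\Phi$ that, given a fluid velocity $u$, solves the Vlasov equation in \eqref{eq:Vlasov Stokes} via the characteristics \eqref{eq:characteristics}, reads off the marginals $\rho_f$ and $j_f$, and returns the associated Brinkman solution from Lemma \ref{lem:WellPosednessFluid}. Concretely, for $T,M>0$ to be chosen, consider the set
\begin{equation*}
\mathcal{X}_T := \{u \in L^\infty((0,T); W^{1,\infty}(\IR^3)) \,:\, \|u\|_{L^\infty((0,T); W^{1,\infty})} \leq M\},
\end{equation*}
endowed with the $L^\infty((0,T)\times \IR^3)$ metric, under which $\mathcal{X}_T$ is complete since the $W^{1,\infty}$-bound is preserved under uniform convergence. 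For $u \in \mathcal{X}_T$, the system \eqref{eq:characteristics} has globally Lipschitz coefficients, hence unique $C^1$ characteristics, and \eqref{eq:fByCharacteristics} defines a $W^{1,\infty}$ function $f(t,\cdot,\cdot)$ whose support is controlled via \eqref{eq:growthOfQ}.

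For the self-mapping property, Lemma \ref{lem:aPrioriWellPosedness} bounds $\|f\|_\infty$, $E$, $Q$, and $\|\rho\|_1$ purely in terms of $(T,\lambda, f_0)$, so \eqref{eq:LipschitzFluid} in Lemma \ref{lem:WellPosednessFluid} gives $\|\Phi(u)\|_{L^\infty((0,T); W^{1,\infty})} \leq C(T,\lambda)$; picking $M = C(T,\lambda)$ yields $\Phi(\mathcal{X}_T) \subset \mathcal{X}_T$. For the contraction step, take $u_1,u_2 \in \mathcal{X}_T$, and denote with subscript $i$ the corresponding characteristics, marginals, and output fluid velocities. Writing the ODE for the differences $(X_1-X_2, V_1-V_2)$ and using the uniform Lipschitz bound $\|\nabla u_k\|_\infty \leq M$, Gronwall yields
\begin{equation*}
\sup_{s,x,v} \bigl(|X_1-X_2|(s,0,x,v) + |V_1-V_2|(s,0,x,v)\bigr) \leq T\, e^{C(\lambda, M) T}\, \|u_1-u_2\|_{L^\infty((0,T)\times\IR^3)}.
\end{equation*}
Using $f_0 \in W^{1,\infty}$ in \eqref{eq:fByCharacteristics} propagates this bound to $\|f_1-f_2\|_{L^\infty}$ (absorbing the prefactor $e^{3\lambda T}$ into the constant), then, via the uniform support bound $Q$, to $\|\rho_1-\rho_2\|_\infty$ and $\|j_1-j_2\|_\infty$. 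Applying the reasoning of \eqref{eq:L^inftyFluid} to the Brinkman equation satisfied by $\Phi(u_1)-\Phi(u_2)$ produces a contraction factor of the form $T\, e^{C(\lambda,M)T}$, which is $<1$ for $T$ small enough. Banach's theorem then yields a unique fixed point, hence a unique local solution on $[0,T_0]$.

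To reach arbitrary $T$, note that the a priori bounds of Lemma \ref{lem:aPrioriWellPosedness} depend only on $(t, \lambda, f_0)$ and remain finite on every compact interval. Restarting at $t=T_0$ with $f(T_0,\cdot,\cdot) \in W^{1,\infty}$, compactly supported, the local theorem reapplies, and finitely many iterations cover $[0,T]$; uniqueness is preserved at each step. The enhanced spatial regularity $u \in L^\infty((0,T); W^{2,\infty})$ comes from bootstrapping \eqref{eq:ellipticRegularity}: since $f$ is compactly supported in $W^{1,\infty}$, the marginals $\rho$ and $j$ lie in $W^{1,q}(\IR^3)$ for every $q$, so $u \in W^{3,q}$ for large $q$, and Sobolev embedding yields $W^{2,\infty}$. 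For $u \in W^{1,\infty}((0,T)\times \IR^3)$, one differentiates the Brinkman equation in $t$ and uses the Vlasov equation to bound $\partial_t \rho$ and $\partial_t j$ in $L^\infty$.

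The main obstacle is the circular-looking interplay between the radius $M$ of the ball (determined by the self-mapping bound from Lemma \ref{lem:WellPosednessFluid}) and the contraction constant $T\, e^{C(\lambda,M)T}$, which grows with $M$. Because the dependencies in Lemmas \ref{lem:WellPosednessFluid} and \ref{lem:aPrioriWellPosedness} are algebraic in the controlled quantities $E$, $Q$, and hence in $\lambda T$, one first picks $T \leq T_\star(\lambda)$, then defines $M := C(T,\lambda)$, and finally further shrinks $T$ to make the contraction factor small; no circularity arises.
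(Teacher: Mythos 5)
Your overall strategy---Banach fixed point on a small-time ball, followed by continuation via global a priori bounds---is the same as the paper's. The one substantive variation is where you cut the loop: you iterate on the fluid velocity $u$ in $\mathcal{X}_T\subset L^\infty W^{1,\infty}$, while the paper iterates on the distribution function $h$ in a set $Y\subset L^\infty$ carrying mass, energy, and support constraints. Both are legitimate, and both require a contraction estimate passing through the characteristics and the Brinkman solver; the paper's choice is slightly more convenient because the constraints defining $Y$ are exactly the quantities one must control for the self-mapping step, and because contracting directly in $L^\infty$ on $h$ avoids one more pass through the (nonlinear, since $\rho u$ has both factors depending on $h$) Brinkman solve.

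There is, however, a genuine gap in your self-mapping argument. You cite Lemma \ref{lem:aPrioriWellPosedness} to bound $\|f\|_\infty$, $E$, $Q$, and $\|\rho\|_1$ ``purely in terms of $(T,\lambda,f_0)$''. But that lemma is stated for a solution $(f,u)$ of the \emph{coupled} system \eqref{eq:Vlasov Stokes}; in particular, the crucial energy inequality \eqref{eq:kineticEnergyParticles}--\eqref{eq:particleEnergyEstimate} uses the identity $\int u\cdot\rho(u-\bar V)\,\mathrm d x=-\|\nabla u\|_{L^2}^2$, which holds only because $u$ solves Brinkman with $\rho,\bar V$ coming from the \emph{same} $f$. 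During your iteration, $u\in\mathcal X_T$ is arbitrary, the resulting $f$ is its Vlasov push-forward, and the two are \emph{not} coupled, so \eqref{eq:kineticEnergyParticles} and its consequences are unavailable. One must instead prove the needed bounds directly for the uncoupled iterate---which is exactly what the paper's proof does: $\partial_t E\le 2(|g|+\|u\|_\infty)\int(1+|v|^2)f$ followed by Gronwall, and a direct support estimate using only $\|u\|_\infty\le M$. Under that direct estimate the bound on $\|\Phi(u)\|_{W^{1,\infty}}$ does depend on $M$ (through $E$ and $Q$), not only on $(T,\lambda,f_0)$; the reason the self-map still works is that the $M$-dependence enters multiplied by $T$, so for fixed $M$ large enough and $T$ small enough the ball is preserved. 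Your closing paragraph gestures at this but does not resolve it, since the $C(T,\lambda)$ you use to define $M$ is itself not $M$-free once you stop invoking Lemma \ref{lem:aPrioriWellPosedness}. Once that self-mapping estimate is redone directly (as in the paper), the rest of your argument---contraction in $L^\infty$, continuation to arbitrary $T$ using the now-applicable Lemma \ref{lem:aPrioriWellPosedness}, and the regularity bootstrap for $u$---is sound.
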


\begin{proof}
We want to prove existence of solutions using the Banach fixed point theorem.
Let $Q_1,E_1 > 0$. We define the metric space, where we want to prove contractiveness,
\begin{align}
	Y := \bigg\{ h \in L^{\infty}((0,T)\times \IR^3 \times \IR^3) \colon &h \geq 0, \|h(t,\cdot)\|_{L^1(\IR^3)} = \|f_0\|_{L^1(\IR^3)}, \\ 
	 &\int_{\IR^3\times\IR^3} \hspace{-1em}  (1 + |v|^2) h \dd x \dd v \leq E_1, \supp h \subset [0,T] \times \overline{B_{Q_1}(0)} \bigg\}.
\end{align}
Then, $Y$ is a complete metric space.
Let $T > 0$ and $h_1, h_2 \in Y$. For $i=1,2$, we define $u_i$ to be the solution to
\[
	- \Delta u_i + \nabla p = \int_0^\infty \int_{\IR^3} (v-u_i) h_i \dd v.
\]

We define the characteristics $(X_i,V_i)(s,t,x,v)$ analogously to \eqref{eq:characteristics} by
\begin{align}
	\partial_s (X_i,V_i)(s,t,x,v) &= (V_i(s,t,x,v),g + u_i(s,X_i(s,t,x,v)) - V_i(s,t,x,v)), \\
	(X_i,V_i)(t,t,x,v) = (x,v).
\end{align}
Then, the solutions to the equation
\begin{equation}
	\partial_t f_i + v \cdot \nabla_x f_i + \lambda \dv_v \left(  g f_i +  (u_i-v) f_i  \right) = 0,
\end{equation}
with initial datum $f_0$ is given by
\begin{equation}
	\label{eq:fByCharacteristicsContraction}
	f_i(t,x,v) = e^{3\lambda t} f_0 ((X_i,V_i)(0,t,x,v)),
\end{equation}
and $f_i \in W^{1,\infty}((0,T)\times\IR^3\times\IR^3)$.
We estimate
\begin{equation}
	\label{eq:difff1f2}
	|f_1(t,x,v) - f_2(t,x,v)| 
	\leq e^{3\lambda t} \| \nabla f_0 \|_{L^\infty(\IR^3\times\IR^3)} |(X_1,V_1)(0,t,x,v) - (X_2,V_2)(0,t,x,v)|.
\end{equation}
Furthermore, writing $X_i(s)$ instead of $X_i(s,t,x,v)$ and similar for $V_i$, we have for all $0 \leq s \leq t$
\begin{align}
	&|(X_1,V_1)(s) - (X_2,V_2)(s)| \\
	&\leq \int_s^t |\left(V_1(\tau) - V_2(\tau), \lambda \left(u_1(\tau,X_1(\tau))- u_2(\tau,X_2(\tau)) - V_1(\tau) + V_2(\tau) \right)\right)| \dd \tau \\
	& \leq  \int_s^t |V_1(\tau) - V_2(\tau)| + \|\nabla u_1(\tau,\cdot)\|_{L^\infty(\IR^3)} |X_1(\tau) - X_2(\tau)| 
	+ \| u_1(\tau,\cdot) - u_2(\tau,\cdot)\|_{L^\infty(\IR^3)} \dd \tau  \\
	& \leq  C(X,Q_1,E_1) V
		\int_s^t |(X_1,V_1)(\tau) - (X_2,V_2)(\tau)| \dd \tau  +  C(Q_1,E_1) (t-s) \|g_1 - g_2 \|_{L^\infty(\IR^3)},
\end{align}
where we used Lemma \ref{lem:WellPosednessFluid}.
Gronwall's inequality implies
\[
	|(X_1,V_1)(t) - (X_2,V_2)(t)| \leq C(Q_1,E_1) t \|g_1 - g_2 \|_{L^\infty(\IR^3)} 
	\exp\left(C(Q_1,E_1) \|g_1\|_{L^\infty(\IR^3)}t \right).
\]
Inserting this in \eqref{eq:difff1f2} yields
\begin{equation}
	\label{eq:contraction}
	\begin{aligned}
	&\|f_1 - f_2\|_{L^\infty((0,T) \times \IR^3 \times \IR^3)} \\
	&\leq T e^{3T } C(Q_1,E_1) \| \nabla f_0 \|_{L^\infty(\IR^3)} \|g_1 - g_2 \|_{L^\infty(\IR^3)}
	\exp\left(C(Q_1,E_1) T \|g_1\|_{L^\infty(\IR^3)} \right)
	\end{aligned}
\end{equation}

For $L>0$, consider $B_L(0) \subset Y$. 
Then, for all $L$, equation \eqref{eq:contraction} implies that there exists $T>0$ such that the mapping
$g \mapsto f$ is contractive. We have to check that $g \in B_L(0)$ implies $f \in B_L(0)$.
First, 
\begin{equation}
	\|f(t,\cdot,\cdot)\|_{L^1(\IR^3)} = \|f_0\|_{L^1(\IR^3)} \label{eq:massConservationContraction}
\end{equation}
follows from the equation.
Moreover, for any $L > \|f_0\|_{L^\infty( \IR^3 \times \IR^3)}$, equation \eqref{eq:fByCharacteristicsContraction} implies that we can choose $T$ sufficiently small such that
\begin{equation}
	\label{eq:Linftyf}
	\|f\|_{L^\infty((0,T) \times \IR^3 \times \IR^3)} = \|f_0\|_{L^\infty(\IR^3 \times \IR^3)} e^{3 \lambda T} \leq L.
\end{equation}

Furthermore, we have
\begin{align}
	\partial_t \int_{\IR^3} \int_{\IR^3} |v|^2 f \dd x \dd v 
	& = 2 \int_{\IR^3} \int_{\IR^3} v \cdot (g + u - v) f \dd x \dd v  \\
	& \leq   2(|g| +  \|u\|_{L^\infty(\IR^3)}) \int_{\IR^3} \int_{\IR^3} (1 + |v|^2)  f \dd x \dd v.
\end{align}
Hence, using mass conservation, equation \eqref{eq:massConservationContraction},
\[
	\partial_t \int_{\IR^3\times\IR^3} \hspace{-1em} (1 + |v|^2) f \dd x \dd v \leq 
	 (|g|+\|u\|_{L^\infty(\IR^3)}) \int_{\IR^3\times\IR^3} \hspace{-1em}(1 + |v|^2) f \dd x \dd v.
\]
Therefore, Lemma \ref{lem:WellPosednessFluid} and  Gronwall's inequality imply
\begin{equation}
	\int_{\IR^3\times\IR^3} \hspace{-1em} (1 + |v|^2) f \dd x \dd v 
	\leq \int_{\IR^3\times\IR^3} \hspace{-1em} (1 + |v|^2) f_0 \dd v \dd x \exp (C(Q_1,E_1)L t).
\end{equation}
Thus, for any $E_1 > \int_{\IR^3\times\IR^3}  (1 + |v|^2) f_0 \dd v \dd x$, we can choose
$T$ small enough such that $	\int_{\IR^3\times\IR^3} (1 + |v|^2) f \dd x \dd v \leq E_1$
for all $t \leq T$.

Finally, we need to control the support of $f$. To do this, we follow the same argument as in the last part of the proof of
 Lemma \ref{lem:aPrioriWellPosedness} to get
 \[
 		Q(t) \leq  Q_0 +  (1+t)  \int_0^t C(L,E_1,Q_1) \dd s \leq  Q_0 +  (1+t) t C(L,E_1,Q_1).
 \]
 Again, for any $Q_1 > Q_0$, we can choose
$T$ small enough such that $	Q(t) \leq Q_1$
for all $t \leq T$.

Therefore, by the Banach fixed point theorem, we get local in time existence of solutions to \eqref{eq:Vlasov Stokes}.
Global existence follows directly from the a priori estimates in Lemma \ref{lem:aPrioriWellPosedness}, since these ensure that
all the relevant quantities for the fixed point argument do not blow up in finite time.

Since $f \in W^{1,\infty}((0,T)\times\IR^3\times\IR^3)$ with uniform compact support, higher regularity of $u$ follows from taking derivatives
in the Brinkman equations in \eqref{eq:Vlasov Stokes} and using regularity theory for Stokes equations similar as in the proof of
Lemma \ref{lem:WellPosednessFluid}.
\end{proof}
\section{Uniform estimates on $\bm{\rho_\lambda}$ and $\bm{u_\lambda}$}
In the following, we assume that $(f,u)$ is the solution to the Vlasov-Stokes equations \eqref{eq:Vlasov Stokes} 
for some $\lambda > 0$ and some compactly supported initial datum $f_0 \in W^{1,\infty}(\IR^3\times\IR^3)$.
In this section we want to derive a priori estimates for these solutions that do not depend on $\lambda$.
This is why we cannot use the a priori estimates derived in Lemma \ref{lem:aPrioriWellPosedness}.
However, the drawback of the estimates that we prove in this section is that they allow for blow-up in finite time.
This is also why they are not suitable in the proof of global well-posedness, that we showed in the previous section.
Later, we will use the limit equation in order to show that the estimates derived here allow for uniform estimates for arbitrary times.

Again, we denote by $C$ any constant, which only depends on $f_0$ and may change from line to line.

\subsection{Estimates for the fluid velocity}

%

In this subsection we show that the fluid velocity as well as the particle velocity is controlled by 
$\|\rho\|_{L^\infty(\IR^3)}$, uniformly in $\lambda$, which means that high velocities can only occur if particles concentrate in position space.
This also implies control on the particle  positions and velocities

The proof is based on the energy identity from Lemma \ref{lem:aPrioriWellPosedness}, equation \eqref{eq:kineticEnergyParticles}, and the 
subsequent estimate \eqref{eq:kineticEnergyParticlesEstimate}.
The idea is to estimate the sum of the quadratic terms in that expression, which have a negative sign, by $E(t)$ from below. 
The following Lemma, which is a general observation on weighted $L^2$-spaces, shows why such an estimate is true  if $\|\rho\|_{L^{3/2}(\IR^3)}$ is not too large.

Having shown this estimate, the quadratic terms in \eqref{eq:kineticEnergyParticlesEstimate} dominate the linear term, which has been estimated by $E(t)^{1/2}$. This leads to control of $E$ uniformly in $\lambda$.
\begin{lemma}
	\label{lem:frictionCoercive}	
	There exists a constant $c_0$, such that for all nonnegative $\sigma \in L^{3/2}(\IR^3)$,  $h \in L^2(\sigma)$ and $w \in H^1(\IR^3)$, 
	\[
		\| \nabla w \|_{L^2(\IR^3)}^2 + \| w - h \|_{L^2_\sigma(\IR^3)}^2 \geq c_0 \min \{ \|\sigma\|_{L^{3/2}(\IR^3)}^{-1}, 1\} \|h\|_{L^2_\sigma(\IR^3)}^2.
	\]
\end{lemma}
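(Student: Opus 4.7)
The plan is to decompose $h$ as $h = w + (h - w)$ and control the $L^2_\sigma$-norm of each term separately: $(h-w)$ is controlled directly by the second term on the left, and $w$ is controlled by Hölder's inequality together with the critical Sobolev embedding $\dot{H}^1(\IR^3) \hookrightarrow L^6(\IR^3)$.

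Concretely, the triangle inequality gives $\|h\|_{L^2_\sigma}^2 \leq 2\|w-h\|_{L^2_\sigma}^2 + 2\|w\|_{L^2_\sigma}^2$. For the second piece, Hölder's inequality with exponents $3$ and $3/2$ yields
\[
\|w\|_{L^2_\sigma}^2 = \int_{\IR^3} w^2 \sigma \dd x \leq \|w\|_{L^6(\IR^3)}^2 \|\sigma\|_{L^{3/2}(\IR^3)},
\]
and by the Sobolev embedding there is a universal constant $C_S$ such that $\|w\|_{L^6}^2 \leq C_S \|\nabla w\|_{L^2}^2$. Combining these estimates gives
\[
\|h\|_{L^2_\sigma}^2 \leq 2\|w-h\|_{L^2_\sigma}^2 + 2 C_S \|\sigma\|_{L^{3/2}} \|\nabla w\|_{L^2}^2.
\]

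To obtain the claimed inequality, I would distinguish two cases depending on the size of $\|\sigma\|_{L^{3/2}}$. If $\|\sigma\|_{L^{3/2}} \leq 1$, then the prefactor $\|\sigma\|_{L^{3/2}}$ in front of $\|\nabla w\|_{L^2}^2$ can be bounded by $1$, so the right-hand side of the displayed inequality is dominated by $2\max(1, C_S)(\|\nabla w\|_{L^2}^2 + \|w-h\|_{L^2_\sigma}^2)$, and since $\min\{\|\sigma\|_{L^{3/2}}^{-1}, 1\} = 1$ in this case we may take $c_0 = (2\max(1, C_S))^{-1}$. If instead $\|\sigma\|_{L^{3/2}} > 1$, I divide the displayed inequality by $\|\sigma\|_{L^{3/2}}$ and use $\|\sigma\|_{L^{3/2}}^{-1} < 1$ to absorb the resulting factor in front of $\|w-h\|_{L^2_\sigma}^2$, which again yields the same bound with $\min\{\|\sigma\|_{L^{3/2}}^{-1}, 1\} = \|\sigma\|_{L^{3/2}}^{-1}$.

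There is no substantial obstacle here: the statement is essentially a weighted Poincaré-type observation, and the only ingredients are Hölder's inequality, the critical Sobolev embedding, and a case split to handle the passage from large to small $\|\sigma\|_{L^{3/2}}$. The constant $c_0$ ends up being $(2\max(1, C_S))^{-1}$, where $C_S$ is the Sobolev constant for $\dot{H}^1(\IR^3) \hookrightarrow L^6(\IR^3)$.
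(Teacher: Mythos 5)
Your proof is correct, and the core mechanism is the same as the paper's: use Hölder and the critical Sobolev embedding to bound $\|w\|_{L^2_\sigma}^2 \leq C_S\|\sigma\|_{L^{3/2}}\|\nabla w\|_{L^2}^2$, then absorb this into the gradient term. Where you diverge is in how you organize the absorption. You expand $h = w + (h-w)$ via the crude triangle inequality $\|h\|_{L^2_\sigma}^2 \leq 2\|w-h\|_{L^2_\sigma}^2 + 2\|w\|_{L^2_\sigma}^2$ and then dispatch the two regimes $\|\sigma\|_{L^{3/2}}\lessgtr 1$ by hand. The paper instead uses a parameter-dependent Young inequality, $\|w-h\|_{L^2_\sigma}^2 \geq (1-\theta)\|w\|_{L^2_\sigma}^2 + (1-1/\theta)\|h\|_{L^2_\sigma}^2$, and chooses $\theta$ so that $1-\theta = -C_S^{-1}\|\sigma\|_{L^{3/2}}^{-1}$; the Sobolev bound then cancels the negative $\|w\|_{L^2_\sigma}^2$ term exactly against $\|\nabla w\|_{L^2}^2$, and the coefficient $(\theta-1)/\theta$ produces the $\min\{\|\sigma\|_{L^{3/2}}^{-1},1\}$ behavior without any case split. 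Your version is more elementary and just as rigorous; the paper's version is a bit slicker and computes the constant in one shot. Since only the existence of $c_0$ matters, both are perfectly acceptable.
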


\begin{proof}
We estimate using the critical Sobolev inequality
\begin{equation}
	\label{eq:frictionCoerciveSobolev}
	\| w \|^2_{L^2_\sigma(\IR^3)} \leq \|w\|_{L^6(\IR^3)}^2 \|\sigma\|_{L^{3/2}(\IR^3)} \leq C \| \nabla w \|_{L^2(\IR^3)}^2  \|\sigma\|_{L^{3/2}(\IR^3)}.
\end{equation}
We have for any $ \theta > 0 $ and any $a,b \in H$ for some Hilbert space $H$
	\[
		\|a - b\|^2 = \| a \|^2 + \|b\|^2 - 2(a,b) \geq (1 - \theta) \|a\|^2 + (1- \frac{1}{\theta}) \|b\|^2.
	\]
Applying this with $1 - \theta := - C^{-1} \|\sigma \|_{L^{3/2}(\IR^3)}^{-1}$, where $C$ is the constant from equation \eqref{eq:frictionCoerciveSobolev},
we find
\[
	\| \nabla w \|_{L^2(\IR^3)}^2 + \| w - h \|_{L^2_\sigma(\IR^3)}^2 \geq \frac{\theta - 1}{\theta} \|h\|_{L^2_\sigma(\IR^3)}^2.
\]
To conclude, we notice that
\[
	\frac{\theta - 1}{\theta} = \frac{C^{-1} \|\sigma \|_{L^{3/2}(\IR^3)}^{-1}}{1+C^{-1} \|\sigma \|_{L^{3/2}(\IR^3)}^{-1}}
	\geq c_0 \min \{ \|\sigma\|_{L^{3/2}(\IR^3)}^{-1}, 1\}. \qedhere
\]
\end{proof}

\begin{lemma}
	\label{lem:boundsU}
	There exists a constant $C$ that depends only on $f_0$  such that for all $\lambda >0$ and all $t > 0$, we have
	\begin{align}
		\label{eq:energyBound}
		E(t) & \leq C\sup_{s\leq t}\|\rho\|_{L^{\infty}(\IR^3)}^{\frac{2}{3}}, \\
		\label{eq:uInfty}
		\| u(t,\cdot)\|_{L^{\infty}(\IR^3)} &\leq C \sup_{s\leq t} \|\rho\|_{L^\infty(\IR^3)}, \\
		\label{eq:nablaU}
		\|\nabla u(t,\cdot)\|_{L^{\infty}(\IR^3)} &\leq C \sup_{s\leq t}  \|\rho\|_{L^\infty(\IR^3)}^{2}, \\
		\label{eq:VBarInfty}
		\| \bar{V}(t,\cdot)\|_{L^{\infty}(\IR^3)} &\leq C \sup_{s\leq t} \|\rho\|_{L^\infty(\IR^3)},
	\end{align}
	where $\bar{V}$ is the average particle velocity defined in \eqref{eq:defJVbar}.
	
	Moreover, for all $(x,v) \in \supp f_0$,
	\begin{align}
		\label{eq:boundOnV}
			|V(t,0,x,v)| &\leq C \sup_{s\leq t}  \|\rho\|_{L^\infty(\IR^3)}, \\
			|X(t,0,x,v)| &\leq C t  \sup_{s\leq t}  \|\rho\|_{L^\infty(\IR^3)}.
		\label{eq:boundOnX}
	\end{align}
\end{lemma}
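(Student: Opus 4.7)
The plan is to combine the energy identity \eqref{eq:kineticEnergyParticlesEstimate} with Lemma \ref{lem:frictionCoercive} in order to derive a differential inequality for $E(t)$ whose $\lambda$-dependence will be harmless. Writing $M(t) := \sup_{s \le t} \|\rho(s,\cdot)\|_{L^\infty(\IR^3)}$, the main step is the bound $E(t) \le C M(t)^{2/3}$; once this is in place, the remaining estimates \eqref{eq:uInfty}--\eqref{eq:boundOnX} will drop out of Stokes regularity (as in the proof of Lemma \ref{lem:WellPosednessFluid}) and a direct analysis of the characteristics \eqref{eq:characteristics}.

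For the energy estimate \eqref{eq:energyBound}, I apply Lemma \ref{lem:frictionCoercive} at each time $t$ with $\sigma = \rho(t,\cdot)$, $w = u(t,\cdot)$, and $h = \bar V(t,\cdot)$. Using the variance identity \eqref{eq:Variance} to rewrite $\int_{\IR^3 \times \IR^3} (v-\bar V)^2 f \dd x \dd v = E - \|\bar V\|_{L^2_\rho}^2$ together with the trivial bound $\|\bar V\|_{L^2_\rho}^2 \le E$, the three quadratic terms on the right-hand side of \eqref{eq:kineticEnergyParticlesEstimate} combine to at least $c_0 \min\{\|\rho\|_{L^{3/2}}^{-1}, 1\} E$. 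By H\"older's inequality and mass conservation \eqref{eq:massConservation}, one has $\|\rho\|_{L^{3/2}} \le C \|\rho\|_{L^\infty}^{1/3}$, so \eqref{eq:kineticEnergyParticlesEstimate} reduces to
\begin{equation*}
\partial_t E(t) \le 2 \lambda \bigl( C E(t)^{1/2} - c\, M(t)^{-1/3} E(t) \bigr).
\end{equation*}
An ODE-barrier argument will then give $E(t) \le C M(t)^{2/3}$: whenever $E(t) > K M(t)^{2/3}$ for a sufficiently large $K$, the right-hand side above is strictly negative, while $M$ is nondecreasing in $t$; hence $E$ cannot cross the (nondecreasing) threshold from below. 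This is the step I expect to be the main obstacle, the subtlety being that the differential inequality involves the instantaneous $\|\rho(t)\|_{L^{3/2}}$ rather than the running supremum $M(t)$; the two are reconciled by the monotonicity of $M$ and the pointwise inequality $M(t)^{-1/3} \le C \|\rho(t)\|_{L^{3/2}}^{-1}$.

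The remaining bounds will follow almost mechanically. The identity \eqref{eq:energyEstimateFluid} combined with the critical Sobolev inequality gives $\|\nabla u\|_{L^2} \le E^{1/2} \le C M^{1/3}$ and $\|u\|_{L^6} \le C M^{1/3}$. H\"older then yields $\|\rho u\|_{L^2}^2 \le \|\rho\|_{L^\infty} \|u\|_{L^2_\rho}^2 \le C M^{5/3}$ and, similarly, $\|j\|_{L^2}^2 \le C M^{5/3}$; invoking the standard Stokes regularity estimate $\|\nabla^2 u\|_{L^q} \le C(\|\rho u\|_{L^q} + \|j\|_{L^q})$ at $q = 2$ and Sobolev embedding will produce \eqref{eq:uInfty}. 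To bound $\bar V$ and the characteristics, I use that every $v \in \supp f(t,x,\cdot)$ is of the form $V(t,0,x_0,v_0)$ for some $(x_0,v_0) \in \supp f_0$, and solving the ODE in \eqref{eq:characteristics} explicitly,
\begin{equation*}
V(t,0,x_0,v_0) = e^{-\lambda t} v_0 + \lambda \int_0^t e^{-\lambda(t-s)} \bigl( g + u(s, X(s,0,x_0,v_0)) \bigr) \dd s,
\end{equation*}
gives $|V(t,0,x_0,v_0)| \le |v_0| + |g| + \sup_{s \le t} \|u(s,\cdot)\|_{L^\infty} \le C M(t)$, which is \eqref{eq:boundOnV}; taking the supremum over $v$ in the support of $f$ yields \eqref{eq:VBarInfty}, and integrating $\partial_s X = V$ gives \eqref{eq:boundOnX}. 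Finally, for \eqref{eq:nablaU}, the now-established bounds $\|u\|_{L^\infty}, \|\bar V\|_{L^\infty} \le C M$ together with the interpolation $\|\rho\|_{L^q} \le C M^{1-1/q}$ (from mass conservation) imply $\|\rho u\|_{L^q} + \|j\|_{L^q} \le C M^{2-1/q}$ for every $q$; choosing any fixed $q > 3$ and invoking Stokes regularity together with Morrey embedding yields $\|\nabla u\|_{L^\infty} \le C M^2$.
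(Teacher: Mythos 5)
Your proof follows essentially the same route as the paper: it combines the energy inequality \eqref{eq:kineticEnergyParticlesEstimate} with Lemma \ref{lem:frictionCoercive} and mass conservation to close an ODE-type bound for $E$, then uses Stokes regularity plus Sobolev embedding for $u$ and $\nabla u$, and the explicit solution of the characteristic ODE for $V$, $X$, and $\bar V$. The barrier argument you outline is a standard reformulation of the ODE comparison principle the paper invokes (both handle the nonconstant coefficient by freezing it at its running extremum), and the small differences in exponents (e.g.\ $M^{5/6}$ versus $M$, or $M^{2-1/q}$ versus $M^{2}$) are harmless since, as the paper notes, constants may depend on $f_0$ and therefore lower powers of $\sup_{s\le t}\|\rho\|_{L^\infty}$ can be absorbed.
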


\begin{proof}
	By the energy estimate \eqref{eq:kineticEnergyParticlesEstimate}  from Lemma \ref{lem:aPrioriWellPosedness} and Lemma \ref{lem:frictionCoercive}, 
	we have for the energy of the particles
\begin{align}
	\partial_t E  & \leq  2 \lambda \left( C E^{\frac{1}{2}}	-  \int_{\IR^3 \times \IR^3} \hspace{-1em}  (v - \bar{V})^2 f \dd x \dd v - 
		\|u-\bar{V}\|_{L^2_\rho(\IR^3)}^2 - \|\nabla u \|_{L^2(\IR^3)}) \right) \\
	& \leq 2 \lambda \left( C  E^{\frac{1}{2}}
	-  \int_{\IR^3 \times \IR^3} \hspace{-1em}  (v - \bar{V})^2 f \dd x \dd v -
	 c_0 \min\{ \|\rho\|_{L^{3/2}(\IR^3)}^{-1},1\} \|\bar{V}\|_{L^2_\rho(\IR^3)}^2 \right)\\
	& \leq  2 \lambda  \left(C E^{\frac{1}{2}} -   c_0 \min\{ \|\rho\|_{L^{3/2}(\IR^3)}^{-1},1\} E \right).
\end{align}
A comparison principle for ODEs implies
\begin{equation}
	\label{eq:energyEstimateRho}
	E^{\frac{1}{2}}(t)  \leq E(0)^{\frac{1}{2}} e^{-2\lambda t} + \frac{C}{c_0} \sup_{s\leq t} \max \{\|\rho\|_{L^{3/2}(\IR^3)},1 \} 
	\leq C  \sup_{s\leq t}\|\rho\|_{L^{3/2}(\IR^3)} \leq C\sup_{s\leq t}\|\rho\|_{L^{\infty}(\IR^3)}^{\frac{1}{3}},
\end{equation}
where   we used that the $L^1$-norm of $\rho$ is constant in time by \eqref{eq:massConservation}. Note that here and in the following we also use that $C$ might depend on $f_0$ in order to get rid of lower order terms (using that if $f_0 = 0$, the solution $f$ is also trivial).
This proves \eqref{eq:energyBound}.

Recall from \eqref{eq:energyEstimateFluid} that $\|\bar V\|_{L^2_\rho(\IR^3)} \leq E^{\frac{1}{2}}$.
Thus, \eqref{eq:energyEstimateRho} yields
\begin{equation}
	\label{eq:barVRho}
	\|\bar V(t)\|_{L^2_\rho(\IR^3)} \leq E^{\frac{1}{2}}(t) \leq  C  \sup_{s\leq t}\|\rho\|_{L^{\infty}(\IR^3)}^{\frac{1}{3}}.
\end{equation}
Using regularity theory for the Stokes equations (see \cite{Ga11}) together with \eqref{eq:energyEstimateFluid} and \eqref{eq:barVRho} yields
\begin{align}
	\|\nabla^2 u \|_{L^2(\IR^3)} &\leq C\|\rho u \|_{L^2(\IR^3)} + C\|\rho \bar V\|_{L^2(\IR^3)} 
	\leq C\|u\|_{L^6(\IR^3)} \|\rho\|_{L^3(\IR^3)} + C\|\rho \bar V\|_{L^2(\IR^3)} \\
	 &\leq C \|\bar V\|_{L^2_\rho(\IR^3)} \|\rho\|_{L^{\infty}(\IR^3)}^{\frac{2}{3}} 
	 +  \|\bar V\|_{L^2_\rho(\IR^3)} \|\rho\|_{L^{\infty}(\IR^3)}^{\frac{1}{2}} \\
	 &\leq  C \|\bar V\|_{L^2_\rho(\IR^3)} \|\rho\|_{L^{\infty}(\IR^3)}^{\frac{2}{3}} \leq C \sup_{s\leq t} \|\rho\|_{L^\infty(\IR^3)}.
\end{align}
Sobolev inequality and \eqref{eq:energyEstimateFluid} yield
\[
	\|u\|_{L^\infty(\IR^3)} \leq \|u\|_{C^{0,\frac{1}{2}}(\IR^3)} \leq C \|u\|_{W^{1,6}(\IR^3)} 
	\leq C \|\nabla u \|_{W^{1,2}(\IR^3)} \leq C \sup_{s\leq t} \|\rho\|_{L^\infty(\IR^3)}.
\]
This proves \eqref{eq:uInfty}.

Using the characteristic equations \eqref{eq:characteristics}, we find for all $(x,v) \in \supp f_0$
\begin{equation}
	|V(t,0,x,v)| \leq e^{-\lambda t} |v| + |g| + \sup_{s\leq t} \|u(t)\|_{L^\infty(\IR^3)} 
	\leq  C \sup_{s\leq t} \|\rho\|_{L^\infty(\IR^3)},
\end{equation}
with a constant $C$ that depends on the support of $f_0$.
This proves \eqref{eq:boundOnV}. Moreover, using the equation for $X$, \eqref{eq:boundOnV} implies \eqref{eq:boundOnX}.

Furthermore, by \eqref{eq:boundOnV}
\[
	\|\bar{V}(t)\|_{L^\infty(\IR^3)} \leq C \sup_{s\leq t} \|\rho\|_{L^\infty(\IR^3)},
\]
which proves \eqref{eq:VBarInfty}.
This can be used again to derive a bound for $\nabla^2 u$ in $L^p(\IR^3)$ to get \eqref{eq:nablaU}.
More precisely, 
\begin{align}
	\|\nabla^2 u \|_{L^6(\IR^3)} &\leq \|u\|_{L^6(\IR^3)} \|\rho\|_{L^\infty(\IR^3)} + \|\rho \bar V\|_{L^6(\IR^3)} \\
	 &\leq C \|\bar V\|_{L^2_\rho(\IR^3)} \|\rho\|_{L^{\infty}(\IR^3)}  
	 + \|\bar V\|^{\frac{1}{3}}_{L^2_\rho(\IR^3)} \|\bar V\|_{L^\infty(\IR^3)}^{\frac{2}{3}} \|\rho\|_{L^\infty}^{\frac{5}{6}}
	 \leq C \sup_{s\leq t} \|\rho\|_{L^\infty(\IR^3)}^2.
\end{align}
Thus,
\[
	\|\nabla u\|_{L^\infty(\IR^3)} \leq C \sup_{s\leq t} \|\rho\|_{L^\infty(\IR^3)}^2. \qedhere
\]
\end{proof}

\subsection{Estimates for the particle density}
In this subsection we prove estimates on $\rho$ that are uniform in $\lambda$ for sufficiently $\lambda$ sufficiently large but depend on $u$.
Then, we will combine these estimates with the ones from Lemma \ref{lem:boundsU} in order to get estimates on $\rho$ independent of $\lambda$
and $u$ but only for small times.

We first prove a small lemma on estimates for ODEs that will be used several times analyzing the characteristics.
\begin{lemma}
	\label{lem:ODEEstimates}
	Let $T>0$ and $a,b:[0,T] \to \IR_+$ be Lipschitz continuous. Let $\alpha \colon [0,T] \to \IR_+$ be continuous and 
	$\lambda \geq 4 \max\{ 1,\|\alpha\|_{L^\infty(0,T)} \}$. 
	Let $\beta \geq 0$ be some constant and assume that on $(0,T)$
		\begin{align}
			|\dot{a}| &\leq b, \\
			\dot{b} &\leq \lambda(\alpha a - b) + \beta e^{-\lambda s}.
		\end{align}
	\begin{enumerate}[label=(\roman*)]
		
	\item	\label{it:ODEEstimates1}
		If $a(T) = 0$, then for all $s,t \in [0,T]$ with $s \leq t$
		\begin{align}
			a(t) &\leq \frac{2}{\lambda} b(t) + \frac{4}{\lambda^2} \beta e^{-\lambda t}, \label{eq:ODEEstimates1a} \\
			b(t) & \leq  \exp \left(\int_s^t -\lambda + 2 \alpha(\tau) \dd \tau \right) 
						\left( b(s) + \frac{2 \beta}{\lambda} e^{-\lambda s} \right). \label{eq:ODEEstimates1b}
		\end{align}

	\item  \label{it:ODEEstimates2}
	If $\beta = 0$ and $b(0) = 0$, then for all $t \in [0,T]$
		\[
			b(t) \leq  2 \|\alpha\|_{L^\infty(0,T)} a.
		\]
	\end{enumerate}
\end{lemma}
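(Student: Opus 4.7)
The plan is to prove both parts by a pointwise comparison argument: for each claim I will exhibit an auxiliary function built linearly from $a$, $b$, and the forcing $\beta e^{-\lambda s}$ which satisfies a one-sided first-order differential inequality, then transport the sign condition across $[0,T]$ using the appropriate boundary datum (the terminal condition $a(T)=0$ for (i) and the initial condition $b(0)=0$ for (ii)).

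For part (i)(a), set $\phi(s) := \lambda a(s) - 2 b(s) - \frac{4\beta}{\lambda} e^{-\lambda s}$. The bounds $\dot a \geq -b$ (from $|\dot a|\leq b$) and $\dot b \leq \lambda \alpha a - \lambda b + \beta e^{-\lambda s}$ yield
\[
\dot\phi \;\geq\; \lambda b - 2\lambda\alpha a + 2\beta e^{-\lambda s}.
\]
Substituting $\lambda a = \phi + 2 b + \frac{4\beta}{\lambda} e^{-\lambda s}$ into the $2\lambda\alpha a$ term and then using the standing hypothesis $\lambda \geq 4\|\alpha\|_{L^\infty}$ to discard the non-negative remainders $(\lambda - 4\alpha)b\geq 0$ and $(2 - \tfrac{8\alpha}{\lambda})\beta e^{-\lambda s}\geq 0$, I obtain $\dot\phi + 2\alpha\,\phi \geq 0$. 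Since $a(T)=0$ implies $\phi(T) = -2 b(T) - \tfrac{4\beta}{\lambda} e^{-\lambda T}\leq 0$, the function $\phi(s)\exp\!\bigl(2\int_0^s \alpha\bigr)$ is non-decreasing, hence $\phi(t)\leq 0$ for every $t\in[0,T]$, which is exactly the first inequality.

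For part (i)(b), I would plug the bound just proved back into the hypothesis on $\dot b$. Using $\lambda\alpha a \leq 2\alpha b + \tfrac{4\alpha\beta}{\lambda} e^{-\lambda s}$ and $\tfrac{4\alpha}{\lambda}\leq 1$ yields the closed linear inequality
\[
\dot b \;\leq\; (2\alpha - \lambda)\, b + 2\beta\, e^{-\lambda s}.
\]
Multiplying by the integrating factor $\exp\!\bigl(\int_0^s (\lambda - 2\alpha(r))\,dr\bigr)$ and integrating from $s$ to $t$ gives the stated Grönwall-type bound after estimating the resulting source integral, where the hypothesis $\lambda \geq 4\|\alpha\|_{L^\infty}$ again controls the ratio between the integrating factor at $s$ and at $\tau\in[s,t]$.

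For part (ii), the strategy mirrors (i)(a) but with the forward initial condition. Define $\psi(t) := b(t) - 2\|\alpha\|_{L^\infty}\, a(t)$; then $\psi(0) = -2\|\alpha\|_{L^\infty} a(0)\leq 0$. Using $\dot b \leq \lambda(\alpha a - b)$ (since $\beta=0$) together with $\dot a \geq -b$,
\[
\dot\psi \;\leq\; \lambda\alpha a - \bigl(\lambda - 2\|\alpha\|_{L^\infty}\bigr) b \;\leq\; \lambda\|\alpha\|_{L^\infty} a - \tfrac{\lambda}{2} b,
\]
where I used $\lambda - 2\|\alpha\|_{L^\infty}\geq \lambda/2$. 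Substituting $b = \psi + 2\|\alpha\|_{L^\infty} a$ gives $\dot\psi \leq -\tfrac{\lambda}{2}\psi$, and the inequality $\psi(0)\leq 0$ propagates forward to give $\psi(t)\leq 0$, i.e.\ $b(t)\leq 2\|\alpha\|_{L^\infty}a(t)$. The main obstacle in all three pieces is the \emph{calibration} of the coefficients in $\phi$ and $\psi$ (the factors $2$, $\tfrac{4}{\lambda}$, $2\|\alpha\|_{L^\infty}$): they are the unique choices for which, after using both differential inequalities, the cross terms vanish and the remainder has a sign compatible with the hypothesis $\lambda \geq 4\max\{1,\|\alpha\|_{L^\infty}\}$. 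Once these functions are identified, everything else is an integrating-factor computation.
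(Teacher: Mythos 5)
Your parts (i)(a) and (ii) are essentially the paper's argument with the auxiliary function written with opposite sign and a scaling: the paper works with $z = b - \tfrac{\lambda}{2}a + \tfrac{2\beta}{\lambda}e^{-\lambda s}$ and derives $\dot z \leq -\tfrac{\lambda}{2}z$; your $\phi$ equals $-2z$, and you derive the (slightly weaker but perfectly sufficient) inequality $\dot\phi + 2\alpha\phi \geq 0$. Both arguments hinge on the same two ideas — the coefficient $\lambda/2$ (resp.\ $2$) is calibrated so that the cross term drops out after using $\lambda \geq 4\|\alpha\|_{L^\infty}$, and then the one-sided sign at the appropriate endpoint propagates by an integrating factor. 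Part (ii) is likewise the paper's proof verbatim up to sign. The only genuine issue is in part (i)(b), where your sketch is too vague to actually produce the asserted bound.

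For (i)(b) you correctly close the system to $\dot b \leq (2\alpha - \lambda)b + 2\beta e^{-\lambda s}$, but the remaining step is not the unproblematic bookkeeping you suggest. Variation of constants gives
\begin{equation}
b(t) \leq e^{-\int_s^t(\lambda - 2\alpha)}\,b(s) \;+\; \int_s^t e^{-\int_\tau^t(\lambda - 2\alpha)}\,2\beta e^{-\lambda\tau}\dd\tau ,
\end{equation}
and the integrand of the source term simplifies to $2\beta\,e^{-\lambda t}e^{2\int_\tau^t\alpha}$, whose integral over $[s,t]$ is at most $2\beta(t-s)\,e^{-\lambda t}e^{2\int_s^t\alpha}$, i.e.\ the source contribution is bounded by $e^{-\int_s^t(\lambda-2\alpha)}\cdot 2\beta(t-s)e^{-\lambda s}$. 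That factor is $(t-s)$, not $1/\lambda$. The hypothesis $\lambda \geq 4\|\alpha\|_{L^\infty}$ only tells you $\lambda - 2\alpha \in [\lambda/2,\lambda]$; it neither accelerates the decay of $e^{2\int_\tau^t\alpha}$ nor manufactures the extra $\lambda^{-1}$, so appealing to a ``ratio of integrating factors'' does not close the gap. In fact the stated inequality \eqref{eq:ODEEstimates1b} fails in general: take $\alpha\equiv 0$, $\beta=1$, $\lambda=4$, $b(s)=s e^{-\lambda s}$, $a(s)=\int_s^T \tau e^{-\lambda\tau}\dd\tau$; then all hypotheses hold with equality in the ODE, $b(0)=0$, but $b(t)=t e^{-\lambda t} > \tfrac{2}{\lambda}e^{-\lambda t}$ once $t>2/\lambda$. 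This is actually a (minor) error in the paper's own statement and proof — the paper also just invokes ``the comparison principle'' without noticing that the source integral produces $(t-s)$ rather than $\lambda^{-1}$ — and a corrected version, say $b(t)\leq e^{-\int_s^t(\lambda - 2\alpha)}\bigl(b(s) + 2\beta(t-s)e^{-\lambda s}\bigr)$, would serve in the later application where only exponential decay of $b$ at rate comparable to $\lambda$ is used. But as written, your sketch would not deliver the lemma as stated, and you should either prove the corrected bound or explain why the $(t-s)$ factor can be absorbed in the place where the lemma is later used.
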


\begin{proof}
	We define
	\[
		z(s) := b(s) - \frac{\lambda}{2} a(s) + \frac{2}{\lambda} \beta e^{-\lambda s}.
	\]
	Then, if $a(T) = 0$,
	\[
		z(T) = b(T) + \frac{2}{\lambda} \beta e^{-\lambda T} \geq 0,
	\]
	and
	\[
		\dot{z} \leq \lambda \left( \alpha a - \frac{b}{2} \right)  - \beta e^{- \lambda s} 
		= \lambda \left( \alpha a - \frac{\lambda}{4} a - \frac{z}{2} + \frac{\beta}{\lambda} e^{- \lambda s} \right)  + \beta e^{- \lambda s}
		\leq - \frac{\lambda}{2} z.
	\]
	Hence, (applying Gronwall's inequality to $-z(T-t)$) we find $z \geq 0$ in $[0,T]$. This proves \eqref{eq:ODEEstimates1a}.
	Moreover, \eqref{eq:ODEEstimates1a} implies
	\[
		\dot{b} \leq (2 \alpha - \lambda) b + \left( 1+ \frac{4}{\lambda} \right) \beta  e^{-\lambda s} 
		\leq  (2 \alpha - \lambda) b + 2 \beta  e^{-\lambda s}.
	\]
	Thus, using the comparison principle for ODEs yields \eqref{eq:ODEEstimates1b}.
	
	In order to prove \ref{it:ODEEstimates2}, we define $z := 2 \|\alpha\|_{L^\infty(0,T)} a - b$. Then, $b(0) = 0$ implies $z(0) \geq 0$.
	Using the equations for $a$ and $b$, one obtains $\dot{z} \geq - (\lambda/2) z$ as in the proof of part (i). This implies $z \geq 0$ in $[0,T]$,
	and the assertion follows.
\end{proof}

Using the previous Lemma, we are able to prove that the particle velocities concentrate in regions of size $e^{-\lambda t}$ with an error due to fluctuations of the fluid velocity. Based on this result and equation \eqref{eq:fByCharacteristics}, we also prove an estimate for $\rho$.

\begin{lemma}
	\label{lem:biLipschitzV}
	Let $T > 0$ and assume $\lambda \geq 4(1+ \|\nabla u\|_{L^\infty((0,T) \times \IR^3)})$. Then, for all $t < T$ and all $x \in \IR^3$, the map
	\[
		v \mapsto V(0,t,x,v)
	\]
	is bi-Lipschitz. In particular its inverse $W(t,x,w)$ is well defined, and
	\begin{equation}
		\label{eq:rhoByW}
		\rho(t,x) = \int_{\IR^3} e^{ 3 \lambda t} f_0(X(0,t,x,W(t,x,w)),w) \det \nabla_w W(t,x,w)  \dd w.
	\end{equation}
	Moreover, denoting
	\begin{equation}
		\label{eq:defM}
		M(t) := \exp \left(\int_0^t   2 \|\nabla u(s,\cdot) \|_{L^\infty(\IR^3)} \dd s \right),
	\end{equation}
	we have
	\begin{align}
		|\nabla_v V(0,t,x,v)| &\leq M(t) e^{\lambda t}, \label{eq:LipschitzV} \\
		|\nabla_w W(t,x,w)| &\leq M(t) e^{-\lambda t}, \label{eq:LipschitzW} \\
		0 \leq \det \nabla_w W(t,x,w) &\leq M(t)^3 e^{-3\lambda t}. \label{eq:JacobianW}
	\end{align}
	Furthermore,
	\begin{equation}
		\label{eq:estimateRho}
		\| \rho(t,\cdot)\|_{L^\infty(\IR^3)} \leq C_0 M(t)^3,
	\end{equation}
	where the constant depends only on $f_0$.
\end{lemma}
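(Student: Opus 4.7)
The plan is to analyze the $3 \times 3$ matrices $A(s) := \nabla_v X(s,t,x,v)$ and $B(s) := \nabla_v V(s,t,x,v)$. Differentiating the characteristic system \eqref{eq:characteristics} in $v$, these satisfy the linear matrix ODEs
\[
\partial_s A = B, \qquad \partial_s B = \lambda\bigl(\nabla u(s,X(s))\, A - B\bigr), \qquad A(t) = 0, \quad B(t) = I.
\]
To prove the four estimates in the lemma, I need an upper bound on $|B(0)|$, a lower bound on the smallest singular value of $B(0)$ (which gives both invertibility and the Lipschitz bound on $W$), and then a change of variables for $\rho$.

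The key idea is to apply Lemma~\ref{lem:ODEEstimates}\ref{it:ODEEstimates1} \emph{directionally}. For every unit vector $\xi \in \IR^3$, set $a_\xi(s) := |A(s)\xi|$ and $b_\xi(s) := |B(s)\xi|$. Then $|\dot a_\xi| \leq b_\xi$, and a Cauchy--Schwarz estimate on $\partial_s |B\xi|^2$ yields $\dot b_\xi \leq \lambda(\|\nabla u(s,\cdot)\|_\infty a_\xi - b_\xi)$. The hypothesis $\lambda \geq 4(1 + \|\nabla u\|_{L^\infty})$ is exactly the size condition the lemma asks for, with $\alpha(s) := \|\nabla u(s,\cdot)\|_\infty$, $\beta = 0$, and terminal condition $a_\xi(t) = 0$. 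The lemma then delivers two outputs. First, $a_\xi(s) \leq (2/\lambda) b_\xi(s)$ for $s \in [0,t]$; supping over $\xi$ gives $|A(s)| \leq (2/\lambda)|B(s)|$. Second, specializing \eqref{eq:ODEEstimates1b} to the pair $s = 0$, $t$,
\[
1 = b_\xi(t) \leq \exp\Bigl(\int_0^t \bigl(-\lambda + 2\|\nabla u(\tau,\cdot)\|_\infty\bigr)\,d\tau\Bigr)\, b_\xi(0) = M(t)\, e^{-\lambda t}\, b_\xi(0),
\]
so $|B(0)\xi| \geq e^{\lambda t}/M(t)$ for every unit $\xi$. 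Since this holds in every direction, the smallest singular value of $B(0)$ is at least $e^{\lambda t}/M(t)$, so $B(0)$ is invertible with $|B(0)^{-1}| \leq M(t) e^{-\lambda t}$, giving \eqref{eq:LipschitzW}. The same argument performed at arbitrary $s \in [0,t]$ yields the singular value bound $\sigma_{\min}(B(s)) \geq e^{\lambda(t-s)}/M(t) > 0$; combined with $\det B(t) = 1$ and continuity this forces $\det B(s) > 0$ throughout and gives \eqref{eq:JacobianW} through the product of the singular values.

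For the Lipschitz bound \eqref{eq:LipschitzV}, I substitute $|A(\tau)| \leq (2/\lambda)|B(\tau)|$ into the Duhamel formula
\[
B(s) = e^{\lambda(t-s)} I - \lambda \int_s^t e^{\lambda(\tau-s)}\,\nabla u(\tau, X(\tau))\, A(\tau)\,d\tau;
\]
the auxiliary function $\phi(s) := e^{-\lambda(t-s)}|B(s)|$ then satisfies $\phi(s) \leq 1 + 2 \int_s^t \|\nabla u(\tau,\cdot)\|_\infty \phi(\tau)\,d\tau$, and backward Gronwall gives $\phi \leq M(t)$, hence \eqref{eq:LipschitzV}.

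Finally, the map $v \mapsto V(0,t,x,v)$ has everywhere-invertible Jacobian, is globally Lipschitz by \eqref{eq:LipschitzV}, and is proper since the lower singular value bound integrates to $|V(0,t,x,v_1) - V(0,t,x,v_2)| \geq (e^{\lambda t}/M(t))|v_1 - v_2|$. Standard global-inverse (Hadamard) arguments then yield a $C^1$ diffeomorphism of $\IR^3$ onto itself with inverse $W$. Substituting $w = V(0,t,x,v)$ in the representation \eqref{eq:fByCharacteristics} for $f$ and using \eqref{eq:defRho} produces \eqref{eq:rhoByW}. For \eqref{eq:estimateRho}, the factor $e^{3\lambda t}\,\det \nabla_w W$ is pointwise bounded by $M(t)^3$, and $f_0$ restricts the integration to the bounded velocity projection of $\supp f_0$, contributing a constant depending only on $f_0$. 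The principal technical point, which dictated this strategy, is obtaining the lower singular-value bound without any smallness on $M(t)$; this is what makes the directional application of Lemma~\ref{lem:ODEEstimates} effective compared to a direct attempt to invert the perturbation $I - P$ in a decomposition $B(s) = e^{\lambda(t-s)}(I - P(s))$, which only works for $M(t)$ very close to $1$.
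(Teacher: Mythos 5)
Your proof is correct and rests on the same key tool as the paper (Lemma~\ref{lem:ODEEstimates}\ref{it:ODEEstimates1} applied to the characteristic system), but the route is genuinely different in a way worth flagging. The paper avoids the Jacobian matrices $A, B$ entirely: it fixes $v_1, v_2$ and applies Lemma~\ref{lem:ODEEstimates} to the \emph{finite differences}
\[
a(s) := |X(s,t,x,v_1) - X(s,t,x,v_2)|, \qquad b(s) := |V(s,t,x,v_1) - V(s,t,x,v_2)|,
\]
which satisfy the same differential inequalities (by the same Cauchy--Schwarz computation you use for $b_\xi$). This delivers the two-sided global estimate
\[
M(t)^{-1} e^{\lambda t}\, |v_1 - v_2| \;\leq\; |V(0,t,x,v_1) - V(0,t,x,v_2)| \;\leq\; M(t)\, e^{\lambda t}\, |v_1 - v_2|
\]
in one stroke, from which bi-Lipschitzness, invertibility of $W$, and the Jacobian bounds \eqref{eq:LipschitzV}--\eqref{eq:JacobianW} all fall out directly, with no global inverse function theorem needed. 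Your version works with $A(s) = \nabla_v X$, $B(s) = \nabla_v V$ and obtains only \emph{pointwise} bounds $\sigma_{\min}(B(0)) \geq e^{\lambda t}/M(t)$, $|B(0)| \leq M(t)e^{\lambda t}$; to upgrade these to the global lower Lipschitz bound you correctly invoke a Hadamard--Caccioppoli type theorem, but your phrase that the pointwise singular-value bound ``integrates to'' the global estimate is loose --- that step is exactly the content of Hadamard, not a naive integration along a segment, since directions in $B(0)\eta$ can cancel. (For the upper bound you also substitute a Duhamel formula plus a backward Gronwall where the paper just reads off $\dot b \geq -(\lambda + 2\alpha)b$; both are fine.) Net comparison: your Jacobian formulation is more directly aligned with the form of \eqref{eq:LipschitzV}--\eqref{eq:JacobianW}, while the paper's finite-difference formulation is slightly more elementary and sidesteps the global-inverse import, relying only on the topological fact that an injective bi-Lipschitz self-map of $\IR^3$ is surjective. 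Your determinant-positivity argument via continuity from $\det B(t) = 1$, and the change-of-variables step giving \eqref{eq:rhoByW} and \eqref{eq:estimateRho}, both match the paper.
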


\begin{proof}
	We fix $t$, $x$, $v_1$, and $v_2$ and define
\begin{align}
	a(s) &= | X(s,t,x,v_1) - X(s,t,x,v_2) |,\\
	b(s) &= | V(s,t,x,v_1) - V(s,t,x,v_2)|.
\end{align}
Then, 
\begin{align}
	|\dot{a}| & \leq b, \qquad &&a(t) = 0, \\
	\dot{b} &\leq \lambda ( \|\nabla u(s,\cdot) \|_{L^\infty(\IR^3)} a - b), \qquad &&b(t) = |v_1 - v_2|.
\end{align}
Then, with $\alpha(s) :=  \|\nabla u(s,\cdot) \|_{L^\infty(\IR^3)}$ and $\beta = 0$, we can apply 
Lemma \ref{lem:ODEEstimates}\ref{it:ODEEstimates1} to deduce
\[
	b(t) \leq b(0) M(t) e^{-\lambda t},
\]
which implies
\begin{equation}
	\label{eq:LipschitzInverse}
	b(0) \geq M(t)^{-1} e^{\lambda t} |v_1 - v_2|.
\end{equation}

Note that the first inequality in  \eqref{eq:ODEEstimates1a} also implies 
\[	
	a(t) \leq \frac{2}{\lambda} b(t).
\]
Hence,
\[
	\dot{b} \geq\lambda ( - \|\nabla u(s,\cdot) \|_{L^\infty(\IR^3)} a - b) \geq \left( - \lambda   - 2\|\nabla u(s,\cdot) \|_{L^\infty(\IR^3)} \right) b.
\]
Thus
\begin{equation}
	\label{eq:Lipschitz}
	b(0) \leq e^{\lambda t} M(t) |v_1 - v_2|.
\end{equation}

Estimates \eqref{eq:LipschitzInverse} and \eqref{eq:Lipschitz} imply that the map $v \mapsto V(0,t,x,v)$ is bi-Lipschitz
and yield the bounds \eqref{eq:LipschitzV}, \eqref{eq:LipschitzW}, and \eqref{eq:JacobianW}. The Jacobian of $W$ is positive since $W(0,x,v) = w$ and 
the Jacobian is continuous in $t$, which follows from the definition of $V$ and regularity of $u$ proven in Proposition \ref{pro:WellPosedness}.

Moreover, recalling \eqref{eq:fByCharacteristics}, these estimates imply
\begin{align}
	\rho(t,x) = \int_{\IR^3} f(t,x,v) \dd v& = \int_{\IR^3} e^{ 3 \lambda t} f_0(X(0,t,x,v),V(0,t,x,v))  \dd v \\
	&= \int_{\IR^3} e^{ 3 \lambda t} f_0(X(0,t,x,W(t,x,w)),w) \det \nabla_w W(t,x,w)  \dd w \\
	&\leq \int_{\IR^3} M(t)^3 f_0(X(0,t,x,W(t,x,w)),w) \dd w \\
	& \leq C_0 M(t)^3,
\end{align}
which finishes the proof.
\end{proof}

%

We define 
\begin{equation}
	\label{eq:defT_ast}
	T_\ast := \sup \Big\{ t \geq 0 \colon \limsup_{\lambda \to \infty} \|\rho_\lambda\|_{L^\infty((0,t)\times\IR^3)} < \infty \Big\}
\end{equation}
In the lemma below, we prove that $T_\ast > 0$. Later we will show the convergence to the limit equation \eqref{eq:limitEquation}
first  only up to times $ T < T_\ast$ and finally, we will show $T_\ast = \infty$ using the convergence result for times $T < T_\ast$.

\begin{lemma}
	\label{lem:T_astPositive}
	Let $T_\ast$ be defined as in \eqref{eq:defT_ast}. Then,
	\[
		T_\ast >0.
	\]
\end{lemma}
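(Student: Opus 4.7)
The plan is to run a bootstrap argument that couples the two one-sided estimates proved above: Lemma \ref{lem:boundsU} bounds $\|\nabla u_\lambda\|_{L^\infty}$ by a power of $\sup_s\|\rho_\lambda\|_{L^\infty}$, while Lemma \ref{lem:biLipschitzV} bounds $\|\rho_\lambda(t,\cdot)\|_{L^\infty}$ by $C_0 M(t)^3$ provided that $\lambda$ is large compared to $\|\nabla u_\lambda\|_{L^\infty}$. Concatenating these two inequalities produces a closed differential-type inequality for $R_\lambda(t) := \sup_{s \leq t}\|\rho_\lambda(s,\cdot)\|_{L^\infty}$ of the form $R_\lambda(t) \leq C_0 \exp\bigl(C'\int_0^t R_\lambda(s)^2\,\dd s\bigr)$, which a priori admits a local-in-time bound independent of $\lambda$.

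Concretely, I would first note that since $f_0$ is compactly supported and bounded, $\|\rho_0\|_{L^\infty(\IR^3)} \leq \|f_0\|_{L^\infty}|B_{Q_0}(0)|$ is finite. I then set $R_0 := 2 \max\{C_0, \|\rho_0\|_{L^\infty(\IR^3)}\} + 1$, choose $\lambda_0 := 4(1 + C R_0^2)$ with $C$ the constant from \eqref{eq:nablaU}, and define $T_0 := \log(R_0/C_0)/(6 C R_0^2) > 0$. For $\lambda \geq \lambda_0$ I would introduce the maximal exit time
\[
T^{**}_\lambda := \sup\bigl\{t \in [0,T_0] \colon \|\rho_\lambda\|_{L^\infty((0,t) \times \IR^3)} \leq R_0\bigr\}.
\]
Continuity of $t \mapsto \|\rho_\lambda(t,\cdot)\|_{L^\infty}$, which follows from the $W^{1,\infty}$ regularity with compact support guaranteed by Proposition \ref{pro:WellPosedness} and Lemma \ref{lem:aPrioriWellPosedness}, together with the strict inequality $\|\rho_\lambda(0,\cdot)\|_{L^\infty} \leq \|\rho_0\|_{L^\infty} < R_0$, ensures that $T^{**}_\lambda > 0$.

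The remaining step is to show $T^{**}_\lambda = T_0$ for every $\lambda \geq \lambda_0$. Suppose for contradiction that $T^{**}_\lambda < T_0$. On $[0,T^{**}_\lambda]$ we have $\|\rho_\lambda\|_{L^\infty} \leq R_0$, so estimate \eqref{eq:nablaU} gives $\|\nabla u_\lambda(s,\cdot)\|_{L^\infty} \leq C R_0^2$, and hence the hypothesis $\lambda \geq 4(1 + \|\nabla u_\lambda\|_{L^\infty})$ of Lemma \ref{lem:biLipschitzV} is satisfied on this interval thanks to the choice of $\lambda_0$. Applying \eqref{eq:estimateRho}, I obtain
\[
\|\rho_\lambda(T^{**}_\lambda,\cdot)\|_{L^\infty} \leq C_0 M(T^{**}_\lambda)^3 \leq C_0 \exp\bigl(6 C R_0^2 T^{**}_\lambda\bigr) < C_0 \exp\bigl(6 C R_0^2 T_0\bigr) = R_0,
\]
so by continuity the bound $\|\rho_\lambda(t,\cdot)\|_{L^\infty} \leq R_0$ persists past $T^{**}_\lambda$, contradicting its maximality. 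Hence $T^{**}_\lambda = T_0$ for every $\lambda \geq \lambda_0$, i.e. $\|\rho_\lambda\|_{L^\infty((0,T_0) \times \IR^3)} \leq R_0$ uniformly for all large $\lambda$, which yields $T_\ast \geq T_0 > 0$.

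The main obstacle is purely bookkeeping: one must choose the threshold $R_0$ strictly above $C_0$ (and above $\|\rho_0\|_{L^\infty}$) and define $T_0$ so that the closed inequality gives strict improvement at the exit time, which is what allows the standard continuity trick to close the bootstrap.
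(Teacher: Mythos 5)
Your proof is correct and follows essentially the same route as the paper: couple \eqref{eq:nablaU} from Lemma \ref{lem:boundsU} with \eqref{eq:estimateRho} from Lemma \ref{lem:biLipschitzV} to obtain a closed Gr\"onwall-type inequality for $\sup_{s\le t}\|\rho_\lambda(s,\cdot)\|_{L^\infty}$, pick a threshold $R_0$ slightly larger than $C_0$ (which automatically dominates $\|\rho_0\|_{L^\infty}$ since $M(0)=1$), and close the bootstrap via a continuity/contradiction argument uniformly in $\lambda\ge\lambda_0$. The paper packages the same idea slightly differently (defining $T_\lambda$ and bounding it from below by $\log 2/(CC_0^2)$ rather than fixing $T_0$ in advance), but the two arguments are structurally identical.
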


\begin{proof}
By Lemma \ref{lem:boundsU}, we have for all $t > 0$
\[
	\|\nabla u_\lambda \|_{L^\infty((0,t) \times \IR^3)} \leq C \sup_{s \leq t} \|\rho_\lambda(s,\cdot)\|^2_{L^\infty(\IR^3)}.
\]
Moreover, by Lemma \ref{lem:biLipschitzV}, if $\lambda \geq 4 (\|\nabla u_\lambda \|_{L^\infty(0,t) \times \IR^3)} + 1)$,
then
\[
	\sup_{s \leq t} \|\rho_\lambda(s,\cdot)\|^2_{L^\infty(\IR^3)} 
	\leq C_0 \exp\left(2 \int_0^t \|\nabla u_\lambda (s,\cdot)\|_{L^\infty(\IR^3)} \dd s \right).
\]
Combining these two estimates, we see that $\lambda \geq  C \sup_{s \leq t} \|\rho_\lambda(s,\cdot)\|^2_{L^\infty(\IR^3)}$
implies
\begin{equation}
	\label{eq:growRho}
	\sup_{s \leq t} \|\rho_\lambda(s,\cdot)\|_{L^\infty(\IR^3)} 
	\leq C_0 \exp\left(C t \sup_{s \leq t} \|\rho_\lambda(s,\cdot)\|^2_{L^\infty(\IR^3)}\right).
\end{equation}
We define 
\[
	T_\lambda := \sup \{t \geq 0 \colon \sup_{s \leq t} \|\rho_\lambda(s,\cdot)\|_{L^\infty(\IR^3)} \leq 2 C_0\}.  
\]
Then, $T_\lambda > 0$ as $\rho_\lambda$ is continuous (and $C_0$ is chosen such that $\|\rho(0,\cdot)\|_{L^\infty(\IR^3)} \leq C_0$).
Moreover, \eqref{eq:growRho} implies for all $\lambda \geq 4 (C C_0^2+1)$ and all $t < T_\lambda$
\[
	\sup_{s \leq t} \|\rho_\lambda(s,\cdot)\|_{L^\infty(\IR^3)} \leq C_0 \exp( C C_0^2 t).
\]
As $\rho_\lambda$ is continuous, this yields for all $\lambda \geq  4 (C C_0^2+1)$
\[
	T_\lambda \geq \frac{\log(2)}{C C_0^2},
\]
which is independent of $\lambda$. Thus,
\[
	T_\ast \geq \inf_{\lambda \geq  4 (C C_0^2+1)} T_\lambda > 0. \qedhere
\]
\end{proof}

\subsection{Higher order estimates}

In this subsection, we prove estimates on $\partial_t \rho$ and $\nabla \rho$ which are uniform in $\lambda$ for times $T < T_\ast$.
On the one hand, this yields compactness of $\rho_\lambda$ in H\"older spaces. On the other hand, we will also need these
estimates in order to prove that the functions $\tilde{u}_\lambda$ defined in \eqref{eq:uTilde} are close to $u_\lambda$ for large values of $\lambda$.

From now on, any constant $C$ might depend on $T$ but not on $\lambda$. In particular, for $T<T_\ast$, $C$ might depend on 
$\limsup_{\lambda \to \infty} \|\rho_\lambda\|_{L^\infty((0,T)\times\IR^3)}$.

\begin{lemma}
	\label{lem:APriori}
	Let $T < T_\ast$. Then, there exist  $\lambda_0$ and $C$ depending on $T$ and $f_0$ such that
	for all $\lambda \geq \lambda_0$ and all multiindices $\beta \in \IN^3$,
	\begin{align}
		\|\rho\|_{W^{1,\infty}((0,T)\times\IR^3)} &\leq C, \label{eq:rhoLipschitz}\\
		\|u\|_{L^\infty((0,T_0);W^{2,\infty}(\IR^3))} &\leq C, \label{eq:nabla^2U} \\
		\|\bar V\|_{L^\infty((0,T_0)\times\IR^3)} &\leq C, \label{eq:VBarL^Infty}\\	
	\Big\| \nabla_x \int_{\IR^3} v^\beta f \dd v \Big\|_{L^\infty((0,T_0)\times\IR^3)} & \leq C. \label{eq:nablaRho}\\
	\end{align}
	Moreover, the support of $f$ is uniformly bounded in $\lambda$ up to time $T$.
\end{lemma}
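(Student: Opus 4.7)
The plan is to combine the assumption $T<T_\ast$, which for $\lambda \geq \lambda_0$ gives a uniform bound $\|\rho_\lambda\|_{L^\infty((0,T)\times\IR^3)} \leq C$, with the characteristic representation used in Lemma \ref{lem:biLipschitzV}, now lifted to first (and partly second) spatial derivatives. First, applying Lemma \ref{lem:boundsU} with this uniform bound on $\rho_\lambda$ immediately yields uniform bounds on $\|u_\lambda\|_{W^{1,\infty}(\IR^3)}$, $\|\bar V_\lambda\|_{L^\infty(\IR^3)}$, and on $|X(t,0,x,v)|$, $|V(t,0,x,v)|$ for $(x,v)\in \supp f_0$. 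This gives \eqref{eq:VBarL^Infty} and the uniform support bound; in particular $M(t)\leq C$ on $[0,T]$.

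The core step is the bound on $\nabla_x \int v^\beta f \dd v$. Starting from the representation
\[
	\int_{\IR^3} v^\beta f(t,x,v) \dd v = \int_{\IR^3} W(t,x,w)^\beta\, e^{3\lambda t}\, f_0\bigl(X(0,t,x,W(t,x,w)),w\bigr) \det \nabla_w W(t,x,w) \dd w,
\]
the integrand is supported for $w$ in the $v$-projection of $\supp f_0$, so $|W|^{|\beta|}$ is bounded, and the factor $e^{3\lambda t}$ is absorbed by $\det \nabla_w W \leq M(t)^3 e^{-3\lambda t}$. Differentiating in $x$ produces the new quantities $\nabla_x W$, $\nabla_x(X\circ W)$, and $\nabla_x \det \nabla_w W$. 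Writing $a_x(s) := |\nabla_x X(s,t,x,v)|$ and $b_x(s) := |\nabla_x V(s,t,x,v)|$, these satisfy
\[
|\partial_s a_x| \leq b_x, \qquad \partial_s b_x \leq \lambda\bigl(\|\nabla u(s,\cdot)\|_{L^\infty(\IR^3)} a_x - b_x\bigr),
\]
with terminal data $a_x(t) = 1$, $b_x(t) = 0$; Duhamel's formula plus Gronwall yield $a_x, b_x \leq C$ on $[0,t]$, uniformly in $\lambda$ and depending only on $\int_0^T \|\nabla u\|_{L^\infty}\dd s$. Via the implicit relation $\nabla_x W = -\nabla_w W \cdot \nabla_x V$ this gives $|\nabla_x W| \leq C M(t) e^{-\lambda t}$, and combined with Lemma \ref{lem:biLipschitzV} we get $|\nabla_x(X\circ W)| \leq C$. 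For $\nabla_x \det \nabla_w W$ one linearizes the variational system once more; the key observation is that the mixed second-order variation $\nabla_x \nabla_w$ has the same structure as $\nabla_v$, so the $e^{-3\lambda t}$ decay of the Jacobian survives under $\nabla_x$. Summing these contributions yields $|\nabla_x \int v^\beta f \dd v| \leq C M(t)^3 \leq C$, which proves \eqref{eq:nablaRho} and, via $\beta = 0$, the spatial part of \eqref{eq:rhoLipschitz}.

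For $\partial_t \rho$, integrating the Vlasov equation in $v$ yields the continuity equation $\partial_t \rho + \dv j = 0$; the bound on $\nabla j$ from the previous step (case $|\beta|=1$) then gives $\partial_t \rho \in L^\infty$, completing \eqref{eq:rhoLipschitz}. For \eqref{eq:nabla^2U}, I would use Stokes regularity applied to $-\Delta u + \nabla p = j - \rho u$: the right-hand side is compactly supported and in $W^{1,\infty}(\IR^3)$ by the preceding steps, and Schauder estimates (equivalently, the singular integral representation of $\nabla^2 u$ combined with H\"older continuity of the right-hand side) give $\|\nabla^2 u\|_{L^\infty(\IR^3)} \leq C$. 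The main technical obstacle is the bookkeeping in the second step: one must verify that each differentiation in $x$ preserves the cancellation between $e^{3\lambda t}$ and the $W$-Jacobian. This rests on the observation that, because the drift $\lambda(g+u-v)$ contains the linear term $-\lambda v$, every $v$-type variation contracts at rate $e^{-\lambda t}$, so the derivatives of the Jacobian retain the same exponential smallness as the Jacobian itself.
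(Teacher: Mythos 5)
Your opening steps are fine (using the $T_\ast$ definition plus Lemma~\ref{lem:boundsU} to get \eqref{eq:VBarL^Infty}, the uniform $W^{1,\infty}$ bound on $u$, and the uniform support bound), and your structural idea of differentiating the $W$-representation of $\rho$ and tracking how the $e^{3\lambda t}$ factor cancels the Jacobian is the right one. But the central step of your argument is wrong, and it is precisely the pitfall the paper flags in the remark following Lemma~\ref{lem:APriori}.

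You set $a_x(s) := |\nabla_x X(s,t,x,v)|$, $b_x(s) := |\nabla_x V(s,t,x,v)|$ at \emph{fixed $v$}, with terminal data $a_x(t)=1$, $b_x(t)=0$, and you claim Gronwall gives $a_x, b_x \leq C$ uniformly in $\lambda$. This is false. The differential inequality $\partial_s b_x \leq \lambda(\alpha a_x - b_x)$ with the terminal condition $b_x(t)=0$ gives no upper bound for $b_x(s)$ at earlier times $s<t$: the dissipative term $-\lambda b_x$ that helps going forward becomes an exponentially destabilizing term going backward. Indeed, writing the Duhamel formula backward, $\nabla_x V(s,t,x,v) = -\lambda\int_s^t e^{\lambda(\tau-s)} \nabla u(\tau, X(\tau))\,\nabla_x X(\tau)\,\dd\tau$, which grows like $\alpha\,e^{\lambda(t-s)}$, and consequently $\nabla_x X$ grows like $\alpha\lambda^{-1}e^{\lambda(t-s)}$. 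Both blow up as $\lambda\to\infty$. Your subsequent claim $|\nabla_x W|\leq C M(t) e^{-\lambda t}$ inherits the error: combining the correct bound $|\nabla_w W|\lesssim e^{-\lambda t}$ with the correct growth $|\nabla_x V|\sim e^{\lambda t}$ in the identity $\nabla_x W = -\nabla_w W\,\nabla_x V$ shows that $\nabla_x W$ is in fact only $O(1)$, as the paper proves, not $O(e^{-\lambda t})$.

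The device the paper uses to get around this is to work with the \emph{reparametrized} characteristics $Y(s,t,x,w):=X(s,t,x,W(t,x,w))$ and $U(s,t,x,w):=V(s,t,x,W(t,x,w))$, i.e.\ to differentiate in $x$ at fixed initial velocity $w$ rather than at fixed final velocity $v$. The variational quantities $\nabla_x Y$ and $\nabla_x U$ are \emph{not} the same as $\nabla_x X$ and $\nabla_x V$; they contain the extra term $\nabla_v(\cdot)\,\nabla_x W$, which provides exactly the cancellation you were hoping for. The crucial algebraic trick is that integrating the $U$-equation yields $Y(s) = x - \int_s^t (g+u(\tau,Y))\dd\tau + \lambda^{-1}(W - U(s))$, so that $\nabla_x Y$ satisfies an integral inequality in which the dangerous term appears with a prefactor $\lambda^{-1}$ (and is controlled via Lemma~\ref{lem:ODEEstimates}\ref{it:ODEEstimates2} by $|\nabla_x U|\leq 2\|\nabla u\|_\infty|\nabla_x Y|$), letting Gronwall close at the $O(1)$ level. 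A similar reparametrization and application of Lemma~\ref{lem:ODEEstimates}\ref{it:ODEEstimates1} with nonzero forcing $\beta$ gives $|\nabla_w\nabla_x W|\lesssim \lambda^{-1}e^{-\lambda t}\|\nabla^2 u\|_{L^\infty}$, and hence $|\partial_{x_i}\det\nabla_w W|\lesssim \lambda^{-1}e^{-3\lambda t}\|\nabla^2 u\|_{L^\infty}$. Finally, your last step also misses that the estimate for $\nabla_x\int v^\beta f\,\dd v$ still contains the term $\lambda^{-1}\|\nabla^2 u\|_{L^\infty}$; closing \eqref{eq:nabla^2U} requires the bootstrap $\|\nabla^2 u\|_{L^\infty}\leq C(1+\lambda^{-1}\|\nabla^2 u\|_{L^\infty})$, which is absorbed only for $\lambda$ large enough (this is one of the reasons $\lambda_0$ depends on $T$).
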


\begin{proof}
By definition of $T_\ast$, there is  some $\lambda_0$ such that for all $\lambda \geq \lambda_0$ 
\begin{align}
	\|\rho\|_{L^\infty((0,T)\times\IR^3)} &\leq C \label{eq:rhoInfty} .
\end{align}
Thus, Lemma \ref{lem:boundsU} yields \eqref{eq:VBarL^Infty} and
\begin{align}
		\|u\|_{L^\infty((0,T_0);W^{1,\infty}(\IR^3))} &\leq C. \label{eq:uInftyT_ast}
\end{align}
Using this, we have $M(t) \leq C$ for all $t \leq T$, where $M$ is the quantity from \eqref{eq:defM} in Lemma \ref{lem:biLipschitzV}.
Moreover, we can assume that $\lambda_0$ has been chosen sucht that for all $\lambda \geq \lambda_0$
\begin{equation}
	\label{eq:lambdaLarge}
	\lambda \geq 4 (1+\|\nabla u \|_{L^\infty((0,T)\times\IR^3)}).
\end{equation}
In the following, we only consider $\lambda \geq \lambda_0$.

By Lemma \ref{lem:biLipschitzV}, $V(0,t,x,v)$ is invertible with inverse $W(t,x,v)$, and  we define
\begin{equation}
	\label{eq:defY}
	\begin{aligned}
	Y(s,t,x,w)&:= X(s,t,x,W(t,x,w)), \\
	U(s,t,x,w) &:= V(s,t,x,W(t,x,w)).
	\end{aligned}
\end{equation}
Then,
\begin{equation}
	\begin{aligned}
	\partial_s Y &= U, \qquad &&Y(t,t,x,w) = x, \\
	\partial_s U &= \lambda (g + u(Y,s) - U), \qquad  &&U(0,t,x,w) = w, \qquad	U(t,t,x,w) = W(t,x,w).
	\end{aligned}
\end{equation}
Note that by \eqref{eq:rhoByW} 
\[
	\int_{\IR^3} f(t,x,v) \dd v = e^{3 \lambda t} \int_{\IR^3} f_0 (Y,w) \det \nabla_w W \dd w.
\]
We compute
\[
	\partial_{x_i} \det \nabla_w W = \tr ( \op{adj} \nabla_w W \nabla_w \partial_{x_i} W)
	 = \det \nabla_w W \tr ( (\nabla_w W)^{-1} \nabla_w \partial_{x_i} W).
\]
Thus, for any multiindex $\beta$,
\begin{equation}	
	\label{eq:derivativeThreeTermes}
\begin{aligned}
	 \partial_{x_i} \int_{\IR^3} v^\beta f \dd v 
	 &= e^{3 \lambda t} \int_{\IR^3} \partial_{x_i} (W^\beta)  f_0 (Y,w)  \det \nabla_w W \dd w \\
	 {} & + e^{3 \lambda t} \int_{\IR^3} W^\beta \nabla_x f_0 (Y,w) \cdot \partial_{x_i} Y  \det \nabla_w W \dd w \\
	 {} & + e^{3 \lambda t} \int_{\IR^3} W^\beta  f_0 (Y,w) \det \nabla_w W \tr ( (\nabla_w W)^{-1} \nabla_w \partial_{x_i} W) \dd w \\
	 & =: A_1 + A_2 + A_3.
\end{aligned}
\end{equation}

We notice that
\[
	W(t,x,w) = V(t,0,X(0,t,x,W(t,x,w)),V(0,t,x,W(t,x,w))) = V(t,0,Y(0,t,x,w),w).
\]
Hence, for all $(Y(0,t,x,w),w) \in \supp f_0$, estimate \eqref{eq:boundOnV}  implies
\begin{equation}
	\label{eq:boundOnW}
	|W(t,x,w)| \leq C.
\end{equation}

Integrating the equation for $U$ yields (analogously to \eqref{eq:integrateCharacteristics})
\[
	Y(s,t,x,w) = x - \int_s^t g + u(\tau,Y) \dd \tau + \lambda^{-1} ( U(t,t,x,w) - U(s,t,x,w)).  
\]
Therefore,
\begin{equation}
	\label{eq:nabla_xYGronwall}
	\nabla_x Y(s,t,x,w) = \op{Id} - \int_s^t \nabla u(\tau,Y) \nabla_x Y \dd \tau + \lambda^{-1} ( \nabla_xU(t,t,x,w) - \nabla_x U(s,t,x,w)).
\end{equation}

We claim that
\begin{equation}
	\label{eq:velocitiesControlledByPositions}
	|\nabla_x U(s,t,x,w)| \leq 2 \|\nabla u\|_{L^\infty((0,T_0) \times \IR^3)} | \nabla_x Y(s,t,x,w)|.
\end{equation}
Indeed, with 
\begin{align}
	a(s) &:= | \nabla_x Y(s,t,x,w)|,  \\
	b(s) &:=|\nabla_x \partial_s Y(s,t,x,w)|, \\
	\alpha(s) &:= \|\nabla u(s,\cdot)\|_{L^\infty(\IR^3)},
\end{align}
this follows from Lemma \ref{lem:ODEEstimates}\ref{it:ODEEstimates2} using \eqref{eq:lambdaLarge}.

We use estimate \eqref{eq:velocitiesControlledByPositions} in equation \eqref{eq:nabla_xYGronwall} to get
\[
	 a(s)  \leq 1 + \int_s^t \alpha(\tau) a(\tau) \dd \tau 
	+ \frac{2 \| \alpha \|_{L^\infty(0,T)}}{\lambda} (a(t) + a(s)).
\]
Since $a(t) = 0$ and equation \eqref{eq:lambdaLarge} implies $4 \| \alpha \|_{L^\infty(0,T)} \leq \lambda$, we have
\[
	a(s) \leq 2 + 2 \int_s^t \alpha(\tau) a(\tau) \dd \tau.
\]
Therefore, \eqref{eq:uInftyT_ast} implies for all $0 \leq s \leq t \leq T$
\begin{equation}
	\label{eq:nablaxY}
	| \nabla_x Y(s,t,x,w) | = a(s) \leq C.
\end{equation}
Moreover, by \eqref{eq:velocitiesControlledByPositions}, \eqref{eq:uInftyT_ast}, and \eqref{eq:nablaxY}
\begin{equation}
	\label{eq:nablaxW}
	| \nabla_x W(t,x,w) | = |\nabla_x U(t,t,x,w)| \leq C.
\end{equation}

We want to estimate $\nabla_x \det \nabla_w W$.
We compute
\begin{equation}
	\label{eq:derivDet}
	\partial_{x_i} \det \nabla_w W = \tr ( \op{adj} \nabla_w W \nabla_w \partial_{x_i} W)
	 = \det \nabla_w W \tr ( (\nabla_w W)^{-1} \nabla_w \partial_{x_i} W).
\end{equation}
By \eqref{eq:LipschitzV}, we have
\begin{equation}
	|(\nabla_w W(t,x,w)^{-1}| = |(\nabla_v V(0,t,x,W(0,t,x,w))| \leq C e^{\lambda t}.
\end{equation}
	Thus, using also \eqref{eq:JacobianW}, we find
\begin{equation}
	\label{eq:derivDetEst}
	|\partial_{x_i} \det \nabla_w W| \leq \det \nabla_w W  | (\nabla_w W)^{-1}| | \nabla_w \partial_{x_i} W|
	\leq C e^{-3 \lambda t} e^{\lambda t} | \nabla_w \partial_{x_i} W|.
\end{equation}
In order to estimate $| \nabla_w \partial_{x_i} W|$ we further analyze the characteristics $Y$  and $U$ defined in \eqref{eq:defY}.
Fix $t$, $x$, and $w$ and denote 
\begin{align}
	a(s) &:= |\nabla_w Y(s,t,x,w)|, \\
	b(s) &:= |\nabla_w U(s,t,x,w)|, \\
	\alpha(s) &:= \|\nabla u(s,\cdot) \|_{L^\infty(\IR^3)}.
\end{align}
Then, the assumptions of Lemma \ref{lem:ODEEstimates}\ref{it:ODEEstimates1} are satisfied with $\beta = 0$. 

Thus,
\[
	b(t) \leq  \exp \left(\int_0^t -\lambda + 2 \|\nabla u(s,\cdot) \|_{L^\infty(\IR^3)} \dd s \right),
\]
and
\begin{equation}
	\label{eq:nablawY}
	|\nabla_w Y(s,t,x,w)| =  a(s) \leq \frac{2}{\lambda} b(s) \leq \frac{C}{\lambda} e^{-\lambda s}.
\end{equation}

Next, we consider the second derivative.
We denote 
\begin{align}
	a(s) &:= |\nabla_w \nabla_x Y(s,t,x,w)|, \\
	b(s) &:= |\nabla_w \nabla_x U(s,t,x,w)|, \\
	\alpha(s) &:= \|\nabla u(s,\cdot) \|_{L^\infty(\IR^3)}, \\
	\beta & := 4 M(T)^3 \|\nabla^2 u \|_{L^\infty([0,T] \times \IR^3)},
\end{align}
with $M$ as in \eqref{eq:estimateRho}.
Then, using the estimates for  $|\nabla_x Y|$ and $| \nabla_w Y|$ from \eqref{eq:nablaxY} and \eqref{eq:nablawY} respectively,
\begin{align}
	\dot{a} &\geq -b, \\
	\dot{b} & \leq \lambda (\|\nabla^2 u\|_\infty |\nabla_x Y| | \nabla_w Y| + \|\nabla u\|_\infty a - b) 
	\leq  \lambda ( \alpha  a - b) + \beta e^{-\lambda s} .
\end{align}
Hence, the assumptions of Lemma \ref{lem:ODEEstimates}\ref{it:ODEEstimates1} are satisfied.
Since $b(0) = 0$, Lemma \ref{lem:ODEEstimates}\ref{it:ODEEstimates1} yields
\begin{equation}
	\label{eq:nabla2W}
	|\nabla_w \nabla_x W(0,t,x,w) | = b(t) \leq C \frac{2 \beta}{\lambda} e^{- \lambda t} 
	\leq  \frac{C}{\lambda} e^{-\lambda t} \|\nabla^2 u \|_{L^\infty((0,T) \times \IR^3)} .
\end{equation}

Inserting this in \eqref{eq:derivDetEst}, we find
\begin{equation}
	\label{eq:derivDetFinal}
	|\partial_{x_i} \det \nabla_w W|  \leq \frac{C}{\lambda}  e^{-3 \lambda t} \|\nabla^2 u \|_{L^\infty((0,T) \times \IR^3)}.
\end{equation}
We recall the definition of $A_1$, $A_2$, and $A_3$ from equation \eqref{eq:derivativeThreeTermes}. 
Using \eqref{eq:JacobianW} and \eqref{eq:nablaxW} yields
\[
	A_1 \leq C(\beta).
\]
Estimates \eqref{eq:JacobianW}, \eqref{eq:boundOnW}, and \eqref{eq:nablaxY} imply
\[
	A_2 \leq C(\beta).
\]
Finally,  \eqref{eq:boundOnW} and \eqref{eq:derivDetFinal} yield
\[
	A_3 \leq \frac{C(\beta)}{\lambda} e^{-3 \lambda t} \|\nabla^2 u \|_{L^\infty((0,T) \times \IR^3)}.
\]
Inserting these bounds on $A_i$ in \eqref{eq:derivativeThreeTermes} we have.
\begin{equation}
	\label{eq:nablaRhoAlmost}
	\Big\| \nabla_x \int_{\IR^3} v^\beta f(t,x,v) \dd v \Big\|_{L^\infty(\IR^3)} 
	\leq C(\beta) \left( 1+ \frac{1}{\lambda} \|\nabla^2 u \|_{L^\infty((0,T) \times \IR^3)} \right).
\end{equation}
Since the support of $f$ in $x$ is controlled by \eqref{eq:boundOnX}, we also have for any $1 \leq p \leq \infty$
\begin{equation}
	\label{eq:nablaRhoL^p}
	\Big\| \nabla_x \int_{\IR^3} v^\beta f(t,x,v) \dd v \Big\|_{L^p(\IR^3)} 
	\leq C(\beta) (1+T)\left( 1+ \frac{1}{\lambda} \|\nabla^2 u \|_{L^\infty((0,T) \times \IR^3)} \right).
\end{equation}

In order to control $\|\nabla^2 u \|_{L^\infty((0,T)\times\IR^3)}$, the Brinkman equations in \eqref{eq:Vlasov Stokes} and regularity theory for the Stokes equations yield
\begin{align}
	\| \nabla^3 u \|_{L^p(\IR^3)} & \leq \|\nabla (\rho(u-\bar{V}))\|_{L^p(\IR^3)} \\
	& \leq \|\rho \|_{L^p(\IR^3)} \|\nabla u \|_{L^\infty(\IR^3)} + \|\nabla \rho \|_{L^p(\IR^3)} \|u \|_{L^\infty(\IR^3)} 
	+ \|\nabla (\rho \bar{V}))\|_{L^p(\IR^3)}.
\end{align}
Note that both $\nabla \rho$ and $\nabla (\rho \bar{V})$ are of the form of the left hand side in \eqref{eq:nablaRhoL^p}.
Therefore, using also Sobolev embedding together with \eqref{eq:uInftyT_ast} and \eqref{eq:rhoInfty} yields
\[
	\| \nabla^2 u \|_{L^\infty(0,T;C^{2,\alpha})}  \leq C \left( 1+ \frac{1}{\lambda} \|\nabla^2 u \|_{L^\infty((0,T) \times \IR^3)} \right).
\]
This implies \eqref{eq:nabla^2U} for $\lambda$ sufficiently large.

Inserting \eqref{eq:nabla^2U} in \eqref{eq:nablaRhoAlmost} proves \eqref{eq:nablaRho}.
The missing estimate for the time-derivative in \eqref{eq:rhoLipschitz} follows from the Vlasov-Stokes equations \eqref{eq:Vlasov Stokes} and \eqref{eq:nablaRho}.
\end{proof}

\begin{remark}
	One might wonder, whether the complicated splitting in \eqref{eq:derivativeThreeTermes} is really needed.
	Indeed, we also have 
	\begin{align}
		\partial_{x_i} \int_{\IR^3} v^\beta f \dd v &= \partial_{x_i} e^{3 \lambda t} \int_{\IR^3} v^\beta f_0(X,V) \dd v \\
		&= e^{3 \lambda t} \int_{\IR^3} v^\beta \nabla_x f_0(X,V) \partial_{x_i} X + \nabla_v f_0(X,V) \partial_{x_i} V \dd v,
	\end{align}
	an expression that involves only two terms and in particular does not involve any second derivatives.
	However, it turns out, that both $\partial_{x_i} X$ and $\partial_{x_i} V$ blow up as $\lambda \to \infty$.
	Therefore, estimating both terms individually in the above expression cannot lead to the assertion.
\end{remark}
\section{Proof of the Convergence result}

\subsection{Error estimates for the particle and fluid velocities}

Recall the definition of $\tilde{u}_\lambda$ from \eqref{eq:uTilde}, which can be viewed as intermediate between $u_\lambda$ and $u_\ast$ 
defined by \eqref{eq:Vlasov Stokes} and \eqref{eq:limitEquation} respectively. As a first step to show smallness of $u_\lambda - u_\ast$ 
(and also $\rho_\lambda - \rho_\ast$), we will show smallness of $u_\lambda - \tilde{u}_\lambda$. 
Comparing the PDEs that $u_\lambda$ and $\tilde{u}_\lambda$ fulfill, we observe that we have to prove smallness of $\rho(\bar{V} -  u_\lambda - g)$.
This is almost what we do in the proof of the lemma below. Indeed, it turns out that it is more convenient to consider the
error term $\Phi = \bar{V} -  \tilde{u}_\lambda - g$ instead of $\bar{V} -  u_\lambda - g$ because we control the time derivative of $\tilde{u}$.
Then, we are able to prove smallness of $u_\lambda - \tilde{u}_\lambda$
using energy identities for $\Phi$ and $u_\lambda - \tilde{u}_\lambda$ analogous to \eqref{eq:energyFluid} and \eqref{eq:energyParticles}.

\begin{lemma}
	\label{lem:uTilde}
	Assume $T < T_\ast$ and let $\tilde{u}_\lambda$ be defined as in \eqref{eq:uTilde}. Then, there exist $\lambda_0$ such that for all $\lambda \geq \lambda_0$
	\begin{align}
	\label{eq:estTildeU}
	\|\tilde u \|_{W^{1,\infty}((0,T_0)\times\IR^3)} &\leq C, \\
	\label{eq:diffUTildeU}
	\|\tilde u(t,\cdot) - u(t,\cdot)\|^2_{W^{1,\infty}(\IR^3)} &\leq C \left( e^{-c \lambda t} + \frac{1}{\lambda} \right).
\end{align}
\end{lemma}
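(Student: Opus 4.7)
The first bound $\|\tilde u_\lambda\|_{W^{1,\infty}((0,T)\times\IR^3)}\le C$ is immediate from standard Stokes regularity applied to $-\Delta\tilde u_\lambda+\nabla p_\lambda=g\rho_\lambda$ and to its time derivative $-\Delta\partial_t\tilde u_\lambda+\nabla\partial_t p_\lambda=g\partial_t\rho_\lambda$, combined with the uniform bounds $\|\rho_\lambda\|_{W^{1,\infty}((0,T)\times\IR^3)}\le C$ and the compact-support control provided by Lemma~\ref{lem:APriori}. This is precisely why $\tilde u_\lambda$, rather than $u_\lambda$, is convenient as the intermediate: its time derivative is bounded uniformly in $\lambda$.

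For the convergence estimate the plan is to study the energy
\[
G(t) := \int_{\IR^3\times\IR^3}|\Psi|^2\,f\dd x\dd v,\qquad \Psi := v-\tilde u-g,
\]
for which $\int\Psi f\dd v=\rho\Phi$ with $\Phi:=\bar V-\tilde u-g$, so that $\|\Phi\|_{L^2_\rho}^2\le G$ by Cauchy--Schwarz. A direct computation using the Vlasov equation and integration by parts yields
\begin{equation*}
\partial_t G = -2\lambda\Bigl(G-\int \Psi\cdot w\,f\dd x\dd v\Bigr)+R,\qquad w:=u-\tilde u,
\end{equation*}
where $R:=-2\int\partial_t\tilde u\cdot\rho\Phi\dd x-2\int\Psi\cdot((v\cdot\nabla)\tilde u)\,f\dd x\dd v$ satisfies $|R|\le CG^{1/2}$ by the uniform bounds on $\tilde u,\partial_t\tilde u,\nabla\tilde u$ and the uniform $v$-support bound from Lemma~\ref{lem:APriori}. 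The main obstacle is that the naive cancellation $-2\lambda G+2\lambda\int\Psi\cdot w\,f$ a priori produces decay only on the velocity dispersion $\int(v-\bar V)^2 f$, not on the mean-field error $\|\Phi\|_{L^2_\rho}^2$. The key observation is the identity
\[
G-\int\Psi\cdot w\,f = \int(v-\bar V)^2 f + \bigl(\|\Phi\|_{L^2_\rho}^2-\|\nabla w\|_{L^2}^2-\|w\|_{L^2_\rho}^2\bigr),
\]
where the Stokes energy identity $\|\nabla w\|_{L^2}^2+\|w\|_{L^2_\rho}^2=\int\rho\Phi\cdot w$ (obtained by testing $-\Delta w+\nabla\pi+\rho w=\rho\Phi$ against $w$) makes the parenthesis nonnegative; Lemma~\ref{lem:frictionCoercive} applied with $\sigma=\rho$, $h=\Phi$, together with the uniform bound on $\|\rho\|_{L^{3/2}}$ for $T<T_\ast$, upgrades it to $\bigl(\cdots\bigr)\ge c\|\Phi\|_{L^2_\rho}^2$. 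Hence $G-\int\Psi\cdot w\,f\ge c'G$ for some $c'>0$ independent of $\lambda$, so $\partial_t G\le-2c'\lambda G+CG^{1/2}$; the ODE comparison for $H:=G^{1/2}$ yields $G(t)\le C(e^{-c\lambda t}+1/\lambda)$.

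It remains to promote the $L^2_\rho$-smallness $\|\Phi\|_{L^2_\rho}^2\le G\le C(e^{-c\lambda t}+1/\lambda)$ to a $W^{1,\infty}$-bound on $w$. Stokes $H^2$-regularity applied to $-\Delta w+\nabla\pi=\rho(\bar V-u-g)=\rho\Phi-\rho w$, combined with Sobolev embedding, gives $\|w\|_{L^\infty}\le C\|\Phi\|_{L^2_\rho}\le C(e^{-c\lambda t}+1/\lambda)^{1/2}$. A characteristic bootstrap then upgrades this to an analogous $L^\infty$-bound on $\Phi$: integrating $\partial_s V=\lambda(g+u(s,X)-V)$ with $\tilde u$ as reference, for every $(x_0,v_0)\in\supp f_0$ one has
\[
|V(t,0,x_0,v_0)-g-\tilde u(t,X(t,0,x_0,v_0))|\le C(e^{-\lambda t}+1/\lambda)+\lambda\int_0^t e^{-\lambda(t-s)}\|w(s,\cdot)\|_{L^\infty}\dd s,
\]
and substituting the $L^\infty$-bound on $w$ yields $\|\Phi\|_{L^\infty(\supp\rho)}\le C(e^{-c\lambda t}+1/\lambda)^{1/2}$. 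Interpolating with the $L^2_\rho$-bound gives $\|\rho\Phi\|_{L^p}\le C(e^{-c\lambda t}+1/\lambda)^{1/2}$ for any $p>3$, and similarly for $\|\rho w\|_{L^p}$; Stokes $W^{2,p}$-regularity combined with the embedding $W^{2,p}(\IR^3)\hookrightarrow W^{1,\infty}(\IR^3)$ then gives $\|w\|_{W^{1,\infty}}\le C(e^{-c\lambda t}+1/\lambda)^{1/2}$, and squaring yields \eqref{eq:diffUTildeU}.
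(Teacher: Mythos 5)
Your proof is correct, and it takes a genuinely different (though parallel) route to the paper's. The paper works directly with the macroscopic quantity $\|\Phi\|^2_{L^2_\rho}$: it computes $\partial_t(\rho\Phi)$ from the Vlasov equation, derives $\partial_t\tfrac12\|\Phi\|^2_{L^2_\rho}\leq -\lambda\|\nabla Z\|^2_{L^2}-\lambda\|Z-\Phi\|^2_{L^2_\rho}+C$ (with $Z=u-\tilde u$), and then applies Lemma~\ref{lem:frictionCoercive}, exactly as you do, to obtain $\leq -c\lambda\|\Phi\|^2_{L^2_\rho}+C$. You instead work with the kinetic quantity $G=\int|\Psi|^2 f$, which dominates $\|\Phi\|^2_{L^2_\rho}$ and also tracks the velocity dispersion $\int(v-\bar V)^2 f$; your identity $G-\int\Psi\cdot w\,f=\int(v-\bar V)^2 f+(\|\Phi\|^2_{L^2_\rho}-\|\nabla w\|^2_{L^2}-\|w\|^2_{L^2_\rho})$ cleanly exhibits the two mechanisms for dissipation (concentration in $v$ and fluid-particle friction) and, via the Stokes identity $\int\rho\Phi\cdot w=\|\nabla w\|^2_{L^2}+\|w\|^2_{L^2_\rho}$ plus Lemma~\ref{lem:frictionCoercive}, yields the coercivity $\geq c'G$. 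The advantage of the kinetic route is that the error $R$ picks up directly from the transport term and the $\partial_t\tilde u$ term, avoiding the $\partial_t\rho|\Phi|^2$ correction and the explicit moment $\int v\cdot\nabla_x f\,v\,dv$ that the paper has to handle; the price is that you carry the velocity dispersion along. For the bootstrap from $L^2_\rho$ to $L^\infty$, the paper runs a damped ODE for $\rho\Phi$ itself, $\partial_t(\rho\Phi)\leq -\lambda\rho\Phi+C(\lambda e^{-c\lambda t}+\sqrt\lambda)$, while you integrate along the characteristics of $V$ to show $|V-g-\tilde u(t,X)|\leq C(e^{-c\lambda t}+\lambda^{-1})^{1/2}$ on $\supp f$ and average over $v$; these are equivalent in power but yours is slightly more explicit about where the decay comes from. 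The final Stokes $W^{2,p}$-regularity step is the same in both.
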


\begin{proof}
Again, we consider only $\lambda > \lambda_0$ whith $\lambda_0$ as in Lemma \ref{lem:APriori}.
Then, Lemma \ref{lem:APriori} implies that we control the $L^\infty$-norms of $\rho$ and $\partial_t \rho$ and the support of $\rho$.
Thus, \eqref{eq:estTildeU} follows from regularity theory for the Stokes equations.

We define
\begin{align}
	\Phi &:= \bar{V} - \tilde{u} -  g, \\
	 Z &:= u - \tilde{u}.
\end{align}
Then, 
\begin{align}
	-\Delta Z + \nabla p + (Z-\Phi)\rho =  0, \qquad \dv Z =0.
\end{align}
Therefore
\begin{equation}
	\label{eq:energyZ}
	\| \nabla Z \|^2_{L^2(\IR^3)}  =  (Z,\Phi-Z)_{L^2_\rho(\IR^3)}.
\end{equation}
We compute
\begin{align}
	\partial_t (\rho \bar{V})  &= -  \int_{\IR^3} v \cdot \nabla_x f v \dd v + \lambda \rho(g + u -\bar{V}) 
	= - \int_{\IR^3} v \cdot \nabla_x f v \dd v + \lambda \rho(Z - \Phi),  \\
	\label{eq:del_tRhoPhi} \partial_t (\rho \Phi) &= \partial_t (\rho \bar{V})- \partial_t (\rho \tilde{u}) 
			=  \lambda \rho(Z - \Phi) - \int_{\IR^3} v \cdot \nabla_x f v \dd v -  \partial_t (\rho \tilde{u}).
\end{align}
Note that \eqref{eq:estTildeU} and the bound on $\bar{V}$ from Lemma \ref{lem:APriori} imply
 that $\Phi(t,\cdot)$ is uniformly bounded in $L^\infty(\IR^3)$ up to time $T$.
Thus, we use \eqref{eq:del_tRhoPhi}, \eqref{eq:energyZ}, and the estimates from Lemma \ref{lem:APriori}, \eqref{eq:estTildeU}, and
 Lemma \ref{lem:frictionCoercive} to obtain
\begin{align*}
	\partial_t \frac{1}{2} \| \Phi\|_{L^2(\rho)}^2 &= \int_{\IR^3} \partial_t (\rho \Phi) \cdot \Phi - \frac{1}{2} \partial_t \rho |\Phi|^2 \dd x \\
	&= \lambda \int_{\IR^3} \rho \Phi \cdot ( Z - \Phi) \dd x  -\int_{\IR^3 \times \IR^3} \hspace{-1em}  v \cdot \nabla_x f v \cdot \Phi \dd v \dd x \\ 
	&  - \int_{\IR^3}  \partial_t (\rho \tilde{u}) \cdot \Phi\dd x - \frac{1}{2} \int_{\IR^3}  \partial_t \rho |\Phi|^2 \dd x  \\
	&\leq - \lambda \|\nabla Z\|_{L^2(\IR^3)}^2 - \lambda \|Z - \Phi\|_{L^2_\rho(\IR^3)}^2 + C \\
	&\leq - c \lambda \|\Phi\|_{L^2_\rho(\IR^3)}^2 + C.
\end{align*}
Therefore, we have
\[
	\| \Phi\|_{L^2_\rho(\IR^3)}^2 \leq  C\left( e^{-c \lambda t} + \frac{1}{\lambda} \right).
\]

By the energy identity for the Brinkman equations \eqref{eq:energyZ}, it follows
\[
		\|\nabla Z\|^2_{L^2 (\IR^3)} + \| Z\|^2_{L^2_\rho(\IR^3)} \leq C\left( e^{-c \lambda t} + \frac{1}{\lambda} \right).
\]
Regularity theory for Stokes equations implies
\begin{align}
	\| \nabla^2 Z \|^2_{L^2(\IR^3)} \leq 2 \|\rho Z\|^2_{L^2(\IR^3)} + 2 \|\rho \Phi \|^2_{L^2(\IR^3)}  
	&\leq 2 \|\rho \|^2_{L^3(\IR^3)} \|Z\|^2_{L^6(\IR^3)} +  2 \|\Phi\|^2_{L^2_\rho(\IR^3)} \|\rho\|_{L^\infty(\IR^3)} \\
	&\leq C\left( e^{-c \lambda t} + \frac{1}{\lambda} \right).
\end{align}
Thus, using Sobolev embedding, 
\begin{equation}
	\label{eq:ZLinfty}
	\| Z \|_{L^\infty(\IR^3)}^2 \leq C\left( e^{-c \lambda t} + \frac{1}{\lambda} \right).
\end{equation}

Taking $\lambda_0 \geq 1$ and using again \eqref{eq:del_tRhoPhi} yields 
\[
	\partial_t (\rho\Phi) \leq - \rho \Phi + C( \lambda e^{-c \lambda t} + \sqrt{\lambda}).
\]
Thus,
\[
	\| \rho \Phi \|_{L^\infty(\IR^3)}^2 \leq C\left( e^{-c \lambda t} + \frac{1}{\lambda} \right),
\]
which again yields smallness of $Z$ in even better norms.
More precisely, for $p \geq 2$
\begin{align}
	\| \nabla^2 Z \|^2_{L^p(\IR^3)} \leq C \|\rho Z\|^2_{L^p(\IR^3)} + C \|\rho \Phi \|^2_{L^p(\IR^3)} 
	&\leq C \|\rho \|^2_{L^p(\IR^3)} \|Z\|^2_{L^\infty(\IR^3)} +   C \|\rho \Phi\|^2_{L^\infty(\IR^3)} \\
	&\leq C \left( e^{-c \lambda t} + \frac{1}{\lambda} \right).
\end{align}
In particular,
\[
	\| Z\|^2_{W^{1,\infty}} \leq C \left( e^{-c \lambda t} + \frac{1}{\lambda} \right).
\]
By definition of $Z$, this proves \eqref{eq:diffUTildeU}.
\end{proof}

\subsection{Convergence for times $\bm{T<T_\ast}$}

We want to prove $\rho_\lambda \to \rho_\ast$ as $\lambda \to \infty$, where $\rho_\ast$ is the solution to \eqref{eq:limitEquation}. 
By the a priori estimate  from Lemma \ref{lem:APriori}, 
we have that $\rho_\lambda$ is uniformly bounded in $W^{1,\infty}((0,T_0)\times \IR^3)$ for times $T_0 < T_\ast$ defined in \eqref{eq:defT_ast}.
Hence, we can extract strongly convergent subsequences in $C^{0,\alpha}((0,T_0)\times \IR^3)$ for all $\alpha < 1$.
It remains to prove that any limit of these subsequences is $\rho_\ast$. To this end we will show  that $\rho_\lambda$ converges to $\rho_\ast$
in a weaker sense by using again the characteristics.

We note that
\begin{equation}
	\label{eq:rho_astByX_ast}
	\rho_\ast (t,x) = \rho_0 (X_\ast(0,t,x)) = \int_{\IR^3} f_0(X_\ast(0,t,x),v) \dd v,
\end{equation}
where $X_\ast(s,t,x)$ is defined as the solution to
\begin{align}
	\partial_s X_\ast(s,t,x) &= g + u_\ast(s,X_\ast(s,t,x)), \\
	X_\ast(t,t,x) &= x.
\end{align}

We have seen in \eqref{eq:almostTransportedByU} that for large values of  $\lambda$, the particles are almost transported by $u_\lambda +g$.
Moreover, in Lemma \ref{lem:uTilde}, we have seen that the fluid velocity $u_\lambda$
is close to $\tilde u_\lambda$, which roughly speaking is the fluid velocity corresponding to the limit equation \eqref{eq:limitEquation}.

In order to compare $\rho_\lambda $ to $\rho_\ast$, we want to use the formula for $\rho_\lambda$ from Lemma \eqref{lem:biLipschitzV},
\begin{equation}
	\label{eq:rhoByW2}
	\rho_\lambda(t,x) = \int_{\IR^3} e^{ 3 \lambda t} f_0(X_\lambda(0,t,x,W_\lambda(t,x,w)),w) \det \nabla_w W_\lambda(t,x,w)  \dd w.
\end{equation}
Provided $X_\lambda(0,t,x,W_\lambda(t,x,w))$ is close to $X_\ast(0,t,x)$ independently of $w$, the right hand sides of
\eqref{eq:rho_astByX_ast} and \eqref{eq:rhoByW2} look very similar.
However, we lack information on the Jacobian $\det \nabla_w W_\lambda(t,x,w)$.
 We know that $e^{ 3 \lambda t}  \det \nabla_w W_\lambda(t,x,w)$
is uniformly bounded (for small times $t$ and large values of $\lambda$, cf. Lemma \ref{lem:biLipschitzV} and Lemma \ref{lem:APriori}),
but we do not know whether it tends to $1$ in the limit $\lambda \to \infty$.

To avoid dealing with this Jacobian, we also integrate over a small set in position space.
To this end, let $\Psi_\lambda(t,\xi) := (X_\lambda(t,0,\xi),V_\lambda(t,0,\xi))$ with $\xi = (x,v)$. 
Then, using the characteristic equations \eqref{eq:characteristics},
\[
	\partial_t \nabla \Psi_\lambda = \nabla \Psi_\lambda
\begin{pmatrix}
  0 & \mathrm{Id} \\
  \lambda \nabla u & -\lambda \mathrm{Id} 
 \end{pmatrix}
 .
\]
Hence,
\[
	\partial_t \det \nabla \Psi_\lambda =   \det \nabla \Psi_\lambda \tr \left( (\nabla \Psi_\lambda)^{-1} \nabla \Psi_\lambda
\begin{pmatrix}
  0 & \mathrm{Id} \\
  \lambda \nabla u & -\lambda \mathrm{Id} 
 \end{pmatrix} \right) = - 3 \lambda \det \nabla \Psi_\lambda.
\]
Thus,
\[
	\det \nabla \Psi_\lambda(t,\xi) = e^{- 3 \lambda t}.
\]
Therefore, for $\Omega \subset \IR^3$ measurable,
\begin{equation}
	\label{eq:intRho_lambda}
	\begin{aligned}
		\int_{\Omega} \rho_\lambda(t,y) \dd y = \int_{\Omega} \int_{\IR^3} e^{3 \lambda t} f_0(\Psi_\lambda^{-1}(y,v)) \dd v \dd y  = \int_{\Psi_\lambda^{-1}(\Omega \times \IR^3)} f_0(y,v) \dd v \dd y.
 	\end{aligned}
 \end{equation}
 
  On the other hand, since $u_\ast$ is divergence-free, we observe that
\begin{equation}
	\label{eq:intRho_ast}
	\int_{\Omega} \rho_\ast(t,y) \dd y = \int_{\Omega} \rho_0(X_\ast(0,t,y)) \dd y =  
	\int_{X(0,t,\Omega)} \rho_0(y) \dd y = \int_{X(0,t,\Omega) \times \IR^3} \hspace{-1em} f_0(y,v) \dd y \dd v .
\end{equation}
Now, we have to compare the right hand sides of \eqref{eq:intRho_ast} and \eqref{eq:intRho_lambda}.

It is convenient to choose $\Omega$ to be a cube.
We denote by $\mathcal{Q}_\delta$ the the set of all cubes $Q \subset \IR^3$ of length $\delta$. We define
\[
	d_{\lambda,\delta}(t) := \sup_{Q \in \mathcal{Q}_\delta} \left| \fint_Q \rho_\lambda(t,y) - \rho_\ast(t,y) \dd y \right|.
\]
We will show that 
\begin{equation}
	\label{eq:weakL^1Cubes}
	 \lim_{\lambda \to \infty} \lim_{\delta \to 0}  d_{\lambda,\delta}(t) = 0 \qquad \text{for  all} ~ t < T_\ast.
\end{equation}
This implies $\rho_\lambda(t,\cdot) \to \rho_\ast(t,\cdot)$ weakly-* in $L^\infty(\IR^3)$ because we already have uniform boundedness
by Lemma \ref{lem:APriori}.

For the proof of \eqref{eq:weakL^1Cubes} in Proposition \ref{pro:ConvergenceCube}, we essentially need three ingredients.
First, we will show in Lemma \ref{lem:d_lambda,delta} that $d_{\lambda,\delta}$ is controlled by
$|X_\lambda - X_\ast|$.
Second, we will show in Lemma \ref{lem:uTildeUAst} that $\tilde{u}_\lambda- u_\ast$ is controlled by $d_{\lambda,\delta}$. Finally, we use that the particle trajectories $X_\lambda$ are almost the ones, which one get 
from a transport velocity $\tilde{u}_\lambda + g$. This last ingredient is due to \eqref{eq:almostTransportedByU} and 
Lemma \ref{lem:uTilde}.

\begin{lemma}
	\label{lem:d_lambda,delta}
	Let $T_0 < T_\ast$. Then, there exist constants $C_1$  and $\lambda_0$ such that for all $\lambda > \lambda_0$ and all $t < T_0$
	\[
		d_{\lambda,\delta}(t) 
		\leq C \left(\sup_{(x,v) \in \supp f_0} |X_\lambda(t,0,x,v) - X_\ast(t,0,x)| + \delta + \frac{1}{\delta \lambda} \right).
	\]
\end{lemma}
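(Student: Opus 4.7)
The plan is to compare $\fint_Q \rho_\lambda$ and $\fint_Q \rho_\ast$ through their characteristic-based representations \eqref{eq:intRho_lambda} and \eqref{eq:intRho_ast}, routing through an intermediate $v$-independent flow. I would define $Z_\lambda(t,y)$ by $\partial_s Z_\lambda = u_\lambda(s,Z_\lambda) + g$, $Z_\lambda(0,y) = y$. Since $u_\lambda + g$ is divergence-free, $Z_\lambda(t,\cdot)$ is volume-preserving, so the transported density $\tilde\rho_\lambda(t,z) := \rho_0(Z_\lambda(0,t,z))$ serves as a bridge: the strategy is to show $\fint_Q\rho_\lambda \approx \fint_Q\tilde\rho_\lambda$ up to $O(1/(\delta\lambda))$, and $\tilde\rho_\lambda \approx \rho_\ast$ pointwise up to $O(\eta)$, with $\eta := \sup_{(y,v) \in \supp f_0}|X_\lambda(t,0,y,v) - X_\ast(t,0,y)|$.

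First, using \eqref{eq:almostTransportedByU} together with the $L^\infty$-bound on $V_\lambda$ from Lemma \ref{lem:boundsU}, the bound on $\|\nabla u_\lambda\|_{L^\infty}$ from Lemma \ref{lem:APriori}, and a Gronwall argument applied to $X_\lambda - Z_\lambda = \int_0^t(u_\lambda(s,X_\lambda) - u_\lambda(s,Z_\lambda))\dd s + O(1/\lambda)$, I would establish
\[
|X_\lambda(t,0,y,v) - Z_\lambda(t,y)| \leq C/\lambda \qquad \text{for all } (y,v) \in \supp f_0.
\]
In the identity $\int_Q\rho_\lambda \dd y = \int\!\!\int \mathbf{1}_Q(X_\lambda(t,0,y,v)) f_0(y,v) \dd y \dd v$ from \eqref{eq:intRho_lambda}, I would then swap $X_\lambda$ for the $v$-independent $Z_\lambda(t,y)$. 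The resulting error is supported on the set where $Z_\lambda(t,y)$ lies in the $C/\lambda$-neighborhood of $\partial Q$; by the volume-preserving change of variables $z = Z_\lambda(t,y)$ and the $L^\infty$-bound on $\rho_0$, this error is controlled by $C \delta^2/\lambda$, giving $O(1/(\delta\lambda))$ per unit volume of $Q$. After the substitution the remaining integral equals $\int_Q \tilde\rho_\lambda \dd z$ by volume preservation.

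Second, writing $\tilde\rho_\lambda(t,z) - \rho_\ast(t,z) = \rho_0(Z_\lambda(0,t,z)) - \rho_0(X_\ast(0,t,z))$ and using $\rho_0 \in W^{1,\infty}$, I need only bound $|Z_\lambda(0,t,z) - X_\ast(0,t,z)|$. The forward bound $|Z_\lambda(t,y) - X_\ast(t,0,y)| \leq \eta + C/\lambda$ on $\pi_x(\supp f_0)$ (which follows from Step 1 and the definition of $\eta$) is transferred to the backward flow via the Lipschitz regularity of $X_\ast(0,t,\cdot)$ and $Z_\lambda(0,t,\cdot)$, whose constants are uniformly controlled thanks to Lemma \ref{lem:APriori}, giving $|Z_\lambda(0,t,z) - X_\ast(0,t,z)| \leq C(\eta + 1/\lambda)$ whenever either back-trace meets $\supp \rho_0$. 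Combining both steps yields $|\fint_Q(\rho_\lambda - \rho_\ast)| \leq C(\eta + 1/\lambda + 1/(\delta\lambda))$, which implies the claim.

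The main obstacle is the backward-flow estimate: the forward bound $|Z_\lambda - X_\ast| \leq \eta + C/\lambda$ is only known on $\pi_x(\supp f_0)$, so for a general $z$ I must either argue that both $Z_\lambda(0,t,z)$ and $X_\ast(0,t,z)$ lie outside $\supp\rho_0$ (whence the pointwise difference vanishes), or use the Lipschitz inverse flows to transfer the forward estimate to this $z$. The extra $\delta$-term in the stated bound is naturally absorbed as an averaging remainder; it can be read off by replacing $\fint_Q\rho_\ast$ with the pointwise value $\rho_\ast$ at the cube centre using the Lipschitz regularity provided by Lemma \ref{lem:APriori}.
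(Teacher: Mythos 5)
Your proposal is correct and takes a genuinely different route from the paper, though both rest on the same two ingredients: the $O(1/\lambda)$ dependence of $X_\lambda(t,0,\cdot,v)$ on the initial velocity $v$ (from \eqref{eq:almostTransportedByU}), and the volume-preserving intermediate flow driven by $u_\lambda + g$. The paper proceeds set-theoretically: it defines $\Omega$ as the back-trace of $Q\times\IR^3$ intersected with $\supp f_0$, sandwiches it between preimages of an inner and an outer cube under the intermediate flow $\tilde X_\lambda$, and then compares $\int_\Omega\rho_0$ with $\int_{X_\ast(0,t,Q)}\rho_0$ via the two-set integral estimate \eqref{eq:integralSameFunctionDifferentSets}, whose $\sup\{|x-y|\colon x\in E, y\in F\}$ term contains $\op{diam}(X_\ast(0,t,Q))\sim\delta$, and this is where the $+\delta$ remainder in the statement comes from. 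You instead introduce the transported density $\tilde\rho_\lambda(t,z) = \rho_0(Z_\lambda(0,t,z))$ and establish the \emph{pointwise} bound $|\tilde\rho_\lambda(t,\cdot) - \rho_\ast(t,\cdot)|_{L^\infty} \leq C(\eta + 1/\lambda)$, averaging over $Q$ only at the very end. This is cleaner and in fact yields the strictly stronger estimate $d_{\lambda,\delta}(t) \leq C(\eta + 1/(\delta\lambda))$ without any $+\delta$ term; your final remark about reading off a $\delta$ contribution by replacing $\fint_Q\rho_\ast$ with a point value is therefore superfluous and slightly at odds with the bound you actually derived. The backward-flow obstacle you flag is real but is resolved exactly as you sketch: if $Z_\lambda(0,t,z)\in\supp\rho_0$, set $y:=Z_\lambda(0,t,z)$, note $z = Z_\lambda(t,0,y)$, apply the forward estimate $|Z_\lambda(t,0,y)-X_\ast(t,0,y)|\leq \eta + C/\lambda$, and use the Lipschitz bound \eqref{eq:nablaX_ast} on $X_\ast(0,t,\cdot)$ to get $|y - X_\ast(0,t,z)| = |X_\ast(0,t,X_\ast(t,0,y)) - X_\ast(0,t,z)| \leq C(\eta + 1/\lambda)$; the case $X_\ast(0,t,z)\in\supp\rho_0$ is symmetric, and if neither back-trace meets $\supp\rho_0$ the difference is zero.
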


\begin{proof}
	Let $Q \in \mathcal{Q}_\delta$.
	Let $\Psi_\lambda(t,y,v) := (X_\lambda(t,0,y,v),V_\lambda(t,0,y,v))$. Recall from \eqref{eq:intRho_ast} and \eqref{eq:intRho_lambda}
\begin{align}
	\label{eq:rho_astByf_0}
		\int_{Q} \rho_\ast(t,y) \dd y & =  \int_{X(0,t,Q)} \rho_0(y) \dd y, \\
		\label{eq:rho_lambdaByf_0}
		\int_{Q} \rho_\lambda(t,y) \dd y &= \int_{\Psi_\lambda^{-1}(Q \times \IR^3)} f_0(y,v) \dd y \dd v.
\end{align}
We want to replace the right hand side of \eqref{eq:rho_lambdaByf_0} by an integral of $\rho_0$ 
to compare its value to the right hand side of \eqref{eq:rho_astByf_0}. To this end,
we have to replace the set $\Psi_\lambda^{-1}(Q \times \IR^3)$ by a set of the form $\Omega \times \IR^3$.
We define
\begin{equation}
	\label{eq:defOmega}
\begin{aligned}
	\Omega &:= \{ X(0,t,z,w) \colon (z,w) \in \Psi(\supp f_0) \cap (Q \times \IR^3 ) \}  \\
						&= \{ y \in \IR^3 \colon \text{there is} ~ v \in \IR^3 ~ \text{with} ~ (y,v) \in \supp f_0, X_\lambda(t,0,y,v) \in Q \}.
\end{aligned}
\end{equation}
 	Then, we claim
\begin{equation}
	\label{eq:Inclusions}
	\Psi_\lambda^{-1}(Q \times \IR^3) \cap \supp f_0 \subset \left( \Omega \times \IR^3 \right) \cap \supp f_0
	\subset \Psi_\lambda^{-1}(Q_{C\lambda^{-1}} \times \IR^3),
\end{equation}
where $C$ is a constant independent of $\delta$ (and $\lambda$), and
\[
	Q_{C\lambda^{-1}} := \bigcup_{y \in Q} B_{C\lambda^{-1}}(y).
\]
The first inclusion in \eqref{eq:Inclusions} follows from the definition of $\Omega$. 
To prove the second inclusion,
let $(y,v) \in \supp f_0 \cap ( \Omega  \times \IR^3 )$. Then, by definition of
$\Omega$, there exists $\tilde{v} \in \IR^3$ such that $(y,\tilde{v}) \in \supp f_0$ and 
$X_\lambda(t,0,y,\tilde{v}) \in Q$.
From \eqref{eq:almostTransportedByU} and the fact that the support of $f_\lambda$ is uniformly bounded up to time $T_0$ by Lemma \ref{lem:APriori}, we know that
\begin{align}
	|X_\lambda(t,0,y,v) - X_\lambda(t,0,y,\tilde{v})| 
	&\leq \frac{C}{\lambda} + \int_0^t |u(s,X_\lambda(s,0,y,v)) - u(s,X_\lambda(s,0,y,\tilde{v}))| \dd s \\
								&\leq \frac{C}{\lambda} + \int_0^t \|\nabla u\|_{L^\infty} |X_\lambda(s,0,y,v) - X_\lambda(s,0,y,\tilde{v})|\dd s.
\end{align}
Using the estimate for $\nabla u$ from Lemma \ref{lem:APriori} yields
\begin{equation}
	\label{eq:smallDependenceOnInitialVelocity}
		|X_\lambda(t,0,y,v) - X_\lambda(t,0,y,\tilde{v})| \leq \frac{C}{\lambda} e^{C t}.
\end{equation}
Therefore,  $X_\lambda(t,0,y,v) \in Q_{C\lambda^{-1}}$ and thus
$(y,v) \in \Psi_\lambda^{-1}(Q_{C\lambda^{-1}} \times \IR^3)$.
From \eqref{eq:Inclusions} it follows
\begin{equation}
	\label{eq:cylinderRemaining}
 	\begin{aligned}
 	 	&\left| \int_{\Psi_\lambda^{-1}(Q \times \IR^3)} f_0(y,v) \dd y \dd v - 
 	 	\int_{\Omega \times \IR^3} f_0(y,v) \dd y \dd v \right| \\  
 	 	&\leq \int_{\Psi_\lambda^{-1}((Q_{C\lambda^{-1}} \backslash Q )\times \IR^3)} f_0(y,v) \dd y \dd v \\
 	 	& = \int_{Q_{C\lambda^{-1}} \backslash Q } \rho_\lambda (t,y) \dd y \\
 	 	& \leq \|\rho_\lambda(t,\cdot)\|_{L^\infty(\IR^3)} |Q_{C\lambda^{-1}} \backslash Q| \\
 	 	&\leq C \frac{\delta^2}{\lambda}.
 	\end{aligned}
\end{equation}
Combining \eqref{eq:rho_astByf_0}, \eqref{eq:rho_lambdaByf_0}, and \eqref{eq:cylinderRemaining} yields
\begin{equation}
	\label{eq:integralRho_0}
	\left| \int_{Q} \rho_\lambda(t,y) - \rho_\ast(t,y) \dd y \right| \leq 
	\left| \int_{X_\ast(0,t,Q)} \rho_0(y) \dd y - \int_{\Omega} \rho_0(y) \dd y \right| + C \frac{\delta^2}{\lambda}.
\end{equation}
To estimate the right hand side, we note that 
\begin{equation}
	\label{eq:X_astVolumePreserving}
	|X_\ast(0,t,Q)| = |Q| = \delta^3,
\end{equation}
 since $\dv u_\ast = 0$.	
We want to show that $|\Omega| \approx |Q|$.
To this end, we define $\tilde{X}_\lambda$ to be the solution to
\begin{align}
	\partial_s {\tilde{X}}_\lambda(s,t,x) &= u_\lambda(s,\tilde{X}_\lambda(s,t,x)), \\
	\tilde{X}_\lambda(t,t,x) &= x.
\end{align}
Then, using \eqref{eq:almostTransportedByU}, we have for all $(x,v) \in \supp f_0$
\begin{align}
	|\tilde{X}_\lambda(t,0,x) - X_\lambda(t,0,x,v)| 
	&\leq \frac{C}{\lambda} + \int_0^t |u_\lambda(s,\tilde{X}_\lambda(s,t,x)) - u_\lambda(s,X_\lambda(s,t,x,v))| \dd s \\
	&\leq \frac{C}{\lambda} + \int_0^t \|\nabla u_\lambda \|_{L^\infty(\IR^3)} | \tilde{X}_\lambda(s,t,x) - X_\lambda(s,t,x,v)| \dd s.
\end{align}
Gronwall implies
\[
	|\tilde{X}_\lambda(t,0,x) - X_\lambda(t,0,x,v)|  \leq \frac{C}{\lambda}.
\]
Thus, 
\[
	\tilde{X}^{-1}_\lambda(t,0,I_{C \lambda^{-1}}(Q)) \subset \Omega \subset \tilde{X}^{-1}_\lambda(t,0,Q_{C\lambda^{-1}}),
\]
where
\[
	I_{C \lambda^{-1}}(Q) := \{ y \in Q \colon B_{C \lambda^{-1}}(y) \subset Q \}.
\]
 Since $\dv u_\lambda = 0$, we have that $\tilde{X}_\lambda$ is volume preserving as well. Therefore, using also \eqref{eq:X_astVolumePreserving}
\begin{equation}
	\label{eq:VolumeDifference}
	||\Omega| - |X_\ast(0,t,Q)|| \leq | Q_{C\lambda^{-1}} \backslash I_{C \lambda^{-1}}(Q)|\leq  C \frac{\delta^2}{\lambda}.
\end{equation}

We observe that for any function $g \in W^{1,\infty}(\IR^3)$ and measurable sets $E,F \subset \IR^3$
\begin{equation}
	\label{eq:integralSameFunctionDifferentSets}
	\left|\int_E g  \dd x - \int_F g \dd x\right| \leq ||E| - |F|| \|g\|_{L^\infty} + \min\{|E|,|F|\} \|\nabla g\|_{L^\infty} \sup \{|x-y| \colon x \in E, y \in F\}.
\end{equation}
Indeed, using the first term on the right hand side, we may assume without loss of generality that $E$ and $F$ are of equal measure. 
Approximating $E$ and $F$ by equisized
cubes further reduces the situation to the estimate for two of these cubes. For these cubes, the statement obviously holds. 

Applying \eqref{eq:integralSameFunctionDifferentSets} together with \eqref{eq:X_astVolumePreserving} and \eqref{eq:VolumeDifference} yields
\begin{equation}
	\label{eq:estimateRho_0Integral}
\begin{aligned}
	&\left| \int_{X_\ast(0,t,Q)} \rho_0(y) \dd y - \int_{\Omega} \rho_0(y) \dd y \right| \\
	 &\leq ||\Omega| - |X_\ast(0,t,Q)||\|\rho_0\|_{L^\infty} 
	+ \delta^3 \sup \{|y-z| \colon y \in \Omega, z \in X_\ast(0,t,Q) \} \|\nabla \rho_0\|_{L^\infty} \\
	& \leq  C \frac{\delta^2}{\lambda} +  C \delta^3  \bigg( \sup_{y \in \Omega} \dist (y,X_\ast(0,t,Q)) 
	+ \op{diam} (X_\ast(0,t,Q)) \bigg).	 
\end{aligned}
\end{equation}
We need to estimate the second term on the right hand side.
To this end, recall the definition of the set $\Omega$ from \eqref{eq:defOmega}. 
For any $y \in \Omega$, we find $(x,v) \in \supp f_0$ such that $p:=X_\lambda(t,0,y,v) \in Q$.
Define $z = X_\ast(0,t,p) \in X_\ast(0,t,Q)$. Then
\begin{equation}
	\label{eq:estimateDist}
\begin{aligned}
	|z - y| &= |X_\ast(0,t,p) - X_\ast(0,t,X_\ast(t,0,y)| \\
	&\leq \|\nabla X_\ast (0,t,\cdot)\|_{L^\infty(\IR^3)} |X_\lambda(t,0,y,v) - X_\ast(t,0,y)| \\
	&\leq \|\nabla X_\ast (0,t,\cdot)\|_{L^\infty(\IR^3)} \sup_{(x,v) \in \supp f_0} |X_\lambda(t,0,x,v) - X_\ast(t,0,x)|.
\end{aligned}
\end{equation}
Observe that
\begin{equation}
	\label{eq:nablaX_ast}
	\|\nabla X_\ast (0,t,\cdot)\|_{L^\infty(\IR^3)} \leq e^{\int_0^t \|\nabla u_\ast(s,\cdot)\| \dd s} \leq C.
\end{equation}
Thus, \eqref{eq:estimateDist} and \eqref{eq:nablaX_ast} imply
\begin{equation}
	\label{eq:estDist}
\begin{aligned}
	\sup_{y \in \Omega} \dist (y,X_\ast(0,t,Q)) 
	&\leq C \sup_{(x,v) \in \supp f_0} |X_\lambda(t,0,x,v) - X_\ast(t,0,x)|.
\end{aligned}
\end{equation}
Note that \eqref{eq:nablaX_ast} also yields
\begin{equation}
	\label{eq:diam}
	\op{diam} (X_\ast(0,t,Q)) \leq \delta \|\nabla X_\ast (0,t,\cdot)\|_{L^\infty(\IR^3)} \leq C \delta.
\end{equation}

Finally, estimates \eqref{eq:estimateRho_0Integral}, \eqref{eq:diam}, and \eqref{eq:estDist} yield
\[
	\left| \int_{X_\ast(0,t,)} \rho_0(y) \dd y - \int_{\Omega} \rho_0(y) \dd y \right| 
	\leq C \frac{\delta^2}{\lambda} + C \delta^3\left(\sup_{(x,v) \in \supp f_0} |X_\lambda(t,0,x,v) - X_\ast(t,0,x)| + \delta\right).
\]
Combining this estimate with \eqref{eq:integralRho_0} finishes the proof.
\end{proof}

\begin{lemma}
	\label{lem:uTildeUAst}
	Let $T_0 < T_\ast$. 
	For $u_\ast$ and $\tilde{u}_\lambda$ as in \eqref{eq:limitEquation} and \eqref{eq:uTilde}, we have for all $\delta \leq 1$ and for all
	$t < T_0$
	\[
		\|\tilde{u}_\lambda(t,\cdot) - u_\ast(t,\cdot)\|_{L^\infty(\IR^3)} \leq C (d_{\lambda,\delta}(t) + \delta).
	\]
\end{lemma}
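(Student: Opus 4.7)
The difference $w := \tilde{u}_\lambda - u_\ast$ satisfies the homogeneous Stokes system
\[
	-\Delta w + \nabla q = g(\rho_\lambda - \rho_\ast), \qquad \dv w = 0,
\]
so that $w$ is given by convolution of $g(\rho_\lambda - \rho_\ast)$ with the Oseen tensor $\Phi$, which satisfies $|\Phi(z)| \leq C|z|^{-1}$ in $\IR^3$. Hence the plan is to estimate pointwise
\[
	|w(x)| \leq C \int_{\IR^3} \frac{|\rho_\lambda(t,y) - \rho_\ast(t,y)|}{|x-y|} \dd y
\]
using that both $\rho_\lambda$ and $\rho_\ast$ are uniformly bounded in $L^\infty$ and supported in a common bounded set (cf.\ Lemma \ref{lem:APriori} and well-posedness of \eqref{eq:limitEquation} in \cite{Hof16}).

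I would split the integral into a near-field $|x-y| \leq \delta$ and a far-field part. For the near field the $L^\infty$-bound on $\rho_\lambda - \rho_\ast$ combined with $\int_{B_\delta(x)} |x-y|^{-1} \dd y \leq C\delta^2$ immediately yields a contribution bounded by $C\delta^2 \leq C\delta$. For the far field, tile the common (bounded) support by disjoint cubes $Q \in \mathcal{Q}_\delta$ and on each such cube write, with $y_Q$ the center of $Q$,
\[
	\int_Q \frac{\rho_\lambda - \rho_\ast}{|x-y|} \dd y
	= \frac{1}{|x-y_Q|} \int_Q (\rho_\lambda - \rho_\ast) \dd y
	+ \int_Q \left( \frac{1}{|x-y|} - \frac{1}{|x-y_Q|} \right)(\rho_\lambda - \rho_\ast)(y) \dd y.
\]
The first term is controlled using the definition of $d_{\lambda,\delta}$ by $C\delta^3 d_{\lambda,\delta}/|x-y_Q|$, and the second by $C\delta^4/|x-y_Q|^2$ (using $|\nabla |x-y|^{-1}| \leq |x-y_Q|^{-2}$ on $Q$ combined with the $L^\infty$-bound on $\rho_\lambda - \rho_\ast$).

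It then remains to sum these bounds over the (bounded number of) cubes meeting the support. Grouping cubes by their distance $R = |x-y_Q|$ to $x$, the number of cubes in a shell of thickness $\delta$ is $\simeq R^2/\delta^2$, and summing over $R$ up to the support diameter gives the expected bounds $Cd_{\lambda,\delta}$ and $C\delta$ for the two terms respectively. Combining the near- and far-field estimates yields $\|w(t,\cdot)\|_{L^\infty(\IR^3)} \leq C(d_{\lambda,\delta}(t) + \delta)$ as claimed. The mildly delicate point is the shell counting, which crucially uses that the supports of $\rho_\lambda$ and $\rho_\ast$ are uniformly bounded on $[0,T_0]$ --- an input that is already guaranteed by Lemma \ref{lem:APriori} for $T_0 < T_\ast$.
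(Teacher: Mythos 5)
Your argument follows essentially the same route as the paper: represent $\tilde u_\lambda - u_\ast$ as the convolution of $\rho_\lambda - \rho_\ast$ with the Oseen tensor $\Phi$, tile the uniformly bounded support (from Lemma \ref{lem:APriori}) by $\delta$-cubes, bound the near cubes crudely via the $L^\infty$ bound, and on far cubes split into a constant-times-$\int_Q(\rho_\lambda-\rho_\ast)$ term controlled by $d_{\lambda,\delta}$ plus an oscillation term controlled by $|\nabla\Phi| \lesssim |z|^{-2}$, then sum. One small caveat: the preliminary reduction $|w(x)| \le C\int |\rho_\lambda - \rho_\ast|/|x-y|\,\dd y$ takes absolute values too early and would destroy the cancellation you need in the far field, so the cube splitting must be applied directly to the signed integrand $\Phi(x-y)(\rho_\lambda - \rho_\ast)(y)$, exactly as your displayed formula then does; with that understood, the proof goes through as the paper's does.
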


\begin{proof}
We choose  disjoint cubes $(Q_i)_{j \in \IN} \subset \mathcal{Q}_\delta$ that cover  $\IR^3$ up to a nullset. Define $I \subset \IN$ to be the index set for those cubes that
intersect with the support of either $\rho_\lambda(t,\cdot)$ or $\rho_\ast(t,\cdot)$ and let $(z_i)_{i \in I}$ be the centers of those cubes.
Let $x \in \IR^3$. Then,
\begin{align}
	|\tilde{u}_\lambda(t,x) - u_\ast(t,x)| &= \left| \int_{\IR^3} \Phi(x-y) (\rho_\lambda(t,y) - \rho_\ast(t,y) )\dd y \right| \\
		&\leq \sum_{j \in I} \left| \int_{Q_j} \Phi(x-y) (\rho_\lambda(t,y) - \rho_\ast(t,y) )\dd y \right|,
\end{align}
where $\Phi$ is the fundamental solution of the Stokes equations,
\[
	\Phi(y) = \frac{1}{8 \pi} \left(\frac{\mathrm{Id}}{|y|} + \frac{y \otimes y}{|y|^3} \right). 
\]
Let $I_1 \subset I$ be the index set of those cubes $Q_j$ which contain $x$ or are adjacent to that cube.
Then, $|I_1| \leq 27$ and for  $j \in I_1$ we estimate
\begin{equation}
\begin{aligned}
	 \left| \int_{Q_j} \Phi(x-y) (\rho_\lambda(t,y) - \rho_\ast(t,y) )\dd y \right| 
	 &\leq (\|\rho_\lambda(t,\cdot)\|_{L^\infty(\IR^3)} + \|\rho_\ast(t,\cdot)\|_{L^\infty(\IR^3)} \left| \int_{Q_j} \Phi(x-y) \dd y \right|\\
	 &\leq C \delta^2.
\end{aligned}
\end{equation}

Let $I_2 = I \backslash I_1$. For $h \in L^1(\IR^n)$ and $\Omega \subset \IR^n$ measurable, we use the notation 
\[
	(h)_{\Omega} := \fint_\Omega h \dd x := \frac{1}{|\Omega|} \int_\Omega h \dd x.
\] 
Then, for $j \in I_2$, 
\begin{equation}
\begin{aligned}
	 \left| \int_{Q_j} \Phi(x-y) (\rho_\lambda(t,y) - \rho_\ast(t,y) )\dd y \right| 
	 &\leq |(\Phi(x-\cdot))_{Q_j}| \left| \int_{Q_j} (\rho_\lambda(t,y) - \rho_\ast(t,y) )\dd y \right| \\ 
	 {} &+  \int_{Q_j} |\Phi(x-y) - (\Phi(x-\cdot))_{Q_j}| |\rho_\lambda(t,y) - \rho_\ast(t,y) |\dd y \\
	 & \leq \frac{\delta^3}{|x-z_j|} d_{\lambda,\delta}(t) + \frac{\delta^4}{|x-z_j|^2},
\end{aligned}
\end{equation}
where we used  that we control  $\rho_\lambda(t,\cdot)$ and $\rho_\ast(t,\cdot)$ in $L^\infty(\IR^3)$ by Lemma \ref{lem:APriori}.
Summing over all $j \in I$ yields
\begin{align}
	|\tilde{u}_\lambda(t,x) - u_\ast(t,x)| 
	&\leq C \delta^2 + \sum_{j \in I^2} \frac{\delta^3}{|x-z_j|} d_{\lambda,\delta}(t) + \frac{\delta^4}{|x-z_j|^2} \\
	&\leq C (\delta^2 + \delta + d_{\lambda,\delta}(t)),
\end{align}
where the constant $C$ depends on the spatial support of $\rho_\lambda$ and $\rho_\ast$ which we control uniformly up to time $T_0$ by Lemma \ref{lem:APriori}.
Using $\delta \leq 1$ finishes the proof.
\end{proof}

\begin{proposition}
	\label{pro:ConvergenceCube}
	Let $t < T_\ast$. Then
\[
	\lim_{\delta \to 0} \lim_{\lambda \to \infty} d_{\lambda,\delta}(t) = 0.
\]
\end{proposition}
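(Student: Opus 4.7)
The plan is to close a Gronwall loop combining Lemmas \ref{lem:d_lambda,delta}, \ref{lem:uTildeUAst}, and \ref{lem:uTilde}. By Lemma \ref{lem:d_lambda,delta} it suffices to estimate the quantity $D(t) := \sup_{(x,v)\in\supp f_0} |X_\lambda(t,0,x,v) - X_\ast(t,0,x)|$, since the remaining additive terms $\delta + 1/(\delta\lambda)$ already vanish in the chosen order of limits.

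To bound $D(t)$, I would start from \eqref{eq:almostTransportedByU} together with the uniform velocity bound from Lemma \ref{lem:APriori} to write
\[
	X_\lambda(t,0,x,v) = x + \int_0^t \bigl(g + u_\lambda(s,X_\lambda(s))\bigr) \dd s + O(1/\lambda),
\]
and compare with $X_\ast(t,0,x) = x + \int_0^t (g + u_\ast(s,X_\ast(s))) \dd s$. The key step is to split the integrand difference $u_\lambda(s,X_\lambda) - u_\ast(s,X_\ast)$ into three pieces: (i)~$u_\lambda - \tilde{u}_\lambda$, controlled in $L^\infty$ by $C(e^{-c\lambda s/2} + \lambda^{-1/2})$ thanks to Lemma \ref{lem:uTilde}; (ii)~$\tilde{u}_\lambda(s,X_\lambda) - u_\ast(s,X_\lambda)$, bounded by $C(d_{\lambda,\delta}(s) + \delta)$ via Lemma \ref{lem:uTildeUAst}; and (iii)~$u_\ast(s,X_\lambda) - u_\ast(s,X_\ast)$, bounded by $\|\nabla u_\ast\|_\infty |X_\lambda - X_\ast|$ using regularity of the limit equation. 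Since $\int_0^t (e^{-c\lambda s/2} + \lambda^{-1/2}) \dd s \leq C(1/\lambda + t/\sqrt\lambda)$, Gronwall's inequality applied to $|X_\lambda(s) - X_\ast(s)|$ yields
\[
	D(t) \leq C\left(\frac{1}{\sqrt\lambda} + \delta + \int_0^t d_{\lambda,\delta}(s) \dd s\right) e^{Ct}.
\]

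Substituting this bound into Lemma \ref{lem:d_lambda,delta} and applying Gronwall a second time produces
\[
	d_{\lambda,\delta}(t) \leq C\left(\frac{1}{\sqrt\lambda} + \delta + \frac{1}{\delta\lambda}\right) e^{Ct}.
\]
Sending $\lambda \to \infty$ with $\delta$ fixed eliminates both $\lambda$-dependent terms, leaving $\limsup_{\lambda\to\infty} d_{\lambda,\delta}(t) \leq C\delta\, e^{Ct}$, and then $\delta \to 0$ concludes.

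The main obstacle is the circular structure of the estimates: $d_{\lambda,\delta}$ itself enters the bound on $\tilde{u}_\lambda - u_\ast$ which is used to estimate the characteristic deviation, which in turn feeds back into Lemma \ref{lem:d_lambda,delta}. The loop closes cleanly only because Lemma \ref{lem:uTildeUAst} places $d_{\lambda,\delta}(s)$ inside a time integral, making the Gronwall step legitimate. A secondary subtlety is that $\|u_\lambda - \tilde{u}_\lambda\|_{L^\infty}$ behaves like $e^{-c\lambda t/2}$ near $t=0$, so one must integrate in time rather than take suprema; this is exactly the boundary-layer phenomenon already anticipated in the discussion after Theorem \ref{th:main}.
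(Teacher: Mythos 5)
Your proof is correct and follows essentially the same strategy as the paper's: the same three-way decomposition of $u_\lambda(s,X_\lambda) - u_\ast(s,X_\ast)$, the same appeal to Lemmas \ref{lem:uTilde}, \ref{lem:uTildeUAst}, and \ref{lem:d_lambda,delta}, and a Gronwall closure of the circular dependence between the characteristic error $|X_\lambda - X_\ast|$ and $d_{\lambda,\delta}$. Your bookkeeping is in fact slightly more careful than the paper's display, which appears to drop the $\|u_\lambda - \tilde u_\lambda\|_{L^\infty}$ contribution after the initial split; you correctly integrate it in time via \eqref{eq:diffUTildeU} to get $O(\lambda^{-1/2})$ rather than $O(\lambda^{-1})$, which still vanishes in the iterated limit so the conclusion is unaffected.
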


\begin{proof}
	We define
	\[
			\eta(t) := \sup_{(x,v) \in \supp f_0} |X_\lambda(t,0,x,v) - X_\ast(t,0,x)|.
	\]
	Let $(x,v) \in \supp f_0$. We write again $X_\lambda(t)$ instead of $X_\lambda(t,0,x,v)$ and similar for $X_\ast$.
	We estimate using first \eqref{eq:almostTransportedByU} together with the fact that the support of $f_\lambda$ remains uniformly
	bounded up to time $T_0$, and then applying Lemma \ref{lem:uTilde}, Lemma \ref{lem:uTildeUAst}, and Lemma \ref{lem:d_lambda,delta}
	\begin{align*}
		|X_\lambda(t)) - X_\ast(t)| & \leq \int_0^t |u_\lambda(s,X_\lambda(s)) - u_\ast(s,X_\ast(s))|  \dd s + \frac{C}{\lambda} \\
		& \leq \int_0^t |\tilde{u}_\lambda(s,X_\lambda(s)) - u_\ast(s,X_\ast(s))| + 
		 |\tilde{u}_\lambda(s,X_\lambda(s)) - u_\lambda(s,X_\ast(s))| \dd s + \frac{C}{\lambda} \\
		& \leq \int_0^t \|\tilde{u}_\lambda(s,\cdot) - u_\ast(s,\cdot)\|_{L^\infty(\IR^3)} + 
		\|\nabla u_\ast(s,\cdot)\|_{L^\infty(\IR^3)} |X_\lambda(s) - X_\ast(s)| \dd s + \frac{C}{\lambda} \\
		& \leq C \int_0^t d_{\lambda,\delta}(t) + \delta + |X_\lambda(s) - X_\ast(s)| \dd s  + \frac{C}{\lambda} \\
		& \leq C \int_0^t \eta(t) + \frac{1}{\delta \lambda} + \delta  \dd s + \frac{C}{\lambda}.
	\end{align*}
	Taking the supremum over $(x,v) \in \supp f_0$ yields for $\delta \leq 1$
	\begin{align}
		\eta(t) &\leq C \int_0^t \eta(s) \dd s +C \left( \frac{1}{\delta \lambda} + \delta\right).
	\end{align}
	Gronwall's inequality implies
	\[
		\eta(t) \leq C \left( \frac{1}{\delta \lambda} + \delta\right) e^{C t}.
	\]
	Lemma \ref{lem:d_lambda,delta} yields
	\[
		d_{\lambda,\delta}(t) \leq C \left( \frac{1}{\delta \lambda} + \delta\right) e^{C t}.
	\]
	Taking the limits $\lambda \to \infty$ followed by $\delta \to 0$ finishes the proof.
\end{proof}

Now, we have all the estimates needed to prove the statement of Theorem \ref{th:main} up to times $T < T_\ast$.

\begin{proposition}
	\label{pro:strongConvergence}
	Let $T < T_\ast$. Then, for all $\alpha < 1$,
	\begin{equation}
		\label{eq:ConvergenceRho}
		\rho_\lambda \to \rho_\ast \quad \text{in} ~ C^{0,\alpha}((0,T) \times \IR^3).
	\end{equation}
	Moreover, for all $0 < t < T$,
	\begin{equation}
		\label{eq:CovergenceU}
		u_\lambda \to u_\ast \quad  \text{in} ~ L^\infty((t,T) ; W^{1,\infty}(\IR^3)) ~  \text{and in} ~ L^1((0,T) ; W^{1,\infty}(\IR^3)).
	\end{equation}
\end{proposition}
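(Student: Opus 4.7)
The plan is to split the proof into the density convergence and the fluid velocity convergence, and for the latter to use the intermediate fluid velocity $\tilde{u}_\lambda$ as a bridge.

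\textbf{Step 1: Convergence of $\rho_\lambda$.} First I will fix some $T_0$ with $T < T_0 < T_\ast$ and apply Lemma~\ref{lem:APriori} on $(0,T_0)$. This gives a uniform bound on $\|\rho_\lambda\|_{W^{1,\infty}((0,T)\times\IR^3)}$ together with a uniform compact support in $x$ (from \eqref{eq:boundOnX} and the fact that the support of $f_\lambda$ is uniformly bounded up to time $T$). By Arzel\`a--Ascoli, $\{\rho_\lambda\}$ is precompact in $C^{0,\alpha}((0,T)\times\IR^3)$ for every $\alpha<1$. To identify the limit, note that any strong $C^{0,\alpha}$-limit is in particular a pointwise and (in view of the uniform compact support) an $L^1$-limit; by Proposition~\ref{pro:ConvergenceCube} the cube averages $\fint_Q\rho_\lambda(t,\cdot)$ converge to $\fint_Q\rho_\ast(t,\cdot)$ for every cube $Q$ and every $t<T_\ast$, so the only possible limit is $\rho_\ast$. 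Hence the full sequence converges, which proves \eqref{eq:ConvergenceRho}.

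\textbf{Step 2: Decomposition of $u_\lambda-u_\ast$.} I will write
\[
u_\lambda - u_\ast = (u_\lambda - \tilde{u}_\lambda) + (\tilde{u}_\lambda - u_\ast),
\]
where $\tilde{u}_\lambda$ is defined by \eqref{eq:uTilde}. The first difference is already controlled by Lemma~\ref{lem:uTilde}:
\[
\|u_\lambda(t,\cdot)-\tilde{u}_\lambda(t,\cdot)\|_{W^{1,\infty}(\IR^3)} \leq C\bigl(e^{-c\lambda t/2} + \lambda^{-1/2}\bigr).
\]
For any fixed $t_0>0$ the right-hand side converges to $0$ uniformly on $[t_0,T]$, and its integral on $(0,T)$ is $O(\lambda^{-1}) + O(\lambda^{-1/2})\to 0$, which supplies both the $L^\infty((t_0,T);W^{1,\infty})$ and $L^1((0,T);W^{1,\infty})$ pieces for this term.

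\textbf{Step 3: The intermediate difference $\tilde{u}_\lambda - u_\ast$.} Subtracting the defining Stokes systems \eqref{eq:uTilde} and \eqref{eq:limitEquation}, the difference solves $-\Delta(\tilde{u}_\lambda-u_\ast)+\nabla(p_\lambda-p_\ast) = g(\rho_\lambda-\rho_\ast)$ with $\dv(\tilde{u}_\lambda-u_\ast)=0$. Standard Stokes regularity (as used in Lemma~\ref{lem:WellPosednessFluid}) gives, for any $p\in(3,\infty)$,
\[
\|\tilde{u}_\lambda(t,\cdot)-u_\ast(t,\cdot)\|_{W^{2,p}(\IR^3)} \leq C\|\rho_\lambda(t,\cdot)-\rho_\ast(t,\cdot)\|_{L^p(\IR^3)},
\]
and the Sobolev embedding $W^{2,p}\hookrightarrow W^{1,\infty}$ turns this into control in $W^{1,\infty}$. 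Since both $\rho_\lambda$ and $\rho_\ast$ have uniformly compact support on $(0,T)$, the $L^p$-norm on the right is dominated by the $L^\infty$-norm of $\rho_\lambda-\rho_\ast$, which tends to zero uniformly in $t$ by Step~1. This gives $\tilde{u}_\lambda\to u_\ast$ in $L^\infty((0,T);W^{1,\infty}(\IR^3))$, finishing both claims in \eqref{eq:CovergenceU}.

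The only real subtlety is the $L^1((0,T);W^{1,\infty})$ statement, because of the boundary layer at $t=0$ for $u_\lambda-\tilde{u}_\lambda$; this is precisely what the integrable rate $e^{-c\lambda t/2}+\lambda^{-1/2}$ from Lemma~\ref{lem:uTilde} is designed to handle. Everything else reduces to a compactness argument in $C^{0,\alpha}$ plus elliptic regularity for Stokes, with the limit identification provided by Proposition~\ref{pro:ConvergenceCube}.
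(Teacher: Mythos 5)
Your proposal follows the paper's proof almost verbatim: compactness in $C^{0,\alpha}$ from the uniform $W^{1,\infty}$ bound in Lemma~\ref{lem:APriori}, identification of the limit through cube averages (Proposition~\ref{pro:ConvergenceCube}), and the decomposition $u_\lambda-u_\ast=(u_\lambda-\tilde u_\lambda)+(\tilde u_\lambda-u_\ast)$ treated via Lemma~\ref{lem:uTilde} and Stokes regularity.

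One small imprecision in Step 1: Proposition~\ref{pro:ConvergenceCube} does not say that $\fint_Q\rho_\lambda(t,\cdot)\to\fint_Q\rho_\ast(t,\cdot)$ for an arbitrary fixed cube $Q$; it states $\lim_{\delta\to0}\lim_{\lambda\to\infty}d_{\lambda,\delta}(t)=0$, i.e.\ the supremum of cube averages over cubes of side $\delta$ is small for large $\lambda$ only once $\delta$ is small. For a fixed cube, $\lim_{\lambda\to\infty}|\fint_Q(\rho_\lambda-\rho_\ast)|$ could a priori be of order $\delta$. The paper closes this gap by decomposing an arbitrary cube $Q$ into cubes of side $\delta<\delta_0(\eps)$ and summing; alternatively, one can observe that any $C^{0,\alpha}$-limit $\sigma$ is continuous and that $|\fint_{Q'}(\sigma-\rho_\ast)|<\eps$ for all cubes of small side, which by Lebesgue differentiation forces $\sigma=\rho_\ast$ pointwise. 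Either patch is immediate, and the rest of your argument (Steps 2 and 3, including the observation that the integrable rate from Lemma~\ref{lem:uTilde} handles the boundary layer in the $L^1$-convergence) coincides with the paper.
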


\begin{proof}
	By Lemma \ref{lem:APriori}, the sequence $\rho_\lambda$ is uniformly bounded in $W^{1,\infty}((0,T) \times \IR^3)$ for large enough $\lambda$.
	Therefore, for any $\alpha < 1$, $\rho_\lambda$  has a subsequence that converges in $C^{0,\alpha}((0,T) \times \IR^3)$ to some 
	function $\sigma$. We need to show $\sigma = \rho_\ast$.
	We claim that for all cubes $Q \subset \IR^3$ and all $t < T$,
	\begin{equation}
		\label{eq:ConvergenceCubes}
		\int_Q \rho_\lambda(t,x) \dd x \to \int_Q \rho_\ast(t,x) \dd x.
	\end{equation}
	Clearly, \eqref{eq:ConvergenceCubes} implies $\sigma = \rho_\ast$. In order to prove \eqref{eq:ConvergenceCubes}, let
	$\eps > 0$. Then, by Proposition \ref{pro:ConvergenceCube}, there exists $\delta_0 > 0$ such that for all $\delta < \delta_0$ 
	and all $x \in \IR^3$
	\begin{equation}
		\label{eq:smallerCubes}
	 \lim_{\lambda \to \infty} d_{\lambda,\delta} = \left| \fint_{Q_{\delta,x}} \rho_\lambda(t,x) - \rho_\ast(t,x) \dd x \right| < \eps.
	\end{equation}
	Up to a nullset, we can write $Q$ as the disjoint union of cubes $Q_i \in \cup_{\delta < \delta_0} \mathcal{Q}_\delta$.
	Thus, since $\eps$ is arbitrary, \eqref{eq:ConvergenceCubes} follows.
	
	In order to prove \eqref{eq:CovergenceU}, we notice that by Lemma \ref{lem:uTilde} it suffices to prove
	\begin{equation}
		\tilde{u}_\lambda \to u_\ast \quad  \text{in} ~ L^\infty((0,T) ; W^{1,\infty}(\IR^3)).
	\end{equation}
	However, by regularity theory of the Stokes equations
	\[
		\|\tilde{u}_\lambda -  u_\ast\|_{L^\infty((0,T) ; W^{1,\infty}(\IR^3))} 
		\leq C \| \rho_\lambda - \rho_\ast\|_{L^\infty((0,T)\times(\IR^3))},
	\]
	where we used that by Lemma \ref{lem:APriori} we have uniform control of the support of $\rho_\lambda$.
\end{proof}

\subsection{Convergence for arbitrary times}
In view of Proposition \ref{pro:strongConvergence}, it only remains to prove $T_\ast = \infty$ to finish the proof of Theorem \ref{th:main}.
The idea of the proof is the following.
Due to Lemma \ref{lem:biLipschitzV}, it is sufficient to control the quantity $M_\lambda(t)$ defined in \eqref{eq:defM} uniformly in $\lambda$.
Indeed, arguing similar as in Lemma \ref{lem:T_astPositive}, $\limsup_{\lambda \to \infty}M_\lambda(t)$ has to blow up at time $T_\ast$.
However, Proposition \ref{pro:strongConvergence} shows, that for large enough values of $\lambda$, $M_\lambda(t)$ is controlled by the corresponding quantity of the limit equation. This gives a contradiction.

\begin{proof}[Proof of Theorem \ref{th:main}]
	By Proposition \ref{pro:strongConvergence}, it suffices to prove $T_\ast = \infty$.
	By Lemma \ref{lem:T_astPositive}, we have $T_\ast >0$. Assume $T_\ast < \infty$ and let $T < T_\ast$.
	By definition of $T_\ast$ and Lemma \ref{lem:boundsU}, the assumption $\lambda \geq 4 \|\nabla u_\lambda \|_{L^\infty((0,T) \times \IR^3)}$ is
	satisfied for all $\lambda \geq \lambda_0(T)$. 
	Recall the definition of $M(T)$ from Lemma \ref{lem:biLipschitzV}, which we will now denote by $M_\lambda(T)$ to
	emphasize the dependence on $\lambda$. Moreover, we denote by $M_\ast$ the corresponding quantity for the solution of the limit problem,
	i.e.,
	\[
		M_\ast (t) := \exp \left(\int_0^t   2 \|\nabla u(s,\cdot) \|_{L^\infty(\IR^3)} \dd s \right).
	\]
	By Proposition \ref{pro:strongConvergence}, we have
	\[
		M_\lambda(T) \to M_\ast(T) \leq M_\ast(T_\ast).
	\]
	In particular, for all $\lambda \geq \lambda_0(T)$ (possibly enlarging $\lambda_0(T)$),
	\[
		M_\lambda(T) \leq 2 M_\ast(T_\ast).
	\]
	Therefore, 	Lemma \ref{lem:biLipschitzV} implies for all $\lambda \geq \lambda_0(T)$
	\[
		\sup_{s \leq t} \|\rho_\lambda(s,\cdot)\|^2_{L^\infty(\IR^3)} \leq 2 M_\ast(T_\ast).
	\]
	
	The rest of the proof is very similar to the proof of Lemma \ref{lem:T_astPositive}.
	We define 
\[
	T_\lambda := \sup \{t \geq 0 \colon \sup_{s \leq t} \|\rho_\lambda(s,\cdot)\|_{L^\infty(\IR^3)} \leq 2 C_0 (2 M_\ast(T_\ast))^3\}.  
\]
Then, $T_\lambda > T$ as $\rho_\lambda$ is continuous.
Analogously as we have shown \eqref{eq:growRho} in Lemma \ref{lem:T_astPositive}, we find that for all $t>0$ and $\lambda \geq  C \sup_{s \leq T + t} \|\rho_\lambda(s,\cdot)\|^2_{L^\infty(\IR^3)}$
\begin{equation}
	\label{eq:growRho2}
	\sup_{s \leq T + t} \|\rho_\lambda(s,\cdot)\|_{L^\infty(\IR^3)} 
	\leq C_0 (2M_\ast(T_\ast))^3 \exp \bigg(C t \sup_{s \leq T + t} \|\rho_\lambda(s,\cdot)\|^2_{L^\infty(\IR^3)}\bigg).
\end{equation}
This implies for all $\lambda \geq \max \{\lambda_0(T), C C_0^2 (M_\ast(T_\ast))^6\}$ and all $T + t < T_\lambda$
\[
	\sup_{s \leq T + t} \|\rho_\lambda(s,\cdot)\|_{L^\infty(\IR^3)} \leq C_0 (2M_\ast(T_\ast))^3 \exp( C C_0^2 (M_\ast(T_\ast))^6 t).
\]
As $\rho_\lambda$ is continuous, this yields for all $\lambda \geq \max \{ \lambda_0, C C_0^2 (M_\ast(T_\ast))^6$
\[
	T_\lambda \geq T + \frac{\log(2)}{C C_0^2 (M_\ast(T_\ast))^6}.
\]
In particular, choosing $T < T_\ast$ large enough, we deduce for all $\lambda \geq \max \{ \lambda_0, C C_0^2 (M_\ast(T_\ast))^6 \}$
\[
	T_\lambda > T_\ast,
\]
which gives a contradiction to the definition of $T_\ast$.
\end{proof}

\section*{Acknowledgement}

The author acknowledges support through the CRC 1060, the mathematics of emergent effects, of the University of Bonn,
that is funded through the German Science Foundation (DFG).


\printbibliography

\end{document}